\documentclass[psamsfonts,oneside,reqno]{amsart}
\usepackage[utf8]{inputenc}
\usepackage{amsfonts}
\usepackage{hyperref}
\hypersetup{
pdftitle={Small Ball Probabilities for Simple Random Tensors},
pdfauthor={Grigoris Paouris and Xuehan Hu}
}
\usepackage{amsmath}
\usepackage{xcolor}
\usepackage{amsthm}
\usepackage{pdflscape}
\usepackage{pgfplots}
\usepackage{mathrsfs}

\usepackage[utf8]{inputenc}

\newtheorem{theorem}{Theorem}[section]
\newtheorem{lemma}[theorem]{Lemma}

\newtheorem{proposition}[theorem]{Proposition}

\newtheorem{corollary}[theorem]{Corollary} 
\theoremstyle{definition}
\newtheorem{definition}[theorem]{Definition}

\theoremstyle{remark}
\newtheorem{remark}[theorem]{Remark}

\newcommand{\norm}[1]{\left\lVert#1\right\rVert} 
\newcommand{\dd }[1]{\mathrm{d}#1}

\bibliographystyle{plain}

\title{Small Ball Probabilities for Simple Random Tensors}

\author{Xuehan Hu$^1$ \quad\quad Grigoris Paouris$^2$ }
\thanks{1\quad Texas A\&M University. Email: huxuehan@tamu.edu. This work is partially supported by NSF grant CCF-1900881 and Simon's grant 964286.}
\thanks{2\quad Texas A\&M University. Email: grigoris@tamu.edu. This work is partially supported by NSF grant CCF-1900881 and Simon's grant 964286.}
\begin{document}

\begin{abstract}
We study the small ball probability of an order-$\ell$ simple random tensor $X=X^{(1)}\otimes\cdots\otimes X^{(\ell)}$ where $X^{(i)}, 1\leq i\leq\ell$ are independent random vectors in $\mathbb{R}^n$ that are log-concave or have independent coordinates with bounded densities. We show that the probability that the projection of $X$ onto an $m$-dimensional subspace $F$ falls within an Euclidean ball of length $\varepsilon$ is upper bounded by $\frac{\varepsilon}{(\ell-1)!}\left(C\log\left(\frac{1}{\varepsilon}\right)\right)^{\ell}$ and also this upper bound is sharp when $m$ is small. We also established that a much better estimate holds true for a random subspace.
\end{abstract}

\maketitle

\section{Introduction}\label{intro}
Tensor decomposition is a crucial problem in learning many latent variable models in data science, such as mixture models (see \cite{anandkumar2012method}, \cite{smoothed}), hidden Markov models (see \cite{anandkumar2012method}, \cite{chang1996full}, \cite{hsu2012spectral}), phylogenetic reconstruction (see \cite{comon1994independent}, \cite{mossel2005learning}) and so on. Many results and algorithms have been developed in the framework where the component vectors of the tensors are noisy, known as smoothed analysis model (see \cite{smoothedmodel}). Bhaskara et al \cite{smoothed} established algorithms to decompose fixed-rank random tensors based on Chang's lemma whose efficiency and robustness have been reduced to finding the small ball estimate for simple random tensors. The study of anti-concentration of random tensors is primarily inspired by this problem. 

In recent years, there has been a plethora of results on the concentration of random tensors. Several papers discussed the concentration inequalities for the projection of simple random tensor onto any fixed direction. Latala \cite{latala} gives an estimate on the case where the component vectors of the simple tensor are independent standard Guassian random vectors, of which Lehec \cite{Lehec2011} provides another proof based on Talagrand’s majorizing theorem. Later there are papers that considered more general distributions for the component vectors: Adamczak and Latala \cite{AdamczakandLatala} considered log-concave distribution; Adamczak and Wolff \cite{Adamczak2013ConcentrationIF} considered distribution that satisfies a Sobolev inequality; G\"{o}tze, Sambale, and Sinuli \cite{Gotze2019ConcentrationIF}
considered $\alpha$-subexponential distribution; Vershynin \cite{Vershynin2019ConcentrationIF} considered subgaussian distribution with optimal dependence on the degree of the tensors, of which Bamberger, Krahmer, Ward \cite{bamberger2021hansonwright} gave improved dependence on the dimension of the component vectors; and Adamczak, Latala, and Melle \cite{AdamczakLatalaMelle} extended the result to values in some Banach spaces. There are more applications of concentration inequalities for simple tensors: Jin, Kolda, Ward \cite{KroneckerFJLT} introduced a faster Johnson-Lindenstrauss projection for embedding vectors with Kronecker product structure; Bamberger, Krahmer, Ward \cite{KroneckerFJLToptimal} gave optimal Johnson–Lindenstrauss property for the aforementioned embeddings.

On the other hand, the anti-concentration is also an important part of contemporary probability and has strong connections with the metric-algebraic structure of the underlying space (see for example the survey \cite{LI2001533} for processes or \cite{TaoVu} \cite{Nguyen2013} for the Littwood-Offord problems). While anti-concentration of random variables and random vectors (see for example \cite{smallball} and \cite{linearimage}) have been extensively studied, that of random tensors remains unclear in many aspects. Small ball probabilities for simple random tensors have been established by Bhaskara et al \cite{smoothed}, Anari et al \cite{tensordecomp}, Glazer and Miklincer \cite{GLAZER2022109639}. Our main result builds on this direction by providing a sharp, small ball probability for random tensors under minimal assumptions on the randomness.

In this paper, we constructed anti-concentration results for simple random tensors under minimal assumptions. It is well-known that for continuous distributions, being anti-concentrated is equivalent to having bounded densities. We provided sharp small ball estimates for the projection of the tensor product of independent log-concave distributions onto a subspace of fixed dimension. And we used symmetrization techniques to extend the results to distributions with independent coordinates of bounded densities, which is the minimal assumption we may have. We also established that a much better estimate holds true for a random projected subspace, i.e. the random subspace in the Grassmanian with respect to the Haar measure on the real orthogonal group.

We refer to \cite{lim2021tensors} ,\cite{Landsberg2011TensorsGA} for detailed definition of tensors. A tensor $X\in\mathbb{R}^{n_1\otimes\cdots\otimes n_{\ell}}$ can be represented as a multidimensional array, that is
    $$X=\left(X_{i_1\cdots i_{\ell}}\right)_{i_1\cdots i_{\ell}}.$$
In particular, $X$ is a simple tensor if there exist vectors $X^{(j)}=\left(X^{(j)}_1,\cdots,X^{(j)}_n\right)\in\mathbb{R}^{n_j}, 1\leq j\leq\ell$, such that
    $$X=X^{(1)}\otimes\cdots\otimes X^{(\ell)}=\otimes_{j=1}^{\ell}X^{(j)}=\left(X_{i_1}^{(1)}\cdots X_{i_{\ell}}^{(\ell)}\right)_{i_1\cdots i_{\ell}}.$$
$X\in\mathbb{R}^{n_1\otimes\cdots\otimes n_{\ell}}$ is also a multilinear map $X:\mathbb{R}^{n_1\times\cdots\times n_{\ell}}\rightarrow\mathbb{R}$, such that for $Y\in\mathbb{R}^{n_1\otimes\cdots\otimes n_{\ell}}$,
    $$\left\langle X,Y\right\rangle:=\sum_{i_1\cdots i_{\ell}}X_{i_1\cdots i_{\ell}}Y_{i_1\cdots i_{\ell}},$$
which is usually refered to as the Frobenius inner product of tensors. And 
    $$\norm{X}_2:=\sqrt{\left\langle X,X\right\rangle}$$
is the Frobenius norm of a tensor, which aligns with the Euclidean norm when we view the tensor in $\mathbb{R}^{n_1\otimes\cdots\otimes n_{\ell}}$ as a flattened vector in $\mathbb{R}^{n_1\times\cdots\times n_{\ell}}$. In particular, if 
    $$X=\otimes_{j=1}^{\ell}X^{(j)}, Y=\otimes_{j=1}^{\ell}Y^{(j)},$$
then
    $$\left\langle X,Y\right\rangle:=\prod_{j=1}^{\ell}\left\langle X^{(j)},Y^{(j)}\right\rangle$$
and 
    $$\norm{X}_2=\prod_{j=1}^{\ell}\norm{X^{(j)}}_2.$$

Recall that a vector has a unique orthogonal decomposition with respect to a linear subspace. For any tensor $X\in\mathbb{R}^{n_1\otimes\cdots\otimes n_{\ell}}$ and subspace $F\subset\mathbb{R}^{n_1\otimes\cdots\otimes n_{\ell}}$, there exists a unique tensor $\Pi_FX$ in $F$ such that $\norm{X-\Pi_FX}$ is minimized. $\Pi_FX$ is called the orthogonal projection of $X$ onto $F$.
 
In the paper, we use the letters $C,C',C_1,C_2,C_3,...$ to represent universal constants that may be different from line to line. We haven't tried to optimize these constants. One can compute all the constants explicitly, which is not the main purpose of this paper. Our main result reads as follows:

\begin{theorem}\label{1.1}
    Let $X^{(j)}\in\mathbb{R}^{n_j}, 1\leq j\leq\ell$ be independent random vectors with independent coordinates whose densities have uniform norms bounded by some constant $M>0$. Suppose $F$ is a subspace in $\mathbb{R}^{n_1\otimes\cdots\otimes n_{\ell}}$ with dimension $m$ and suppose $z_j\in\mathbb{R}^{n_j}, 1\leq j\leq\ell$ are arbitrary vectors, then for $0<\varepsilon<e^{-c{\ell}}$,
        $$\mathbb{P}\left(\norm{\Pi_F\otimes_{j=1}^{\ell}\left(X^{(j)}-z_j\right)}_2\leq \frac{1}{(CM)^{\ell}}\varepsilon\sqrt{m}\right)\leq\min\left\{m,{C'}^{\ell}\log\frac{1}{\varepsilon}\right\}\frac{\varepsilon}{(\ell-1)!}\left(C''\log\frac{1}{\varepsilon}\right)^{\ell-1}.$$
\end{theorem}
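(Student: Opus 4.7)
The plan is to proceed by induction on the tensor order $\ell$, preceded by a symmetrization / log-concave reduction. For the reduction, introduce an independent copy $\tilde X^{(j)}$ of each $X^{(j)}$: each coordinate of $X^{(j)}-\tilde X^{(j)}$ is symmetric with density bounded by $M$, and a standard domination argument compares such a variable to a uniform on an interval of length $\Theta(1/M)$ for small-ball purposes. Absorbing the resulting factor into $(CM)^{\ell}$, we may therefore assume that the coordinates of each $X^{(j)}$ are independent log-concave and $z_j=0$.

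The base case $\ell=1$ combines two estimates: a volume comparison in $F$ yields probability at most $(C_1 \varepsilon)^m$, while the coordinatewise density bound gives probability at most $C_2 M\varepsilon\sqrt m$. A dichotomy on whether $m \leq C_3\log(1/\varepsilon)$ produces the prefactor $\min\{m, C'\log(1/\varepsilon)\}$.

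For the inductive step, condition on $X^{(1)},\ldots,X^{(\ell-1)}$ and let $Y=\otimes_{j=1}^{\ell-1}X^{(j)}$. Then
\[
\Pi_F\bigl(Y\otimes X^{(\ell)}\bigr)=T_Y X^{(\ell)},
\]
where $T_Y:\mathbb{R}^{n_\ell}\to F$ is the linear operator obtained by contracting $F$ against $Y$ in its first $\ell-1$ tensor slots; its top singular value satisfies $s_1(T_Y)=\sup_{\|x\|=1}\norm{\Pi_{F_x}Y}_2$, where $F_x\subset\mathbb{R}^{n_1\otimes\cdots\otimes n_{\ell-1}}$ is the $m$-dimensional subspace obtained by the analogous contraction of $F$ against $x$. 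The linear-image small-ball inequality for log-concave vectors (\cite{linearimage}) gives
\[
\mathbb{P}\bigl(\norm{T_Y X^{(\ell)}}_2\leq r\,\big|\,Y\bigr)\leq \frac{CMr}{s_1(T_Y)},
\]
with a sharper $(CMr/s_k(T_Y))^k$ refinement needed to recover the $\min\{m,(C')^\ell\log(1/\varepsilon)\}$ prefactor when $m$ is large. A net argument over $S^{n_\ell-1}$ combined with the inductive hypothesis applied to each $F_x$ then yields the lower-tail estimate
\[
\mathbb{P}\bigl(s_1(T_Y)\leq t\sqrt m/(CM)^{\ell-1}\bigr)\leq C_4\frac{t(\log(1/t))^{\ell-2}}{(\ell-2)!},
\]
and a layer-cake integration of $CMr/s_1(T_Y)$ against this tail produces the recursive integral $\int_0^1 (\log(1/t))^{\ell-2}/(\ell-2)!\,\dd{t}$, which supplies exactly the factor $(\log(1/\varepsilon))^{\ell-1}/(\ell-1)!$ --- the same recursion as the density of a product of $\ell$ independent uniforms --- and closes the induction.

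The principal technical obstacle is this uniform lower bound on $s_1(T_Y)$. Upgrading the pointwise bound at each $x\in S^{n_\ell-1}$ to a supremum requires a net of cardinality exponential in $n_\ell$, which a crude union bound cannot afford; one must combine the net with Lipschitz control of $x\mapsto \norm{\Pi_{F_x}Y}_2$ via $\norm{Y}_2$, the latter concentrated through the Paouris-type inequalities for log-concave tensors established earlier. Recovering the $\min\{m,(C')^\ell\log(1/\varepsilon)\}$ improvement over the naive $m$ prefactor similarly demands tracking several top singular values of $T_Y$ simultaneously, which adds bookkeeping but no new conceptual ingredients.
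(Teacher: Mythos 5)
Your proposal has two genuine gaps, and the route diverges substantially from the paper's.

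\textbf{The reduction step.} Symmetrization does not produce log-concavity: if $X$ has a bounded density, then $X-\tilde X$ has a symmetric bounded density, but it need not be log-concave (take $X$ uniform on two disjoint intervals). Yet every tool you invoke downstream --- Gu\'edon-type small-ball estimates, Paouris-type norm concentration, and the ``linear-image small-ball inequality for log-concave vectors'' --- requires log-concavity of the component vectors. The paper performs the reduction differently, via the stochastic-dominance \textsc{Theorem}~\ref{1.3} / \textsc{Corollary}~\ref{3.4}: the Rogers--Brascamp--Lieb--Luttinger inequality and Kanter's theorem show that $\otimes_j M(X^{(j)}-z_j) \prec \otimes_j \mathbf{1}_{[-1/2,1/2]^{n_j}}$ directly, without symmetrization (the rearrangement step handles arbitrary bounded densities and the shift by $z_j$ is absorbed because dominance is tested against symmetric convex bodies). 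The uniform on a cube is honestly log-concave and isotropic after rescaling, which is what \textsc{Theorem}~\ref{1.4} needs.

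\textbf{The inductive step.} The plan hinges on a uniform lower bound for $s_1(T_Y)$ over $Y$, obtained by applying the inductive small-ball bound at each direction $x\in S^{n_\ell-1}$ of a net. This cannot work: for a fixed $x$ the inductive hypothesis gives a probability of order $t(\log(1/t))^{\ell-2}/(\ell-2)!$, which decays only polynomially in $t$, while a $\delta$-net of the sphere has cardinality exponential in $n_\ell$. You acknowledge this, but the proposed repair --- Lipschitz control of $x\mapsto\|T_Y x\|$ via concentration of $\|Y\|_2$ --- controls oscillation, not the minimum, and concentration of $\|Y\|_2$ is irrelevant because the singular values of $T_Y$ depend on the direction of $Y$, not its length. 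Small-ball estimates do not chain along a net the way upper-tail estimates do. The paper's proof of \textsc{Theorem}~\ref{1.4} avoids the supremum over a sphere altogether: it conditions on $X^{(\ell)}$ (the single vector, not the $(\ell-1)$-fold product), forms the contraction $\theta_{X^{(\ell)}}=A_\ell X^{(\ell)}$, notes $\mathbb{E}\|\theta_{X^{(\ell)}}\|_2^2=1$ so Gu\'edon's lemma controls the event that $\|\theta_{X^{(\ell)}}\|_2$ is small, feeds the normalized direction $\theta_{X^{(\ell)}}/\|\theta_{X^{(\ell)}}\|_2$ into the inductive hypothesis for $\otimes_{j<\ell}X^{(j)}$, and closes the recursion by a one-variable integral (\textsc{Lemma}~\ref{B1}). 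No net appears.

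Two smaller points. The identity $s_1(T_Y)=\sup_{\|x\|=1}\|\Pi_{F_x}Y\|_2$ with $F_x$ an $m$-dimensional subspace is incorrect: the contractions of the orthonormal basis $f^{(k)}$ against $x$ are not themselves orthonormal, so $\sum_k|\langle Y, f^{(k)}\text{-contracted}\rangle|^2$ is not the squared norm of a projection onto a subspace of dimension $m$. And the $\min\{m,(C')^\ell\log(1/\varepsilon)\}$ prefactor is obtained in the paper not by tracking several singular values of $T_Y$, but by a union bound (for the factor $m$) together with a negative-moment calculation (Lemma~\ref{C1} and Remark~\ref{C2}: H\"older against the one-dimensional bounds, then Markov), which is a different and cleaner mechanism than the one you sketch.
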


The above estimate cannot be improved (see Appendix \ref{A} for the details). In fact, let $X^{(1)},\cdots,X^{(\ell)}$ be independent uniform distributions on $[-\sqrt{3},\sqrt{3}]^n$ such that $\mathbb{E}\left[X^{(j)}_i\right]=0$ and $\mathbf{Var}\left(X_i^{(j)}\right)=1$ for $1\leq i\leq n, 1\leq j\leq\ell$. Then for any $1\leq m\leq n$, there exists a subspace $F$ of dimension $m$ such that
\begin{align*}
    \mathbb{P}\left(\norm{\Pi_FX^{(1)}\otimes\dots\otimes X^{(\ell)}}_2\leq\varepsilon\sqrt{m}\right)\geq\frac{C\varepsilon}{(\ell-2)!}\left(\log{\frac{1}{\varepsilon}}\right)^{\ell-2}.
\end{align*}

It is easy to check (see Section 1 in \cite{linearimage}) that for continuous distributions, to be anti-concentrated is equivalent to have bounded densities. In this sense, for continuous distributions, the densities to be bounded are practically the minimal assumptions that we can pose to the problem.

We observe that the orthogonal projection of $\otimes_{j=1}^{\ell}X^{(j)}$ onto subspaces with the same dimension can have quite different small ball behaviors. When the subspace is generic, then with high probability we are able to obtain much better estimates. Consider the orthogonal group $\mathbb{O}\left(n^{\ell}\right)$ equipped with the unique Haar probability measure invariant under the action of the group (See Chapter 1 in \cite{elizabeth}). In the rest of the paper we denote by $G_{n,m}$ the Grassmannian manifold of $m$-dimensional subspaces in $\mathbb{R}^n$ equipped with the Haar probability measure invariant under the action of orthogonal group.
We have the following theorem.

\begin{theorem}\label{1.2}
    There exists a subset $\mathcal{S}_{\mathcal{F}}$ in $G_{n^{\ell},m}$ with Haar measure at least $1-e^{-c\max\left\{m,n\right\}}$. Let $X^{(1)},\cdots,X^{(\ell)}\in\mathbb{R}^n$ be independent random vectors with independent coordinates whose densities have uniform norms bounded by some constant $M>0$. Let $z_1,\cdots,z_{\ell}\in\mathbb{R}^{n}$ be arbitrary vectors and let $m\leq n^{\ell}$. Then for every subspace $F\in\mathcal{S}_{\mathcal{F}}$ with dimension $m$, then for $0<\varepsilon<1$,
    \[ \mathbb{P}\left(\norm{\Pi_F\otimes_{j=1}^{\ell}\left(X^{(j)}-z_j\right)}_2\leq\frac{1}{(CM)^{\ell}}\varepsilon\sqrt{m}\right)\leq 
          (C'\varepsilon)^{C''\min\left\{m,n\right\}}+e^{-C'''n}.
    \]
\end{theorem}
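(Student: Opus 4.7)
The plan is to condition on the last $\ell-1$ factors $Y^{(j)} := X^{(j)} - z_j$, $j \geq 2$, and thereby reduce to a linear anti-concentration problem for $Y^{(1)}$, with the improvement over Theorem~\ref{1.1} coming from the fact that for Haar-random $F$ the resulting linear operator has all its singular values simultaneously well-controlled.

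Setting $v := \otimes_{j=2}^{\ell} Y^{(j)}$ and $\phi_v(y) := y \otimes v$, the columns of $\phi_v$ are mutually orthogonal of common norm $\|v\|$, so $T_v := \Pi_F \phi_v$ has rank $r := \min\{m,n\}$ and its nonzero singular values are $\|v\|\cos\theta_j(F, E_v)$ for $E_v := \phi_v(\mathbb{R}^n)$. Since $\|\Pi_F \otimes_{j=1}^{\ell} Y^{(j)}\| = \|T_v Y^{(1)}\|$ and any orthogonal projection of a vector with independent coordinates of densities bounded by $M$ has density $\leq (CM)^k$ on its $k$-dimensional range (in the vein of \cite{linearimage}), the ellipsoidal volume bound gives the conditional anti-concentration
\[
\mathbb{P}\bigl(\|T_v Y^{(1)}\| \leq \delta \mid v\bigr) \;\leq\; \Bigl(\frac{CM\delta}{\sqrt{r}}\Bigr)^{r} \prod_{j=1}^{r} \sigma_j(T_v)^{-1}.
\]

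I would then define $\mathcal{S}_\mathcal{F}$ as the set of $F$ satisfying $\prod_{j=1}^{r} \sigma_j(\Pi_F \phi_w) \geq c_0^{r}(\max\{m,n\}/n^\ell)^{r/2}$ for every unit simple tensor $w \in S^{n^{\ell-1}-1}$. For any single fixed $w$, by rotational invariance $\Pi_F|_{E_w}$ is distributed as an $m \times n$ block of a Haar orthogonal matrix in $O(n^\ell)$, comparable (for $m, n \ll n^\ell$) to $n^{-\ell/2}G$ with $G$ an $m \times n$ standard Gaussian; the logarithm of the pseudo-determinant is a sum of independent $\log\chi^2$'s whose total variance is controlled in every regime of $(m,n)$, giving subgaussian concentration at constant scale with failure probability $\leq e^{-c_1 \max\{m,n\}}$. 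A net argument over the simple-tensor manifold (of dimension $(\ell-1)(n-1)$, with $\varepsilon_0$-net of size $(C/\varepsilon_0)^{(\ell-1)n}$), together with the Hoffman--Wielandt estimate $\sum_j|\sigma_j(T_w) - \sigma_j(T_{w'})|^2 \leq \|\phi_w - \phi_{w'}\|_{\mathrm{HS}}^2 \leq n\|w - w'\|^2$, transfers the estimate from net points to arbitrary $w$; since $\max\{m,n\} \geq n$, $\varepsilon_0$ can be chosen as a suitable $\ell$-dependent constant so that $(\ell-1)n\log(C/\varepsilon_0)$ is absorbed into $c_1 \max\{m,n\}/2$, yielding $\mathrm{Haar}(\mathcal{S}_\mathcal{F}) \geq 1 - e^{-c\max\{m,n\}}$.

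On the event $\mathcal{E}$ that $\|Y^{(j)}\| \geq c_2 \sqrt{n}/M$ for all $j \geq 2$, which has $\mathbb{P}(\mathcal{E}^c) \leq \ell e^{-c_3 n} \leq e^{-C''' n}$ by the volume bound $\mathbb{P}(\|Y^{(j)}\| \leq \rho) \leq (CM\rho/\sqrt{n})^n$, one has $\|v\| \geq (c_2\sqrt{n}/M)^{\ell-1}$, so for $F \in \mathcal{S}_\mathcal{F}$ the singular-value product satisfies $\prod_j \sigma_j(T_v) \geq (cM^{-(\ell-1)}\sqrt{\max\{m,n\}/n})^{r}$. Substituting this and $\delta = \varepsilon\sqrt{m}/(CM)^\ell$ into the conditional bound, the $M^\ell$-factors cancel and the dimensional factor $\sqrt{m/r}\cdot\sqrt{n/\max\{m,n\}}$ equals $1$ in both cases $m \leq n$ and $m \geq n$, leaving $(C'\varepsilon)^{r}$; combined with $\mathbb{P}(\mathcal{E}^c)$ this establishes the theorem. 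The main obstacle is the Lipschitz propagation in the net argument when $m \ll n$, where the Hoffman--Wielandt bound is lossy at the constant scale $\varepsilon_0$ forced by balancing the entropy $(\ell-1)n\log(C/\varepsilon_0)$ against $c_1 n$; one may need to sharpen the concentration in a finer metric or to exploit the tensor-product structure $w = u_2 \otimes \cdots \otimes u_\ell$ to reduce the effective entropy via an iterative coordinate-by-coordinate argument.
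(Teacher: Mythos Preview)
Your reduction to the linear problem $\|T_v Y^{(1)}\|$ and the ellipsoidal volume bound are fine, but the net argument defining $\mathcal{S}_{\mathcal{F}}$ has a genuine gap that is more severe than you indicate. You require $\mathcal{S}_{\mathcal{F}}$ to control $\prod_j \sigma_j(\Pi_F\phi_w)$ uniformly over \emph{all} unit simple tensors $w$, and you propose to pass from an $\varepsilon_0$-net to arbitrary $w$ via Hoffman--Wielandt. The entropy of the simple-tensor manifold is of order $(\ell-1)n\log(C/\varepsilon_0)$, and balancing this against the single-point concentration $e^{-c_1\max\{m,n\}}$ forces $\varepsilon_0$ to be of \emph{constant} order (even $\ell$-dependent, but not decaying in $n$). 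On the other hand, for a Haar $F$ the singular values $\sigma_j(\Pi_F\phi_w)$ live at scale $\sqrt{\max\{m,n\}/n^{\ell}}$, since $\Pi_F|_{E_w}$ is distributed as an $m\times n$ block of a Haar element of $\mathbb{O}(n^{\ell})$. Thus the perturbation $|\sigma_j(T_w)-\sigma_j(T_{w'})|\leq \|w-w'\|\leq\varepsilon_0$ is enormous compared to the singular values themselves whenever $\max\{m,n\}\ll n^{\ell}$, and no control on the product survives the transfer from net points. This is not a ``lossy'' estimate confined to $m\ll n$; it destroys the argument in the entire range $m=o(n^{\ell})$, and neither a finer metric nor an iterative coordinate argument will repair it, because the entropy--scale mismatch is intrinsic to demanding \emph{uniform} control over $w$.

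The paper sidesteps this obstacle in two ways. First, it does not work directly with bounded-density coordinates: it invokes the stochastic-dominance result (Theorem~\ref{1.3} / Corollary~\ref{3.4}) to reduce to $X^{(j)}$ uniform on the cube, which is isotropic log-concave, and then appeals to Theorem~\ref{1.5}. Second, and more importantly, in proving Theorem~\ref{1.5} the good set $\mathcal{S}_{\mathcal{F}}$ is \emph{not} defined by uniform singular-value control over all simple tensors. Instead, for each net point $\phi\in\mathcal{N}\subset S^{\min\{m,n\}-1}$, the paper bounds the \emph{$X$-averaged} moment $\mathbb{E}_{\bar X}|\mathcal{A}_\phi(U)-1|^{q}$ and lets $\mathcal{S}_{\mathcal{F}}$ be the set of $U$ where these finitely many moments are small. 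On that set one then shows, via a second Markov step and a net over $S^{\min\{m,n\}-1}$ (dimension $\min\{m,n\}$, not $(\ell-1)n$), that the smallest singular value of $\theta$ is bounded below \emph{with high probability in $\bar X$}, not almost surely. The failure event in $\bar X$ contributes the additive $e^{-cn}$ term, and the truncation to $\|v\|\approx n^{(\ell-1)/2}$ (Proposition~\ref{2-sided}) keeps the Lipschitz constant of $\mathcal{A}_\phi$ under control. The key conceptual difference is that the $X$-randomness is integrated out \emph{before} defining $\mathcal{S}_{\mathcal{F}}$, so one never needs a net over the simple-tensor manifold.
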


The proof of \textsc{Theorem} \ref{1.1} relies on a stochastic dominance argument. We show that the small ball behavior of tensor product of independent random vectors with independent coordinates and bounded densities is dominated by that of tensor product of uniform random vectors on the cube. In fact we have the following more general theorem. This line of research builds on a series of papers on empirical isoperimetric inequalities and applications to small ball estimates. See Paouris, Pivovarov \cite{randomized}, \cite{RandomConvexSets} and Dann, Paouris, Pivovarov \cite{BoundingMarginal}.

\begin{theorem}\label{1.3}
    Let $X^{(j)}\in\mathbb{R}^{n_j}, 1\leq j\leq\ell$ be independent random vectors with independent coordinates whose densities have uniform norms bounded by 1, then for any symmetric convex body $K\subset\mathbb{R}^{n_1\otimes\cdots\otimes n_{\ell}}$, we have
    $$\mathbb{P}\left(\otimes_{j=1}^{\ell} X^{(j)}\in K\right)\leq\mathbb{P}\left(\otimes_{j=1}^{\ell}\mathbf{1}_{{\left[-\frac{1}{2},\frac{1}{2}\right]}^{n_j}}\in K\right).$$
    Here by an abuse of notation, we let $\mathbf{1}_{\left[-\frac{1}{2},\frac{1}{2}\right]^{n_j}}$ denotes  the  uniform  distribution  on $\left[-\frac{1}{2},\frac{1}{2}\right]^{n_j}$.
\end{theorem}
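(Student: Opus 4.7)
\textbf{Proof proposal for Theorem~\ref{1.3}.}

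The plan is to reduce the tensor inequality to a non-tensor ``base case'' by sequential replacement of factors, and then prove the base case by rearrangement and induction.

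\emph{Reduction.} I would replace the factors $X^{(j)}$ by the uniforms $U^{(j)}$ on $[-1/2,1/2]^{n_j}$ one at a time. Conditioning on the other factors bundles them into a fixed tensor $Z$, and the map $x\mapsto x\otimes Z$ is a linear map $L_Z\colon\mathbb{R}^{n_j}\to\mathbb{R}^{n_1\otimes\cdots\otimes n_\ell}$, so $L_Z^{-1}(K)$ is symmetric convex in $\mathbb{R}^{n_j}$. Hence everything reduces to the following base case: if $Y_1,\ldots,Y_n\in\mathbb{R}$ are independent with densities $\|f_i\|_\infty\leq 1$ and $K'\subset\mathbb{R}^n$ is symmetric convex, then $\mathbb{P}(Y\in K')\leq |K'\cap[-1/2,1/2]^n|$.

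\emph{Base case.} First apply Brascamp--Lieb--Luttinger (after approximating $K'$ by intersections of symmetric slabs $\{|\langle v_i,x\rangle|\leq c_i\}$) to replace each $f_i$ by its symmetric decreasing rearrangement $f_i^*$ without decreasing the integral. Then induct on $n$. The $n=1$ case is the trivial bound $\int_{-r}^r f^*\leq \min(2r,1)$. For the inductive step, apply the layer cake $f_n^*=\int_0^1\mathbf{1}_{[-r_n(t),r_n(t)]}\,dt$ (so $\int_0^1 r_n(t)\,dt=1/2$) to obtain
\[
\int\prod_j f_j^*(x_j)\mathbf{1}_{K'}(x)\,dx \;=\; \int_0^1\int\prod_{j<n}f_j^*(x_j)\,h_t(y)\,dy\,dt,
\]
where $y=(x_1,\ldots,x_{n-1})$ and $h_t(y)=\bigl|\{x_n:(y,x_n)\in K',\ |x_n|\leq r_n(t)\}\bigr|$. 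By Prekopa--Leindler applied to the symmetric convex body $K'\cap\{|x_n|\leq r_n(t)\}$, the function $h_t$ is symmetric and log-concave in $y$, so every super-level set $\{h_t\geq s\}$ is symmetric convex in $\mathbb{R}^{n-1}$. Applying the inductive hypothesis to these super-level sets and reintegrating by layer cake yields
\[
\int\prod_{j<n}f_j^*(x_j)\,h_t(y)\,dy \;\leq\; \int_{[-1/2,1/2]^{n-1}}\!h_t(y)\,dy \;=\; J(r_n(t)),
\]
where $J(r):=|K'\cap([-1/2,1/2]^{n-1}\times[-r,r])|$. Finally, Brunn--Minkowski applied to the symmetric convex body $K'\cap([-1/2,1/2]^{n-1}\times\mathbb{R})$ shows that $J'(r)/2$ is symmetric unimodal in $r$, hence $J$ is concave on $[0,\infty)$; Jensen then gives $\int_0^1 J(r_n(t))\,dt\leq J(1/2)=|K'\cap[-1/2,1/2]^n|$, closing the induction.

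The main obstacle I foresee is the inductive step of the base case: the direct $x_n$-slices of $K'$ are not themselves symmetric (the symmetry of $K'$ only gives $K'_{-c}=-K'_c$), so the inductive hypothesis cannot be applied to slices of $K'$ directly. The trick is to first truncate $K'$ by the symmetric slab $\{|x_n|\leq r_n(t)\}$, integrate out $x_n$ to obtain the symmetric log-concave function $h_t$, and then apply the inductive hypothesis to the super-level sets of $h_t$, which \emph{are} symmetric convex by Prekopa--Leindler. The BLL reduction and the outer telescoping over factors are essentially bookkeeping.
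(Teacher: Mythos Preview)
Your proposal is correct. The reduction step---replacing the factors $X^{(j)}$ one at a time by conditioning on the others and using that $x\mapsto x\otimes Z$ is linear---is exactly the mechanism behind the paper's proof (the paper phrases it as Fubini plus the identity $\langle\otimes_j X^{(j)},u\rangle=\langle X^{(p)},\theta_p\rangle$, but it is the same move). The genuine difference is in how the base case in a single $\mathbb{R}^{n}$ is handled. The paper does this in two strokes: first Rogers--Brascamp--Lieb--Luttinger to pass from $f_i$ to $f_i^\ast$ (same as you), and then quotes \emph{Kanter's theorem} (unimodal $f_i^\ast\prec\mathbf{1}_{[-1/2,1/2]}$ tensorizes) as a black box to pass from the $f_i^\ast$ to the uniforms. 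You instead \emph{prove} that second step from scratch by induction on the dimension, using Pr\'ekopa--Leindler to make the marginal $h_t$ symmetric log-concave (so its super-level sets are symmetric convex and the inductive hypothesis applies) and Brunn--Minkowski to make $J$ concave so that Jensen closes the induction. This is a legitimate and self-contained alternative: it trades a citation for a short Brunn--Minkowski argument, and in effect re-derives the relevant case of Kanter's theorem. The paper's route is shorter if Kanter is taken as known; yours is more elementary in the sense that it stays entirely within the Brunn--Minkowski/BLL toolkit already in play.

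One small point to clean up: you state the base case for a symmetric convex \emph{body} $K'$, but $L_Z^{-1}(K)$ need not be bounded (e.g.\ if $Z$ has a zero coordinate). Your inductive argument does not actually use boundedness anywhere, so simply state the base case for symmetric convex \emph{sets}; alternatively, intersect with a large cube and pass to the limit.
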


With \textsc{Theorem} \ref{1.2} it suffices to study small ball probabilities for tensor product of random vectors with independent coordinates uniformly distributed on a centered interval. Instead of working with these particular distributions, we will work with a general log-concave isotropic distribution. (See Section \ref{Preliminaries} for precise definition).

\begin{theorem}\label{1.4}
    Let $X^{(j)}\in\mathbb{R}^{n_j}, 1\leq j\leq\ell$ be independent log-concave isotropic random vectors. Suppose $F$ is a subspace in $\mathbb{R}^{n_1\otimes\cdots\otimes n_{\ell}}$ with dimension $m$ and suppose $f^{(1)},\cdots,f^{(m)}$ is an orthonormal basis for $F$. Then for $0<\varepsilon<e^{-c{\ell}}$,
        $$\mathbb{P}\left(\left|\langle X^{(1)}\otimes\dots\otimes X^{(\ell)},f^{(k)} \rangle\right|\leq\varepsilon\right)\leq \frac{\varepsilon}{(\ell-1)!}\left(C\log\frac{1}{\varepsilon}\right)^{\ell-1},$$
    and
        $$\mathbb{P}\left(\norm{\Pi_FX^{(1)}\otimes\dots\otimes X^{(\ell)}}_2\leq\varepsilon\sqrt{m}\right)\leq\min\left\{m,{C'}^{\ell}\log\frac{1}{\varepsilon}\right\}\frac{\varepsilon}{(\ell-1)!}\left(C\log\frac{1}{\varepsilon}\right)^{\ell-1}.$$
\end{theorem}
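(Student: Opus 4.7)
I will prove the marginal small-ball inequality by induction on $\ell$ and then deduce the projection bound from it by an averaging argument. The base case $\ell=1$ is immediate: the marginal $\langle X^{(1)},f\rangle$ of an isotropic log-concave vector along a unit direction $f$ has a log-concave, isotropic one-dimensional density bounded by a universal constant, so $\mathbb{P}(|\langle X^{(1)},f\rangle|\leq\varepsilon)\leq C\varepsilon$. For the inductive step, I view a unit tensor $f\in\mathbb{R}^{n_1\otimes\cdots\otimes n_\ell}$ as a linear operator $F\colon\mathbb{R}^{n_\ell}\to\mathbb{R}^{n_1\otimes\cdots\otimes n_{\ell-1}}$ via mode-$\ell$ flattening, so $\norm{F}_{\mathrm{HS}}=\norm{f}_2=1$ and
$$\langle\otimes_{j=1}^\ell X^{(j)},f\rangle=\langle\otimes_{j=1}^{\ell-1}X^{(j)},\,FX^{(\ell)}\rangle.$$
Conditioning on $X^{(\ell)}=x$ and applying the inductive hypothesis to the unit tensor $Fx/\norm{Fx}_2$ gives, with $\phi_\ell(t):=\frac{t}{(\ell-1)!}(C\log(1/t))^{\ell-1}$,
$$\mathbb{P}\bigl(|\langle\otimes_{j=1}^\ell X^{(j)},f\rangle|\leq\varepsilon\mid X^{(\ell)}=x\bigr)\leq\phi_{\ell-1}\bigl(\varepsilon/\norm{Fx}_2\bigr).$$

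The next step is to integrate over $X^{(\ell)}$. Via a layer-cake formula, the problem reduces to controlling the small-ball of $\norm{FX^{(\ell)}}_2$. I will use the dimension-free linear small-ball estimate
$$\mathbb{P}\bigl(\norm{FX^{(\ell)}}_2\leq t\bigr)\leq Ct,$$
valid for any matrix $F$ with $\norm{F}_{\mathrm{HS}}=1$ and any isotropic log-concave $X^{(\ell)}$. Combining with the conditional bound and splitting the resulting integral at the scale $t=C\varepsilon$, the bulk contributes, after the substitution $u=\log(1/t)$,
$$C\varepsilon\int_{C\varepsilon}^{1}\frac{\phi_{\ell-1}'(t)}{t}\,dt\sim\frac{C\varepsilon\,(\log(1/\varepsilon))^{\ell-1}}{(\ell-1)!},$$
while the boundary term $\phi_{\ell-1}(C\varepsilon)$ is of order $\varepsilon(\log(1/\varepsilon))^{\ell-2}/(\ell-2)!$ and is absorbed by the main term under the assumption $\varepsilon<e^{-c\ell}$ (this is exactly the role of that assumption). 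Together these yield $\phi_\ell(\varepsilon)$, closing the induction.

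The principal obstacle is the dimension-free small-ball $\mathbb{P}(\norm{FX}_2\leq t)\leq Ct$. The naive one-dimensional bound $\norm{FX}_2\geq\sigma_1(F)|v_1^TX|$ gives only $Ct/\sigma_1(F)$, which is dimension-sensitive when $F$ has a flat spectrum (since $\sigma_1(F)$ can be as small as $1/\sqrt{\min(n_\ell,n_1\cdots n_{\ell-1})}$). I plan to get around this by controlling the density at the origin of the log-concave random vector $FX$, whose covariance is $FF^T$: by a standard bound on log-concave densities (e.g., via Pr\'ekopa--Leindler) combined with the volume of a small Euclidean ball in the effective subspace, one obtains a universal linear bound uniformly in $F$. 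Verifying this dimension-free estimate, with constants that survive the iteration in $\ell$, is the real technical heart of the argument.

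For the projection inequality I will use the identity $\norm{\Pi_FX}_2^2=\sum_{k=1}^m|\langle X,f^{(k)}\rangle|^2$: if $\norm{\Pi_FX}_2\leq\varepsilon\sqrt m$, then by Markov at least $m/2$ of the indices $k$ satisfy $|\langle X,f^{(k)}\rangle|\leq\sqrt 2\,\varepsilon$, and a second Markov application to the count yields
$$\mathbb{P}(\norm{\Pi_FX}_2\leq\varepsilon\sqrt m)\leq\frac{2}{m}\sum_{k=1}^m\mathbb{P}\bigl(|\langle X,f^{(k)}\rangle|\leq\sqrt 2\,\varepsilon\bigr)\leq 2\phi_\ell(\sqrt 2\,\varepsilon).$$
In the regime $m\leq C'^\ell\log(1/\varepsilon)$, pigeonhole produces at least one index $k$ with $|\langle X,f^{(k)}\rangle|\leq\varepsilon$, and a plain union bound yields the alternative estimate $m\phi_\ell(\varepsilon)$; taking the minimum of these two bounds yields the stated form with factor $\min\{m,C'^\ell\log(1/\varepsilon)\}$.
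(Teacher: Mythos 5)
Your architecture for the marginal bound is exactly the paper's: condition on $X^{(\ell)}$, apply the inductive hypothesis to the normalized direction $FX^{(\ell)}/\norm{FX^{(\ell)}}_2$ (the paper calls $FX^{(\ell)}$ the vector $\theta_{X^{(\ell)}}$), then integrate using a linear small-ball estimate for $\norm{FX^{(\ell)}}_2$; the paper does the integral by a dyadic decomposition (Lemma~\ref{B1}), your layer-cake/integration-by-parts is the continuous version of the same computation. The genuine gap is the proposed proof of the ``technical heart,'' the dimension-free estimate $\mathbb{P}(\norm{FX}_2\leq t)\leq Ct$ for $\norm{F}_{\mathrm{HS}}=1$. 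The density/volume route does not yield a linear bound uniformly in $F$: if $F$ has a flat spectrum of rank $r$, say $F=r^{-1/2}\mathbb{I}_r$, then $\norm{f_{FX}}_\infty\cdot\mathrm{vol}\bigl(tB_2^r\bigr)\sim(C\mathcal{L}_r t)^r$ where $\mathcal{L}_r$ is the isotropic constant of $FX$ (of order $\sqrt{\log r}$). For $t$ in the window $1/(C\mathcal{L}_r)\lesssim t\lesssim 1/C$ this quantity is exponentially large in $r$, so it gives nothing, and the trivial bound $\mathbb{P}\leq 1$ does not give $\leq Ct$ on that window either. Your layer-cake integral needs the linear bound on the whole range $t\in[\varepsilon,\,\mathrm{const}]$, so this window cannot be skipped. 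What actually closes the argument is Gu\'edon's inequality (Theorem~\ref{guedonthm}, specialized in Lemma~\ref{Guedon}): $\norm{FX}_2$ is a seminorm in $X$, $FX$ is log-concave, $\mathbb{E}\norm{FX}_2^2=\norm{F}_{\mathrm{HS}}^2=1$, and the reverse H\"older inequality (Borell's lemma) gives $\mathbb{E}\norm{FX}_2\geq c$ for a universal $c$; Gu\'edon then delivers $\mathbb{P}(\norm{FX}_2\leq t)\leq C't/c$. This is the input the induction requires, and it is not obtainable by a pointwise density estimate.

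By contrast, your argument for the projection bound is correct and is a genuine simplification over the paper's. The paper controls negative moments $\mathbb{E}\norm{\Pi_FX}_2^{-q}$ via the AM--GM and H\"older inequalities and then applies Markov (Lemmas~\ref{C1}--\ref{C2}). Your double application of Markov — at least $m/2$ coordinates must satisfy $|\langle X,f^{(k)}\rangle|\leq\sqrt2\,\varepsilon$, then bound the expected count — yields $\mathbb{P}(\norm{\Pi_FX}_2\leq\varepsilon\sqrt m)\leq 2\phi_\ell(\sqrt2\,\varepsilon)\leq C\phi_\ell(\varepsilon)$ (after adjusting the constant $c$ in the range $\varepsilon<e^{-c\ell}$ so that $\sqrt2\,\varepsilon$ is still admissible), a prefactor that is a universal constant rather than the paper's $\min\{m,\,{C'}^\ell\log(1/\varepsilon)\}$. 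Taken together with the trivial union bound $m\phi_\ell(\varepsilon)$ this gives the stated result with a strictly smaller prefactor.
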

Note that in \textsc{Theorem} \ref{1.4}, in contrast to \textsc{Theorem} \ref{1.1}. we do not require the coordinates of each component vector be independent.
\begin{theorem}\label{1.5}
    Let $X^{(1)},\cdots,X^{(\ell)}\in\mathbb{R}^n$ be independent isotropic log-concave random vectors. Let $z_1,\cdots,z_{\ell}\in\mathbb{R}^{n}$ be arbitrary vectors and let $m\leq n^{\ell}$. Then there exists a subset $\mathcal{S}_{\mathcal{F}}$ in $G_{n^{\ell},m}$ with Haar measure at least $1-e^{-c\max\left\{m,n\right\}}$. For every subspace $F\in\mathcal{S}_{\mathcal{F}}$ with dimension $m$,  then for $0<\varepsilon<1$, 
    \[ \mathbb{P}\left(\norm{\Pi_F\otimes_{j=1}^{\ell}\left(X^{(j)}-z_j\right)}_2\leq \varepsilon\sqrt{m}\right)\leq 
          (C\varepsilon{\mathcal{L}}_{C'\min\{m,n\}})^{C'\min\left\{m,n\right\}}+e^{-\frac{C''\sqrt{n}}{C_P}}.
    \]
    where the isotropic constant
        $${\mathcal{L}}_{r}:=\sup_F\mathcal{L}_{\Pi_{F}}(\mu)=\sup_F\norm{f_{\Pi_FX}(x)}_{\infty}^{1/r},$$
    where $F$ is an $r$-dimensional subspace of $\mathbb{R}^n$ and $f_{\Pi_FX}(x)$ denotes the density of the marginal distribution $\Pi_FX$. And $C_P$ denotes the maximum over the Poincar\'e constants of $X^{(1)},\cdots,X^{(\ell)}$.
\end{theorem}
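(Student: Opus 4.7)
\textbf{Proof plan for Theorem \ref{1.5}.} The strategy is to condition on $X^{(2)},\dots,X^{(\ell)}$, reducing the $\ell$-linear tensor small-ball estimate to a small-ball estimate for a linear image of the single isotropic log-concave vector $X^{(1)}$. After this linearization, the sharp $(C\varepsilon\mathcal{L}_{r'})^{r'}$ decay with $r'$ a constant fraction of $\min(m,n)$ follows from density bounds for marginals of log-concave measures; the singular values of the induced linear map are controlled by Grassmannian concentration, used through Fubini to bypass any uniform-in-$y$ bound, and the additive $e^{-C''\sqrt{n}/C_P}$ arises from Poincar\'e-type concentration on the scale factor.

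Set $Y^{(j)}:=X^{(j)}-z_j$. For fixed $y=(y_2,\dots,y_\ell)\in(\mathbb{R}^n)^{\ell-1}$, the map $L_yv:=v\otimes y_2\otimes\cdots\otimes y_\ell$ is an isometric embedding $\mathbb{R}^n\to\mathbb{R}^{n^\ell}$ scaled by $\rho_y:=\prod_{j=2}^\ell\|y_j\|_2$, so $A_y:=\Pi_F\circ L_y$ has non-zero singular values $\rho_y\sigma_k(\Pi_F|_{V_y})$, where $V_y:=\rho_y^{-1}L_y(\mathbb{R}^n)\in G_{n^\ell,n}$. Conditionally on $(Y^{(j)})_{j\ge 2}=y$, the vector $A_y(X^{(1)}-z_1)$ is a linear image of an isotropic log-concave vector; projecting onto the top-$r'$ left singular directions of $A_y$ only decreases the Euclidean norm, and on that $r'$-dim subspace the density is bounded by $\mathcal{L}_{r'}^{r'}/\prod_{k\le r'}\sigma_k(A_y)$ by the definition of $\mathcal{L}_{r'}$. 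Comparing with the volume of an $r'$-dim Euclidean ball of radius $\varepsilon\sqrt{m}$ yields the conditional bound
$$\mathbb{P}\!\left(\|A_y(X^{(1)}-z_1)\|_2\le\varepsilon\sqrt{m}\,\big|\,(Y^{(j)})_{j\ge2}=y\right)\le\left(\frac{C\varepsilon\mathcal{L}_{r'}\sqrt{m/r'}}{\sigma_{r'}(A_y)}\right)^{r'}.$$
For the scale factor, $x\mapsto\|x-z_j\|_2$ is $1$-Lipschitz with expectation $\ge\sqrt{n/2}$ (using $\mathbb{E}\|Y^{(j)}\|_2^2\ge n$ and Poincar\'e $\operatorname{Var}\le C_P^2$), and Bobkov-type exponential concentration for log-concave measures yields $\|Y^{(j)}\|_2\ge c\sqrt{n}$, hence $\rho_y\ge c^{\ell-1}n^{(\ell-1)/2}$, except on an event of probability $\le\ell e^{-c'\sqrt{n}/C_P}$. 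For the singular values, for any \emph{fixed} $V\in G_{n^\ell,n}$ and random $F\in G_{n^\ell,m}$, standard Grassmannian and random-matrix estimates (the inner-product matrix being a rescaled $m\times n$ partial orthogonal matrix) give $\sigma_{r'}(\Pi_F|_V)\ge c\sqrt{\max(m,n)}/n^{\ell/2}$ with probability $\ge 1-e^{-c\max(m,n)}$, the truncation at $r'=C'\min(m,n)$ with a sufficiently small absolute constant $C'$ being used to stay safely above the Marchenko--Pastur soft edge when $m\approx n$. Defining $\mathcal{S}_\mathcal{F}$ as those $F$ for which $\mathbb{P}_Y\bigl(\sigma_{r'}(\Pi_F|_{V_Y})<c\sqrt{\max(m,n)}/n^{\ell/2}\bigr)\le e^{-c\max(m,n)/2}$, Fubini and Markov yield $\mathbb{P}(\mathcal{S}_\mathcal{F})\ge 1-e^{-c\max(m,n)/2}$. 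On the intersection of these good events $\sigma_{r'}(A_y)\gtrsim\sqrt{m/r'}$, so the conditional estimate collapses to $(C\varepsilon\mathcal{L}_{r'})^{r'}$, and combining with the exceptional probabilities gives the claimed bound.

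The main obstacle is the singular-value lower bound for $\Pi_F|_V$ in the regime $m\approx n$: the smallest singular value of a random $m\times n$ partial orthogonal matrix has a Marchenko--Pastur soft edge near zero, so the truncation to the top constant fraction of singular values is essential, trading a constant in the exponent for robustness. A secondary technical point is ensuring the Poincar\'e concentration for the shifted scale $\|X^{(j)}-z_j\|_2$ holds uniformly in $z_j$ with the correct $C_P$-dependence; this is handled by combining the Lipschitz-mean argument above with Bobkov's exponential concentration for isotropic log-concave measures, which is what produces the $e^{-C''\sqrt{n}/C_P}$ term rather than a weaker $e^{-c\sqrt{n}}$ estimate.
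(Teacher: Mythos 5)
Your plan is correct in its architecture and reaches the stated bound, but it takes a genuinely different technical route from the paper at the two decisive steps, so let me compare. First, to control the conditional linear map you factor it as $A_y=\Pi_F\circ L_y$ and reduce everything to the principal angles between the random $F$ and the fixed $n$-dimensional subspace $V_y$; by orthogonal invariance the law of $\Pi_F|_V$ does not depend on $V$, and your Fubini--Markov definition of $\mathcal{S}_{\mathcal{F}}$ then decouples $F$ from $Y$ cleanly. This buys you something real: you only ever need a \emph{lower} bound on $\rho_y=\prod_{j\ge2}\|X^{(j)}-z_j\|_2$, so the one-sided Poincar\'e estimate suffices and you handle the shifts $z_j$ painlessly. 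The paper instead runs a net argument combined with Lipschitz concentration of $U\mapsto\|\theta_{X^{(2)},\dots,X^{(\ell)}}(U)^T\phi\|_2$ on $\mathbb{O}(n^{\ell})$, whose Lipschitz constant is $\prod_j\|X^{(j)}\|_2\,n^{0}$-dependent; this forces the two-sided truncation $\overline{X}=X\mathbf{1}_{E_t}$ via Proposition \ref{2-sided}. Your route is arguably cleaner here. Second, your conditional small-ball step (project onto the top $r'$ left singular directions, bound the marginal density by $\mathcal{L}_{r'}^{r'}/\prod_{k\le r'}\sigma_k$, compare with the ball volume) is a mild refinement of the paper's Proposition \ref{singular} and is correct.

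The one place where you assert rather than prove is exactly where the real work sits: the claim that $\sigma_{r'}(\Pi_F|_V)\ge c\sqrt{\max(m,n)}/n^{\ell/2}$ for $r'=C'\min(m,n)$ with probability $1-e^{-c\max(m,n)}$. For $m$ and $n$ separated by constant factors this follows from the net argument you sketch (and is what the paper proves in Cases 1 and 2, where the \emph{smallest} singular value already works). But in the regime $m\asymp n$ the smallest singular value degenerates, and a lower bound on the intermediate singular value $\sigma_{C'\min(m,n)}$ of an $m\times n$ corner of a Haar orthogonal matrix, with exponential failure probability, is a nontrivial random-matrix fact (Marchenko--Pastur counting plus concentration of the eigenvalue counting function, or a Rudelson--Vershynin-type intermediate singular value argument); citing it as ``standard'' leaves a genuine hole in a self-contained proof. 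The paper avoids this entirely with a more elementary device (Case 3): partition the $m\times n$ matrix $\theta$ into $O(1)$ blocks of size $C_1n\times n$, apply the well-separated case to each block, and use $\|\theta x\|_2^2=\sum_p\|\theta_p x\|_2^2$ with a union bound. You should either supply a proof of the intermediate singular value estimate or adopt this block decomposition. Two minor quantitative remarks: your per-factor bound $\|X^{(j)}-z_j\|_2\ge c\sqrt{n}$ yields $\rho_y\ge c^{\ell-1}n^{(\ell-1)/2}$ and hence a constant $C^{\ell}$ in front of $\varepsilon$ (acceptable, and consistent with Theorem \ref{1.2}, but worth stating); and the extra additive error $e^{-c\max(m,n)/2}$ from the Markov step must be absorbed into $e^{-C''\sqrt{n}/C_P}$, which holds but should be said.
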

Acknowledgments: The second named author is grateful to J.M. Landsberg and Liza Rebrova for several interesting discussions and references. The second named author is grateful to the Mathematics Department of Princeton for its hospitality, where part of this work was carried out. And we are grateful to Dan Mikluncer and Alperen A. Erg\"ur for many helpful comments.

\section{Background and Related Results}

\subsection{Motivation}
The study of small ball probabilities for random tensors is primarily inspired by the tensor decomposition problem. Tensor decomposition is an important question in many latent variable models, such as multi-view model, Gaussian mixture model (\cite{smoothed}), Hidden Markov model (\cite{Chang1996FullRO}, \cite{MosselRoch}, \cite{pmlr-v23-anandkumar12}, \cite{hsu2012spectral}) and assembly of neurons (\cite{tensordecomp}). The tensor rank of $X$ is the smallest number $r$ such that 
    $$X=\sum_{i=1}^rX_i^{(1)}\otimes\dots\otimes X_i^{(\ell)}$$
is a sum of $r$ simple tensors. For a fixed rank-$r$ tensor $X$, retrieving all the component vectors $X_i^{(j)}$'s is an NP-hard problem in the worst case scenario. One method is to apply the smoothed analysis model introduced by Spielman and Teng (see \cite{smoothedmodel}). That is to say we introduce random noises to $X_k^{(j)}$'s. The following is the smoothed analysis model used by Bhaskara et al in \cite{smoothed}. Consider random vectors 
    $$\widetilde{X}_i^{(j)}=X_i^{(j)}+G_i^{(j)}\in\mathbb{R}^n, 1\leq i\leq r, 1\leq j\leq l$$
where 
    $$\norm{X_i^{(j)}}_2\leq C,\quad G_i^{(j)}\sim N\left(0,\frac{\rho^2}{n}\mathbb{I}_n\right).$$
Here $\widetilde{X}_i^{(j)}$ is the noisy version of $X_i^{(j)}$, and we have normalized it such that in expectation its length has not been increased.
Define an $n\times r$ matrix $\widetilde{A}_j=\left[\widetilde{X}_1^{(j)},\cdots,\widetilde{X}_r^{(j)}\right]$ for $1\leq j\leq\ell$, then the Khatri-Rao product of $\widetilde{A}_j$'s is defined to be an $n^{\ell}\times r$ matrix
    $$\widetilde{A}=\widetilde{A}_1\odot\widetilde{A}_2\odot\cdots\odot\widetilde{A}_{\ell},$$
where the $i$-th column of $\widetilde{A}$ is the $n^{\ell}$-dimensional flattened vector $\otimes_{j=1}^{\ell}\widetilde{X}_k^{(i)}$. Then we can rewrite the rank-$r$ tensor $X$ using Katri-Rao product as
    $$X=\sum_{i=1}^r\left(\widetilde{A}_1\odot\cdots\odot\widetilde{A}_{\lfloor\frac{\ell-1}{2}\rfloor}\right)\otimes\left(\widetilde{A}_{\lfloor\frac{\ell-1}{2}\rfloor+1}\odot\cdots\odot\widetilde{A}_{\ell-1}\right)\otimes \widetilde{A}_{\ell}.$$
If the columns of the matrices $\left(\widetilde{A}_1\odot\cdots\odot\widetilde{A}_{\lfloor\frac{\ell-1}{2}\rfloor}\right)$ and $\left(\widetilde{A}_{\lfloor\frac{\ell-1}{2}\rfloor+1}\odot\cdots\odot\widetilde{A}_{\ell-1}\right)$ are robustly linearly independent respectively, then Bhaskara et al show that there is an efficient algorithm to retrieve all the $\widetilde{X}_i^{(j)}$'s. The algorithm is known as simultaneous diagonalization (\cite{orderthree}) or Chang's lemma (\cite{Comon1994IndependentCA}).

We want to show that columns of $\widetilde{A}$ are robustly linearly independent, or $\widetilde{A}$ is well "invertible". This is equivalent to bounding the smallest singular value of the Khatri-Rao product. And by \textsc{Lemma} \ref{smin} the smallest singular value is closely related to the orthogonal projections of a column vector onto the orthogonal complement of the rest of the column vectors. The investigation of the papers Bhaskara et al \cite{smoothed} and Anari et al \cite{tensordecomp} reduces the quantification of the algorithm (running time, probability of failure, and size of the noise compared to the size of data) to the problem of small ball probabilities for orthogonal projection of simple tensors.

\subsection{Preliminaries}\label{Preliminaries}
To study the small ball probabilities for the orthogonal projection of simple random tensors, we first need to decide what randomness we care about. In \cite{smoothed}, the authors considered Gaussian perturbations. We want to extend the independent Gaussian random vectors to independent random vectors with independent coordinates whose densities are universally bounded. Another setting we care about is when the random vectors are isotropic and log-concave.

A random vector in $\mathbb{R}^n$ is log-concave if its density $f$ is log-concave, i.e. for $x,y$ in the support of $f$ and $\theta\in(0,1)$, we have
        $$f(\theta x+(1-\theta)y)\geq f(x)^{\theta}f(y)^{1-\theta}.$$
Prékopa–Leindler inequality (see for example \cite{AGGbook1}) implies sum of log-concave random vectors is log-concave. And affine linear map of log-concave random vector is also log-concave (see \textsc{Lemma} 2.1 in \cite{dharmadhikari1988unimodality}). Examples of log-concave vectors are the Gaussians, exponentials and uniform measures on convex bodies.
A random vector in $X\in\mathbb{R}^n$ is isotropic if
        $$\mathbb{E}\left[XX^T\right]=Id.$$
Any random vector with second moments has a linear image that is isotropic.
Log-concave measure has been extensively investigated in the last decades. Concentration questions for these measures were (and still are) some of the most major open questions in asymptotic geometric analysis (see \cite{AGGbook1}, \cite{AGGbook2}, \cite{Isotropic}). Two of the most important open questions are Bourgain's hyperplane conjecture (see \cite{bourgain1986high}, \cite{bourgain1986geometry}) and Kannan Lovasz Simonovitch's Question (see \cite{kannan1995isoperimetric}). We refer to Klartag and Milman \cite{kls1} and Klartag \cite{klartag2023logarithmic} for the history of the problem and the latest developments. 

If $X$ is an isotropic log-concave random vector in $\mathbb R^{n}$ then the isotropic constant is $${\mathcal{L}}_{X} := \|f_{\mu}\|_{\infty}^{\frac{1}{n}}. $$ 
The KLS constant (or Poincar\'e constant) of $ X$ (or of $\mu$ that $X$ is distributed) is the smallest $C_P$ such that for every $f$ smooth function on $\mathbb R^{n}$ 
$$ {\rm Var}(f(X) ) \leq C_P \mathbb E \| \nabla f(X) \|_{2}^{2} . $$
Bourgain's slicing problem asks (equivalently) if $ {\mathcal{L}}_{X}$ is uniformly bounded by a constant for every $X$ isotropic log-concave (independent of the dimension). KLS question can be expressed (equivalently) as if $C_{P}$ can be uniformly bounded by a constant for every $X$ isotropic log-concave. The best up-to-date bounds are 
$$ \max\{ {\mathcal{L}_{X} } , \sqrt{C_{P}(X)} \} \leq C \sqrt{\log{n}} $$
due to Klartag \cite{klartag2023logarithmic}. It is well known that Poincar\'e inequality implies exponential concentration (see \cite{Ledoux2001TheCO}), i.e. 
for every $f: \mathbb R^{n} \rightarrow \mathbb{R}$ $1$-Lipschitz, then for $0<\varepsilon<1$,
    $$\mathbb{P}\left(\left|f(X)-\mathbb{E}\left[f(X)\right]\right|\geq t\right)\leq 2e^{-\frac{Ct}{C_P}}.$$
Estimates for large deviations and small ball probabilities for the Euclidean norm are also known as follows (see \cite{Paouris2006ConcentrationOM}, \cite{smallball}). 
\begin{theorem}\label{Paouris}
    Let $X\in\mathbb{R}^n$ be isotropic log-concave random vector and let $q\geq1$, then
        $$\left(\mathbb{E}\norm{X}_2^q\right)^{\frac{1}{q}}\leq C\left(\sqrt{n}+q\right).$$
\end{theorem}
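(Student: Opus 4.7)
The plan is to prove the equivalent large deviation estimate
$$\mathbb{P}\bigl(\|X\|_2 \geq C(\sqrt{n}+t)\bigr) \leq e^{-t}, \quad t \geq 0,$$
from which the moment bound follows via the tail formula $\mathbb{E}\|X\|_2^q = q\int_0^\infty s^{q-1}\mathbb{P}(\|X\|_2 > s)\,ds$. As a warm-up I would record that $\mathbb{E}\|X\|_2^2 = n$ by isotropicity; Borell's lemma applied to the Euclidean ball $2\sqrt{n}\,B_2^n$, which by Markov contains $X$ with probability at least $3/4$, already yields $\mathbb{P}(\|X\|_2 \geq 2s\sqrt{n}) \leq e^{-cs}$ and equivalently $I_q := (\mathbb{E}\|X\|_2^q)^{1/q} \leq C\sqrt{n}(1+q)$. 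The real task is to strip the spurious $\sqrt{n}$ from the linear-in-$q$ term.

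The principal tool is the family of $L_q$-centroid bodies $Z_q = Z_q(X)$ defined through the support function
$$h_{Z_q}(\theta) = \left(\mathbb{E}|\langle X,\theta\rangle|^q\right)^{1/q}, \qquad \theta \in S^{n-1}.$$
Isotropicity gives $Z_2 = B_2^n$, and Berwald's reverse H\"older inequality applied to the log-concave one-dimensional marginals $\langle X,\theta\rangle$ yields the dilations $Z_q \subseteq C(q/p)\,Z_p$ for $q \geq p \geq 1$. I would then rewrite the $q$-th moment of $\|X\|_2$ as a spherical integral
$$\mathbb{E}\|X\|_2^q = \frac{1}{c_{n,q}}\int_{S^{n-1}} h_{Z_q}(\theta)^q\,d\sigma(\theta), \qquad c_{n,q} \asymp \left(\frac{q}{n+q}\right)^{q/2},$$
so that $I_q$ is controlled by an $L_q(S^{n-1})$-average of the support function of $Z_q$.

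The heart of the argument, following Paouris' original approach, is the volumetric inequality $|Z_q|^{1/n} \leq C\sqrt{q/n}$ for $q \leq n$, which through a dual Sudakov / covering-number argument upgrades to the concentration statement
$$\mathbb{P}\bigl(X \notin c\sqrt{n}\,Z_q^{\circ}\bigr) \leq e^{-q}.$$
Introducing the threshold parameter $q_\ast(X) := \max\{k \leq n : I_k \leq 2\sqrt{n}\}$ and combining the above with the inclusion $Z_q \subseteq (Cq/q_\ast)\,Z_{q_\ast}$, I would obtain by a bootstrap that $q_\ast(X) \geq c\sqrt{n}$ and a doubling inequality $I_{2q} \leq C\,I_q$ valid for $q \geq q_\ast$; a short induction starting from $I_2 = \sqrt{n}$ then delivers $I_q \leq C(\sqrt{n}+q)$ for every $q \geq 1$. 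The chief obstacle is the lower bound $q_\ast \geq c\sqrt{n}$ itself: naive applications of Borell's lemma give only $q_\ast \geq c$, and closing the $\sqrt{n}$ gap requires the full geometric input from the Santal\'o-type estimate on $|Z_q|$ together with the exponential concentration it forces on the polar body $Z_q^{\circ}$.
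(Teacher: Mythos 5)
Note first that the paper does not prove Theorem \ref{Paouris}: it is stated as background and attributed to the cited references, so there is no internal argument to compare yours against. Your outline reproduces the architecture of the original proof in the cited source: the $L_q$-centroid bodies $Z_q$, the identity expressing $I_q(X)=(\mathbb{E}\norm{X}_2^q)^{1/q}$ as a normalized $L_q(S^{n-1})$-average of $h_{Z_q}$, a threshold parameter $q_*$, the reduction of the range $q\ge q_*$ to the range $q\le q_*$ via $Z_q\subseteq C(q/p)Z_p$ (equivalently, Borell's reverse H\"older inequality for the norm $\norm{\cdot}_2$), and the identification of the two hard inputs, namely $I_q\le C\sqrt{n}$ for $q\le c\sqrt{n}$ and the lower bound $q_*\ge c\sqrt{n}$. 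The routine steps you list (isotropicity giving $Z_2=B_2^n$ and $I_2=\sqrt{n}$, the warm-up via Borell's lemma, the normalization $c_{n,q}^{1/q}\asymp\sqrt{q/(n+q)}$, the final induction) are all correct.

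Judged as a proof rather than a roadmap, however, the proposal has a genuine gap exactly where you place the ``heart of the argument.'' The passage from the volume bound $|Z_q|^{1/n}\le C\sqrt{q/n}$ to the tail estimate $\mathbb{P}\left(X\notin c\sqrt{n}\,Z_q^{\circ}\right)\le e^{-q}$ is not a formal consequence of a volume bound: a body of small volume can still have large mean width, and what the argument actually requires is control of the covering numbers (equivalently, the mean width) of $Z_q$ together with the comparison of positive and negative moments $I_q\simeq I_{-q}$ on the range where $q$ does not exceed the Dvoretzky dimension $k_*(Z_q)$ of the centroid body; the dual Sudakov step is named but never executed, and the negative-moment comparison does not appear at all. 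Moreover, with your definition $q_*=\max\{k\le n: I_k\le 2\sqrt{n}\}$, the claim $q_*\ge c\sqrt{n}$ is literally the statement of the theorem on the range $q\le c\sqrt{n}$, so the ``bootstrap'' you invoke has not reduced the difficulty. You have correctly located the obstacle, but the proposal does not overcome it.
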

\begin{theorem}\label{PaourisSBP}
    Let $X\in\mathbb{R}^n$ be isotropic log-concave random vector, then for $0<\varepsilon<1$,
        $$\mathbb{P}\left(\norm{X}_2\leq\varepsilon\sqrt{n}\right)\leq\varepsilon^{c\sqrt{n}}$$
\end{theorem}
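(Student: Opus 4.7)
My plan is to reduce the small-ball bound to a lower bound on the negative moments $I_{-p}(X):=\bigl(\mathbb{E}\norm{X}_{2}^{-p}\bigr)^{-1/p}$ of the Euclidean norm, and to deduce the latter from the geometry of the $L_{q}$-centroid bodies of $X$. Markov's inequality in the form
$$\mathbb{P}(\norm{X}_{2}\le\varepsilon\sqrt{n})\le(\varepsilon\sqrt{n})^{p}\,\mathbb{E}\norm{X}_{2}^{-p}$$
applied with $p$ of order $\sqrt{n}$ reduces the theorem to the uniform lower bound
$$I_{-p}(X)\ge c\sqrt{n}\qquad\text{for every }0<p\le c'\sqrt{n}.$$
Choosing $p=c'\sqrt{n}$ then yields the claimed $\varepsilon^{c\sqrt{n}}$ bound in the regime $\varepsilon<\varepsilon_{0}$, and the remaining range is trivial after adjusting the constant in the exponent, since the probability is at most $1$.

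To prove the negative moment inequality, I would work with the centroid bodies
$$Z_{q}(X):=\Bigl\{\theta\in\mathbb{R}^{n}:\bigl(\mathbb{E}|\langle X,\theta\rangle|^{q}\bigr)^{1/q}\le 1\Bigr\},\qquad q\ge 1,$$
and their polar duals. The $L_{q}$-centroid formalism provides a dictionary between moment estimates of $\norm{X}_{2}$ and volumetric estimates of $Z_{q}(X)$: positive moments correspond to upper volumetric bounds (this is the content of \textsc{Theorem} \ref{Paouris}), while negative moments correspond to lower volumetric bounds. Using the isotropicity and log-concavity of $X$ together with convex-geometric volume inequalities (of Brascamp--Lieb / K. Ball type), one establishes $\mathrm{vol}_{n}(Z_{q}(X))^{1/n}\ge c\sqrt{q/n}$ up to $q$ of order $n$; feeding this through a polar integration together with an averaging argument on the sphere then yields $I_{-p}(X)\ge c\sqrt{n}$ for $p\le c'\sqrt{n}$, which closes the proof after substitution into the Markov step.

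\textbf{Main obstacle.} The difficult step is the control of negative moments at the scale $p$ of order $\sqrt{n}$. The naive polar identity $\norm{x}_{2}^{-p}=c_{n,p}^{-1}\int_{S^{n-1}}|\langle x,\theta\rangle|^{-p}\,\sigma(\mathrm{d}\theta)$, combined with the uniform density bound for one-dimensional marginals of isotropic log-concave vectors, only controls $I_{-p}(X)$ in the restricted range $p<1$, since the spherical integral $\int_{S^{n-1}}|\theta_{1}|^{-p}\,\sigma(\mathrm{d}\theta)$ already diverges once $p\ge 1$. Likewise, Paley--Zygmund applied to \textsc{Theorem} \ref{Paouris} yields only a constant-probability small-ball $\mathbb{P}(\norm{X}_{2}\ge c\sqrt{n})\ge c'>0$. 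Crossing the threshold $p\sim 1$ and pushing all the way up to $p$ of order $\sqrt{n}$ is what forces the full use of log-concavity through the volumetric behavior of the $Z_{q}(X)$-bodies; this is the technical core of Paouris's original argument, and the negative-moment companion of \textsc{Theorem} \ref{Paouris}.
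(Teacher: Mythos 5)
This theorem is not proved in the paper; it is quoted as background from the references \cite{Paouris2006ConcentrationOM} and \cite{smallball} and used as a black box. Your proposal reconstructs, at a high level, the argument from those references (principally Paouris's small-ball paper \cite{smallball}), and the architecture you lay out — Markov with negative moments of order $\sqrt n$, reduction to a lower bound $I_{-p}(X)\ge c\sqrt n$ for $p\lesssim\sqrt n$, and proof of that bound via the $L_q$-centroid bodies and their volume radius — is the right one. Your identification of the obstacle is also accurate: the polar integration trick only works for $p<1$, and pushing $p$ up to order $\sqrt n$ is exactly where the centroid-body machinery comes in (the precise ingredient being that for log-concave $\mu$ one has $I_{-q}(\mu)\simeq I_q(\mu)$ for all $q$ up to the threshold $q_*(\mu)$, which is of order $\sqrt n$ in the isotropic case).

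Two points worth flagging. First, a polarity slip: you define
$Z_q(X)=\{\theta:(\mathbb{E}|\langle X,\theta\rangle|^q)^{1/q}\le 1\}$,
which is the polar of the usual $L_q$-centroid body, whose support function is $h(\theta)=(\mathbb{E}|\langle X,\theta\rangle|^q)^{1/q}$. The standard body grows with $q$ and satisfies the Lutwak–Yang–Zhang lower bound $\mathrm{vol}_n^{1/n}\ge c\sqrt{q/n}$ (for $q\le n$, normalizing $Z_2=B_2^n$); your polar body shrinks with $q$, so the inequality you wrote points the wrong way for the object you defined. Second, the proposal is a plan rather than a proof: the passage from the volumetric lower bound for $Z_q$ to the negative-moment lower bound $I_{-p}\ge c\sqrt n$ is where essentially all the work in \cite{smallball} lives (it requires relating $I_{-q}$ to dual mixed volumes of $Z_q$, Santal\'o-type inequalities, and a careful treatment of the threshold $q_*$), and you leave that step entirely unfilled. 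That is acceptable as an outline and matches how the paper treats the statement, but one should be clear that the hard part has been named, not solved.
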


Here we are mostly interested in small ball probabilities. Small ball probabilities for log-concave measures have been investigated by Carbery and Wright \cite{carberywright}, Guedon \cite{guedon}, Fradelizi \cite{Fradelizi}, Nazarov, Sodin and Volberg \cite{nazarov2001local} \cite{nazarov2002geometric} and Glazer and Mikulincer \cite{GLAZER2022109639}.

\begin{theorem}
    (Carbery-Wright, see \cite{carberywright}) Let $p:\mathbb{R}^n\longrightarrow X$ be a polynomial of degree at most $d$, let $\mu$ be an isotropic log-concave measure on $\mathbb{R}^n$ and let $0\leq q\leq\infty$. Define the functional $p^\#(x)=\norm{p(x)}^{\frac{1}{d}}$.Then there exists an absolute constant $C$ independent of $p,d,\mu,n,q$ and $X$ so that for any $\alpha>0,$\\
    (a) if $n\leq q$ then 
        $$\norm{p^\#}_q\alpha^{-1}\mu\left(x\in K: p^{\#}(x)\leq\alpha\right)\leq Cn;$$
    (b) if $q\leq n$ then
        $$\norm{p^\#}_q\alpha^{-1}\mu\left(x\in K: p^{\#}(x)\leq\alpha\right) \leq C\max{(q,1)}.$$
\end{theorem}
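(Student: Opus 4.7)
My plan is to establish the Carbery--Wright inequality by reducing the $n$-dimensional statement to a one-dimensional statement on a log-concave density supported on an interval, and then applying a Remez-type polynomial anti-concentration estimate.

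After rescaling $p$ so that $\norm{p^\#}_q = 1$, the target becomes $\mu(\{p^\# \leq \alpha\}) \leq C \min(n, \max(q,1)) \alpha$. I would invoke the Lov\'asz--Simonovits localization lemma, which reduces integral inequalities of the form $\int g \, d\mu \leq 0$ for a log-concave $\mu$ to the same inequality tested on ``needles''---log-concave densities supported on line segments in $\mathbb{R}^n$. Applied to $g(x) = \mathbf{1}_{\{p^\#(x) \leq \alpha\}} - C \min(n,\max(q,1))\,\alpha\,(p^\#(x))^q$, this reduces the question to a one-dimensional problem: given a log-concave density $f$ on $[a,b]$ and a polynomial $p$ of degree $d$ on $[a,b]$ with $\int_a^b |p|^{q/d} f \, dx = 1$, show $\int_a^b \mathbf{1}_{\{|p|^{1/d} \leq \alpha\}} f \, dx \leq C \min(n, \max(q,1)) \alpha$.

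For the one-dimensional problem, I would apply the Remez inequality: if $|p| \leq \alpha^d$ on a set $E \subset [a,b]$, then $\sup_{[a,b]} |p| \leq \alpha^d \bigl(C(b-a)/|E|\bigr)^d$. Since a log-concave density on $[a,b]$ has supremum at most a constant times $1/(b-a)$, this constrains $|E|$ in terms of $\alpha$ and the $L^{q/d}(f)$-norm of $p$, yielding the linear-in-$\alpha$ bound with constant $\max(q,1)$ emerging from the competition between the $\alpha^d$ polynomial threshold and the $L^{q/d}$ normalization. The saturation at $Cn\alpha$ in the regime $q \geq n$ comes from a H\"older interpolation at the threshold $q = n$, beyond which the $L^q$ norm grows too slowly to further sharpen the estimate. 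Finally, the Banach space-valued case follows from the scalar argument because $t \mapsto \norm{p(x + t\theta)}^{1/d}$ inherits the same polynomial-of-degree-$d$ structure along each line via the triangle inequality in $X$.

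The main obstacle is the sharp quantitative Remez estimate combined with carefully tracking the dependence on $d$ and $q$ through the localization procedure. A subtle point is that the needle decomposition must interact correctly with the polynomial level sets $\{p^\# \leq \alpha\}$, which are semi-algebraic rather than convex; the output of localization is nonetheless a 1D restriction of the same polynomial, so this compatibility is automatic once one verifies that the lemma's hypotheses hold for the indicator-minus-polynomial test function $g$.
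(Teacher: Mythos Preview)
The paper does not prove this statement at all: it is quoted as a background result from Carbery and Wright \cite{carberywright} and used only as a point of comparison (to note that it gives the weaker bound $C\ell\varepsilon^{1/\ell}$ for the tensor small-ball problem). There is no ``paper's own proof'' to compare against.

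That said, your outline is broadly in the spirit of the original Carbery--Wright argument (localization to one-dimensional needles followed by a Remez-type estimate), but a few steps are underspecified or slightly off. First, the localization lemma in its standard form requires the test function to be a difference of two nonnegative functions with a homogeneity or degree constraint; your $g = \mathbf{1}_{\{p^\#\le\alpha\}} - C\min(n,\max(q,1))\alpha\,(p^\#)^q$ mixes an indicator with a power, and one must check that the needle reduction actually preserves the inequality for this particular $g$ (Carbery--Wright handle this by a more careful two-function formulation). Second, your explanation of the $Cn$ saturation via ``H\"older interpolation at $q=n$'' is not how the dependence arises; in the original argument the $n$ enters through the dimension of the needle's ambient space via the log-concavity structure, not through interpolation. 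Third, for the Banach-space-valued case, $t\mapsto \|p(x+t\theta)\|$ is not itself a polynomial, so the Remez step does not apply directly; one needs an additional reduction (e.g.\ via Hahn--Banach to scalar functionals) rather than ``the triangle inequality in $X$.''
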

\begin{theorem}\label{guedonthm}
    (Gu\'edon, see \cite{guedon}) Let $A$ be a symmetric convex body, $\mu$ a log-concave probability measure over $\mathbb{R}^n$, then for all $0<\varepsilon\leq1$
        $$\frac{\mu(\varepsilon A)}{\varepsilon}\leq2\log\left(\frac{1}{1-\mu(A)}\right).$$
\end{theorem}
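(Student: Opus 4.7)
The plan is to reduce Gu\'edon's inequality to a one-dimensional convexity statement. Let $X$ have law $\mu$ and let $\norm{\cdot}_A$ denote the Minkowski functional of the symmetric convex body $A$. Setting $Y := \norm{X}_A$ and $F(t) := \mathbb{P}(Y \leq t) = \mu(tA)$, the target inequality becomes
\[ F(\varepsilon) \leq 2\varepsilon \log\left(\frac{1}{1-F(1)}\right) \quad \text{for all } \varepsilon \in (0,1]. \]

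The key structural input I would use is that the tail $\bar{F}(t) := 1 - F(t)$ is log-concave on $[0,\infty)$. I would derive this from the classical fact that, for log-concave $\mu$ and symmetric convex $A$, the density $g$ of $Y$ on $[0,\infty)$ is itself log-concave; once this is granted, the standard one-dimensional fact that tail integrals of log-concave densities are log-concave delivers the log-concavity of $\bar{F}$.

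Setting $\psi(t) := -\log \bar{F}(t)$, the log-concavity of $\bar{F}$ together with $\bar{F}(0) = 1$ makes $\psi$ a convex function on $[0,\infty)$ with $\psi(0) = 0$, so the slope $\psi(\varepsilon)/\varepsilon$ is non-decreasing in $\varepsilon$. For $\varepsilon \in (0,1]$ this yields $\psi(\varepsilon) \leq \varepsilon \psi(1)$, equivalently $\bar{F}(\varepsilon) \geq \bar{F}(1)^\varepsilon$. Combining this with the elementary bound $1 - e^{-u} \leq u$ for $u \geq 0$, applied with $u := -\varepsilon \log \bar{F}(1)$, gives
\[ F(\varepsilon) = 1 - \bar{F}(\varepsilon) \leq 1 - \bar{F}(1)^\varepsilon \leq -\varepsilon \log \bar{F}(1) = \varepsilon \log\left(\frac{1}{1-F(1)}\right), \]
which is the claimed inequality (and in fact yields the sharper constant $1$ in place of $2$).

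The main obstacle is the rigorous justification of the log-concavity of $g$. Log-concavity of $F$ itself is immediate via Pr\'ekopa--Leindler on the convex set $\{(x,t) : x \in tA\} \subset \mathbb{R}^{n+1}$, but log-concavity of $F$ does not in general imply log-concavity of $F' = g$. The radial structure of $\mu$ must be exploited through a polar decomposition $x = r\omega$, $\omega \in \partial A$, which yields the representation $g(r) = r^{n-1} \int_{\partial A} f_\mu(r\omega)\, d\sigma_A(\omega)$; the factor $r^{n-1}$ is log-concave on $(0,\infty)$, but showing log-concavity of the angular integral is delicate because the map $(r,\omega) \mapsto r\omega$ is not affine and a direct Pr\'ekopa--Leindler argument on the joint density $f_\mu(r\omega)$ fails. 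I would therefore complete this step either by invoking Borell's results on distributions of gauges under log-concave laws, or by approximating $\mu$ by smooth log-concave measures for which the density-level identity can be verified explicitly and then passing to the limit.
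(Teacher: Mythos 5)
First, note that the paper does not prove this statement at all: it is quoted from Gu\'edon's paper, so there is no internal proof to compare against. Judged on its own terms, your argument has a genuine gap at exactly the point you flag as "the main obstacle": the density of $Y=\norm{X}_A$ is \emph{not} log-concave for a general log-concave $\mu$, and neither is the survival function $\bar F$. The theorem does not assume $\mu$ is even, and a one-dimensional example already kills the claim: take $\mu$ uniform on $[-2,1]$ and $A=[-1,1]$, so $Y=|X|$. Then $g(r)=\tfrac23$ on $[0,1]$ and $g(r)=\tfrac13$ on $(1,2]$, which is not log-concave (take points straddling $r=1$ with midpoint to the right of $1$), and $\bar F(0.9)=0.4$, $\bar F(1)=\tfrac13$, $\bar F(1.1)=0.3$, so $\bar F(1)<\sqrt{\bar F(0.9)\bar F(1.1)}=\sqrt{0.12}$. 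Hence $\psi=-\log\bar F$ is not convex and the slope inequality $\psi(\varepsilon)\le\varepsilon\psi(1)$ has no justification. There is no Borell-type theorem or smoothing argument that can rescue the lemma, because the lemma is false; the only statement of this kind that survives is Pr\'ekopa's, namely that $F(t)=\mu(tA)$ itself is log-concave in $t$, which (as you correctly observe) is not enough. The fact that your chain would deliver the constant $1$ rather than Gu\'edon's $\frac{2\varepsilon}{1+\varepsilon}$ should also have raised a flag.

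The actual route (Gu\'edon) does not pass through the distribution of the gauge at all. One works directly with set inclusions of the form $\mathbb{R}^n\setminus\varepsilon A\supseteq \lambda\left(\mathbb{R}^n\setminus A\right)+(1-\lambda)\varepsilon A$ for $\lambda=\frac{2\varepsilon}{1+\varepsilon}$ (valid by convexity and symmetry of $A$), and applies the Borell/Pr\'ekopa--Leindler form of log-concavity of $\mu$ to Minkowski combinations of the \emph{complements} of dilates. This yields essentially $1-\mu(\varepsilon A)\geq\left(1-\mu(A)\right)^{\frac{2\varepsilon}{1+\varepsilon}}$ up to factors controlled by $\mu(\varepsilon A)\le\mu(A)$, and then $1-e^{-u}\le u$ gives $\mu(\varepsilon A)\le\frac{2\varepsilon}{1+\varepsilon}\log\frac{1}{1-\mu(A)}\le 2\varepsilon\log\frac{1}{1-\mu(A)}$, which is where the factor $2$ in the statement comes from. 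If you want to salvage your one-dimensional reduction, you must replace "log-concavity of $\bar F$" by an inequality that is actually available, e.g.\ one derived from such inclusions; as written, the proof does not go through.
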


Finally we state the concentration inequalities for random orthogonal matrices (see \cite{elizabeth}). The orthogonal group $\mathbb{O}(n)$ is the group of orthogonal $n\times n$ matrices. Then there exists a unique translation invariant probability measure on $\mathbb{O}(n)$, called Haar measure. 
\begin{theorem}\label{Lipschitz}
    (See \textsc{Theorem} 5.5 in \cite{elizabeth}) Suppose $X\in\mathbb{O}(n)$ is a random orthogonal matrix distributed according to Haar measure. If $\mathcal{A}:\left(\mathbf{Mat}_{n\times n}\left(\mathbb{R}\right),\norm{\cdot}_{HS}\right)\rightarrow\left(\mathbb{R},|\cdot|\right)$ is $1$-Lipschitz with $\mathbb{E}\left[\mathcal{A}\left(X\right)\right]<\infty$, then for every $t>0$,
        $$\mathbb{P}\left(\left|\mathcal{A}(X)-\mathbb{E}\left[\mathcal{A}(X)\right]\right|\geq t\right)\leq2e^{-Cnt^2}.$$
\end{theorem}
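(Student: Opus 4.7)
The plan is to derive the Gaussian concentration inequality from a uniform lower bound on the Ricci curvature of $\mathbb{O}(n)$ via the Bakry--\'Emery machinery. First, I would view $\mathbb{O}(n)$ as a compact Riemannian submanifold of $\mathbf{Mat}_{n\times n}(\mathbb{R})$ endowed with the bi-invariant metric inherited from the Hilbert--Schmidt inner product $\langle A,B\rangle_{HS} = \mathrm{tr}(A^T B)$. Under this metric the Haar measure on $\mathbb{O}(n)$ coincides (up to normalization) with the Riemannian volume measure, so concentration of the Haar measure becomes a concentration-of-measure statement about this Riemannian manifold.

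Next, the key geometric input is a pointwise lower bound on the Ricci curvature. A direct computation on a compact semi-simple Lie group with bi-invariant metric yields $\mathrm{Ric} \geq \frac{n-1}{4}\, g$, where the linear-in-$n$ scaling comes from summing structure-constant contributions over a basis of the Lie algebra of skew-symmetric matrices. I would carry this out by picking the standard orthonormal basis $\{E_{ij}-E_{ji}\}_{i<j}$ of the tangent space at the identity and applying the formula $\mathrm{Ric}(X,X) = \tfrac{1}{4}\sum_k \left\|[X,E_k]\right\|^2$ that is valid for bi-invariant metrics on Lie groups; since $\mathbb{O}(n)$ has two connected components, one works on $\mathbf{SO}(n)$ and the result transfers to $\mathbb{O}(n)$ by invariance.

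With $\mathrm{Ric} \geq K\, g$ and $K$ of order $n$ in hand, the Bakry--\'Emery criterion gives a logarithmic Sobolev inequality for the Haar measure $\mu$:
\[
\mathrm{Ent}_\mu(f^2) \leq \frac{2}{K}\, \mathbb{E}_\mu\!\left[\left\|\nabla f\right\|_2^2\right]
\]
for every smooth $f$ on $\mathbb{O}(n)$. Herbst's argument then converts this into sub-Gaussian concentration: for every $f$ that is $1$-Lipschitz with respect to the intrinsic geodesic metric on $\mathbb{O}(n)$,
\[
\mathbb{P}\!\left(\left|f(X) - \mathbb{E} f(X)\right| \geq t\right) \leq 2 e^{-K t^2 / 2}.
\]
Finally, the intrinsic geodesic distance on $\mathbb{O}(n)$ dominates the extrinsic Hilbert--Schmidt distance (a geodesic joining two orthogonal matrices is at least as long as the straight chord), so any $\mathcal{A}$ that is $1$-Lipschitz in $\left\|\cdot\right\|_{HS}$ is automatically $1$-Lipschitz with respect to the geodesic metric, which yields the stated $2 e^{-C n t^2}$ bound.

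The main obstacle is the Ricci curvature lower bound; once the linear-in-$n$ scaling is established, the remaining steps (Bakry--\'Emery implies log-Sobolev, Herbst implies sub-Gaussian tails, intrinsic vs.\ extrinsic Lipschitz comparison) are standard black boxes. An alternative route would be to avoid Riemannian geometry and instead push the standard Gaussian concentration on $\mathbf{Mat}_{n\times n}(\mathbb{R})$ through the polar / Gram--Schmidt map $M \mapsto M(M^T M)^{-1/2}$, combined with the observation that this map is Lipschitz on a high-probability event; however, the curvature-based argument is the cleanest and yields the sharp $n t^2$ exponent without loss.
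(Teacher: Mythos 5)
The paper does not prove this result---it is cited verbatim as Theorem~5.5 in \cite{elizabeth}---and your proposal is a reconstruction of exactly the argument used there: bi-invariant metric inherited from Hilbert--Schmidt, Ricci lower bound linear in $n$, Bakry--\'Emery $\Rightarrow$ logarithmic Sobolev inequality, Herbst $\Rightarrow$ sub-Gaussian concentration, and the chord-versus-geodesic comparison to upgrade HS-Lipschitz to geodesic-Lipschitz. The skeleton is correct. A minor computational slip: on $\mathfrak{so}(n)$ with the metric normalized so the Riemannian norm agrees with $\left\|\cdot\right\|_{HS}$, the bi-invariant Ricci formula you quote gives $\mathrm{Ric}(X,X)=\tfrac{1}{4}\sum_k\left\|[X,E_k]\right\|^2=\tfrac{n-2}{4}\left\|X\right\|^2$, not $\tfrac{n-1}{4}$; this only changes the universal constant $C$.

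The genuine gap is the sentence claiming the result ``transfers to $\mathbb{O}(n)$ by invariance.'' The Ricci/log-Sobolev argument produces concentration of a Lipschitz function around its conditional mean on each connected component of $\mathbb{O}(n)$ separately; it says nothing about the global Haar mean when the two conditional means disagree. As stated for all of $\mathbb{O}(n)$ the inequality is in fact false: $\mathcal{A}(U)=\det(U)$ is $1$-Lipschitz in HS (the two components of $\mathbb{O}(n)$ are at HS distance at least $2$, and $\det$ jumps by $2$), yet $\left|\det(U)-\mathbb{E}\det(U)\right|=1$ almost surely, which is incompatible with a bound $2e^{-Cnt^2}$ for a universal $C>0$. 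Meckes states her theorem for the connected groups $\mathbf{SO}(n)$, $\mathbf{U}(n)$, etc., and handles $\mathbb{O}(n)$ by an explicit remark that one must condition on the component. You should either prove the statement on $\mathbf{SO}(n)$ only, or add the hypothesis that $\mathcal{A}$ has the same conditional mean on both components. (In the present paper this is harmless in practice: the functions $\mathcal{A}_\phi(U)$ and $\mathcal{B}_\psi(U)$ in the proof of Theorem~\ref{1.5} depend only on the first $m<n^\ell$ columns of $U$, so flipping the sign of the last column maps one component onto the other without changing the function, and the two conditional laws coincide. But that has to be said; it does not follow ``by invariance.'')
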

The following is a direct corollary of \textsc{Theorem} \ref{Lipschitz}.
\begin{corollary}\label{moment}
    Suppose $X\in\mathbb{O}(n)$ is a random orthogonal matrix distributed according to Haar measure. If $\mathcal{A}:\left(\mathbf{Mat}_{n\times n}\left(\mathbb{R}\right),\norm{\cdot}_{HS}\right)\rightarrow\left(\mathbb{R},|\cdot|\right)$ is $1$-Lipschitz with $\mathbb{E}\left[\mathcal{A}\left(X\right)\right]<\infty$, then for every $q\geq1$,
        $$\mathbb{E}\left|\mathcal{A}(X)-\mathbb{E}\left[\mathcal{A}(X)\right]\right|^q\leq\left(\frac{Cq}{n}\right)^{\frac{q}{2}},$$
    and
        $$\mathbb{E}\left[\mathcal{A}\left(X\right)^2\right]\leq\left(\mathbb{E}\left[\mathcal{A}(X)\right]\right)^2+\frac{C}{n}.$$
    where $C>1$ is universal constant.
\end{corollary}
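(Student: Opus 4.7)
The plan is to deduce both moment bounds from the sub-Gaussian tail estimate in \textsc{Theorem} \ref{Lipschitz} by means of the standard layer-cake identity. Setting $Y := \mathcal{A}(X) - \mathbb{E}[\mathcal{A}(X)]$, \textsc{Theorem} \ref{Lipschitz} gives $\mathbb{P}(|Y| \geq t) \leq 2e^{-Cnt^2}$ for every $t > 0$.

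For the first inequality, I would write
\[
    \mathbb{E}|Y|^q = \int_0^\infty q t^{q-1} \mathbb{P}(|Y| \geq t)\,\dd{t} \leq 2q \int_0^\infty t^{q-1} e^{-Cnt^2}\,\dd{t},
\]
and then perform the substitution $u = Cnt^2$ to reduce the integral to a Gamma function. Explicitly this yields
\[
    \mathbb{E}|Y|^q \leq \frac{q\,\Gamma(q/2)}{(Cn)^{q/2}}.
\]
Applying Stirling's bound $q\,\Gamma(q/2) \leq (C'q)^{q/2}$ (valid for all $q \geq 1$ after possibly enlarging the constant), I obtain $\mathbb{E}|Y|^q \leq (C''q/n)^{q/2}$, which is the stated estimate.

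For the second inequality, I would simply specialize to $q = 2$. The first bound then gives $\mathbb{E}[(\mathcal{A}(X) - \mathbb{E}\mathcal{A}(X))^2] \leq C/n$, and expanding the square via
\[
    \mathbb{E}[\mathcal{A}(X)^2] = (\mathbb{E}[\mathcal{A}(X)])^2 + \mathbb{E}[(\mathcal{A}(X) - \mathbb{E}\mathcal{A}(X))^2]
\]
yields the claimed estimate after absorbing constants. No real obstacle arises here beyond being careful with the Stirling-type bound on $\Gamma(q/2)$ uniformly in $q \geq 1$; this is routine and standard, so the entire argument is essentially a bookkeeping exercise converting sub-Gaussian concentration into moment control.
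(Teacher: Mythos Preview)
Your proposal is correct and is precisely the intended argument: the paper states the corollary as a direct consequence of \textsc{Theorem} \ref{Lipschitz} without giving further details, and the standard layer-cake computation you outline (tail bound $\Rightarrow$ Gamma integral $\Rightarrow$ Stirling) is exactly how one fills this in. There is nothing to add.
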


\subsection{Stochastic Dominance}
We will use stochastic dominance to prove the anti-concentration of simple random tensors when the component vectors are independent with independent coordinates whose densities are universally bounded. We introduce the following notions and definitions from \cite{lieb2001analysis}, \cite{Burchard2009ASC}, and \cite{randomized}.

\begin{definition}
    Let $A$ be a Borel subset of $\mathbb{R}^n$ with finite Lebsegue measure. Then the symmetric rearrangement $A^*$ of $A$ is the open ball centered at the origin, whose volume is equal to the measure of $A$. 
    Let $f:\mathbb{R}^n\rightarrow \mathbb{R}_+$ be a Borel measurable function such that $\{x\in \mathbb{R}^n:f(x)>t\}$ has finite measure for every $t>0$. Then the symmetric decreasing rearrangement $f^*$ of $f$ is defined by
        $$f^*(x)=\int_0^{\infty}\mathbf{1}_{\{f>t\}^*}(x)dt.$$
\end{definition}

\begin{definition}
    Let $X_1$, $X_2$ be random vectors in $\mathbb{R}^n$. Then we say that $X_1$ is less peaked than $X_2$, or $X_1$ is stochasically dominated by $X_2$, denoted by
        $$X_1\prec X_2$$
    if
        $$\mathbb{P}(X_1\in K)\leq \mathbb{P}(X_2\in K)$$
    for any symmetric convex body $K\subset\mathbb{R}^n.$
\end{definition}

\begin{lemma}
    Suppose X is a random variable and  $f:\mathbb{R}\longrightarrow\mathbb{R}_+$ is its density, such that
        $$\norm{f}_1=1, \norm{f}_{\infty}\leq1,$$
    and $f$ is even, then
        $$X^*\prec\mathbf{1}_{\left[-\frac{1}{2},\frac{1}{2}\right]}.$$
    Here by an abuse of notation, we let $1_{\left[-\frac{1}{2},\frac{1}{2}\right]}$ denotes the uniform distribution on $\left[-\frac{1}{2},\frac{1}{2}\right]$.
\end{lemma}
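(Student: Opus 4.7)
The plan is to reduce the statement to the one-dimensional fact that every symmetric convex body in $\mathbb{R}$ is an interval of the form $[-t,t]$ with $t \geq 0$, and then exploit that the symmetric decreasing rearrangement preserves $L^1$ and $L^\infty$ norms.

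First I would record that $X^*$ denotes the random variable with density $f^*$, the symmetric decreasing rearrangement of $f$. Two standard facts about rearrangement (see e.g.\ \cite{lieb2001analysis}) are
$$\norm{f^*}_1 = \norm{f}_1 = 1 \quad \text{and} \quad \norm{f^*}_\infty = \norm{f}_\infty \leq 1.$$
The first is the equimeasurability of $f$ and $f^*$; the second follows from the fact that $\{f^* > t\} = \{f > t\}^*$ is nonempty iff $\{f > t\}$ is, combined with the layer-cake representation of $f^*$.

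Next, I would note that in $\mathbb{R}$ every symmetric convex body is an interval $[-t,t]$ with $t \geq 0$, so verifying $X^* \prec \mathbf{1}_{[-1/2,1/2]}$ amounts to checking, for every $t \geq 0$,
$$\mathbb{P}(X^* \in [-t,t]) \;=\; \int_{-t}^{t} f^*(x)\,\mathrm{d}x \;\leq\; \min(2t,1) \;=\; \mathbb{P}\!\left(\mathbf{1}_{[-1/2,1/2]} \in [-t,t]\right).$$
For $t \leq 1/2$, the bound $\int_{-t}^{t} f^*(x)\,\mathrm{d}x \leq \int_{-t}^{t} \norm{f^*}_\infty \,\mathrm{d}x \leq 2t$ is immediate from $\norm{f^*}_\infty \leq 1$. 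For $t > 1/2$, the bound $\int_{-t}^{t} f^*(x)\,\mathrm{d}x \leq \int_{\mathbb{R}} f^*(x)\,\mathrm{d}x = 1$ is immediate from $\norm{f^*}_1 = 1$.

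There is no real obstacle here; the only subtlety is remembering that stochastic domination in the sense of the preceding definition requires checking all symmetric convex bodies, which in $\mathbb{R}$ collapses to the one-parameter family of centered intervals. The evenness hypothesis on $f$ is not needed for this argument (since $f^*$ is automatically even), but is natural in view of the fact that the lemma will later be combined, factor by factor, with a tensorization step for which both sides should already be centrally symmetric.
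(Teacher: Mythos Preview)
Your proof is correct. The paper does not actually supply a proof of this lemma; it is stated as a standard fact, and the subsequent Remark merely records that $\norm{f}_p = \norm{f^*}_p$ for all $p$ (the equimeasurability you invoke for $p=1,\infty$) and that evenness of $f$ is unnecessary since $f^*$ is automatically even. Your argument---reducing to centered intervals and splitting into $t\le 1/2$ and $t>1/2$---is exactly the natural elementary verification, and your closing observation about the redundancy of the evenness hypothesis coincides with the paper's Remark.
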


\begin{remark}
    Here the evenness of $f$ is not necessary. In fact, suppose $f$ satisfies that
        $$\norm{f}_1=1, \norm{f}_{\infty}\leq1,$$
    and note that 
        $$\norm{f}_p=\norm{f^*}_p,\quad 1\leq p\leq\infty,$$
    and $f^*$ is even, then
        $$f^{**}=f^*\prec\mathbf{1}_{\mathbb{Q}}.$$
\end{remark}

\begin{theorem}\label{BLL}
    (Rogers-Brascamp-Lieb-Luttinger inequality, see \cite{Rogers} \cite{BLL}) Let $\mu$ be a quasi-concave measure in $\mathbb{R}^n$ supported in a symmetric convex set $K$. Let $u_1,\dots,u_m$ be non-zero vectors in $\mathbb{R}^n$. Let $f_1,\dots,f_m$ be measurable non-negative integrable functions on $\mathbb{R}$. Then
        $$\int_{\mathcal{R}^n}\prod_{i=1}^mf_i(\langle x,u_i\rangle)d\mu(x)\leq\int_{\mathcal{R}^n}\prod_{i=1}^mf_i^*(\langle x,u_i\rangle)d\mu(x).$$
\end{theorem}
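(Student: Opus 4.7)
The plan is to reduce the statement to the classical Brascamp--Lieb--Luttinger inequality for Lebesgue measure on $\mathbb{R}^{n}$ by a two-step layer-cake and polytope-approximation argument. Since $\mu$ is quasi-concave and symmetric with support in a symmetric convex body, its density $g$ has symmetric convex super-level sets, so the layer-cake formula gives
\begin{equation*}
g(x) = \int_{0}^{\infty} \mathbf{1}_{L_t}(x)\, dt, \qquad L_t := \{y : g(y) \geq t\},
\end{equation*}
with each $L_t$ a symmetric convex body. Tonelli lets me split the integral along $t$, so it suffices to prove
\begin{equation*}
\int_{L} \prod_{i=1}^{m} f_i(\langle x, u_i\rangle)\, dx \;\leq\; \int_{L} \prod_{i=1}^{m} f_i^{*}(\langle x, u_i\rangle)\, dx
\end{equation*}
for an arbitrary symmetric convex body $L \subset \mathbb{R}^{n}$.

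To handle the restriction to $L$, I would approximate $L$ by circumscribed symmetric polytopes. Fix a countable dense sequence $\{w_j\}_{j \geq 1} \subset S^{n-1}$ and set
\begin{equation*}
L_N := \bigcap_{j=1}^{N} \{x \in \mathbb{R}^{n} : |\langle x, w_j\rangle| \leq h_L(w_j)\},
\end{equation*}
where $h_L$ is the support function of $L$. Then $L_N$ is a symmetric convex polytope and $L_N \downarrow L$. The key point is that $\mathbf{1}_{L_N}$ factors into a product of one-dimensional symmetric-slab indicators,
\begin{equation*}
\mathbf{1}_{L_N}(x) = \prod_{j=1}^{N} g_j(\langle x, w_j\rangle), \qquad g_j := \mathbf{1}_{[-h_L(w_j),\, h_L(w_j)]},
\end{equation*}
and each $g_j$ is already its own symmetric-decreasing rearrangement, so $g_j^{*} = g_j$.

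Now I apply the classical Brascamp--Lieb--Luttinger inequality on $\mathbb{R}^{n}$ to the augmented family of $m + N$ linear forms $(u_1,\dots,u_m,w_1,\dots,w_N)$ with functions $(f_1,\dots,f_m,g_1,\dots,g_N)$, obtaining
\begin{equation*}
\int_{L_N} \prod_{i=1}^{m} f_i(\langle x, u_i\rangle)\, dx \;\leq\; \int_{\mathbb{R}^{n}} \prod_{j=1}^{N} g_j^{*}(\langle x, w_j\rangle) \prod_{i=1}^{m} f_i^{*}(\langle x, u_i\rangle)\, dx \;=\; \int_{L_N} \prod_{i=1}^{m} f_i^{*}(\langle x, u_i\rangle)\, dx,
\end{equation*}
where the last equality uses $g_j^{*}=g_j$ to recompose the indicator of $L_N$. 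Letting $N \to \infty$ and invoking monotone convergence transfers the inequality from $L_N$ to $L$, and integrating out the layer-cake parameter $t$ delivers the full theorem for a general symmetric quasi-concave $\mu$.

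The main obstacle is essentially bookkeeping with the limit exchanges rather than any novel inequality. The Fubini/layer-cake step is immediate by non-negativity; the passage $N \to \infty$ uses $\mathbf{1}_{L_N} \downarrow \mathbf{1}_{L}$ pointwise, together with the hypothesis that each $f_i$ is integrable (so each $f_i^{*}$ is integrable, and hence the integrand on the right-hand side is dominated by an integrable function on the bounded set $L_1$). Conceptually, the entire argument is the observation that symmetric slabs are fixed by symmetric-decreasing rearrangement, so a symmetric convex body can be inserted as an additional ``weight'' into the classical BLL inequality without altering the right-hand side.
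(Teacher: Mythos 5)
The paper does not prove \textsc{Theorem} \ref{BLL}; it cites it to Rogers and to Brascamp--Lieb--Luttinger, so there is no internal proof to compare against. What you have written is a derivation of the quasi-concave-weight version from the classical Lebesgue-measure BLL inequality, and that derivation is correct and is the standard one: layer-cake over super-level sets of the density to reduce to Lebesgue measure on a symmetric convex body $L$, then absorb $\mathbf 1_L$ into the classical inequality by approximating $L$ with circumscribed symmetric polytopes, whose indicators factor into one-dimensional symmetric slabs fixed by symmetric decreasing rearrangement. The Tonelli step in $t$ and the passage $L_N\downarrow L$ via monotone/dominated convergence are handled appropriately (one should observe that if $\int_{L_N}\prod_i f_i^{*}$ is infinite for all $N$ the inequality is vacuous, and otherwise $\prod_i f_i^{*}\cdot\mathbf 1_{L_{N_0}}$ dominates for $N\geq N_0$).

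One thing worth flagging explicitly: you silently strengthen the hypothesis to ``$\mu$ is quasi-concave \emph{and symmetric},'' while the statement in the paper only asks that the support lie in a symmetric convex set. Your strengthening is in fact necessary for the result to be true, and your proof genuinely uses it: without evenness of the density the super-level sets $L_t$ need not be origin-symmetric, and the slab indicators $g_j$ would not be their own rearrangements. A one-dimensional example where the theorem fails as literally stated is $g=\mathbf 1_{[0,1]}$ (quasi-concave, supported in the symmetric set $[-1,1]$), $f_1=\mathbf 1_{[1/2,1]}$, $u_1=1$: the left side is $1/2$ and the right side is $\int_0^1\mathbf 1_{[-1/4,1/4]}=1/4$. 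So you have corrected a loosely phrased hypothesis rather than introduced a gap. A minor additional remark for completeness: the layer-cake step should note that each $L_t$ for $t>0$ is bounded, which follows since $g$ is integrable with convex super-level sets, so that $L_t$ is genuinely a convex body to which the polytope approximation applies.
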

A function $f$ is unimodal if f it is the increasing limit of a sequence of functions of the form,
    $$\sum_{i=1}^mt_i\mathbf{1}_{K_i}$$
where $t_i>0$ and $K_i$ are symmetric convex bodies in $\mathbb{R}^n$. For every integrable function $f:\mathbb{R}^n\rightarrow\mathbb{R}$, its symmetric decreasing rearrangement $f^*$ is unimodal.
\begin{theorem}\label{Kanter}
    (Kanter, see \cite{unimodality}) Let $f_1$, $g_1$ be functions on $\mathbb{R}^{n_1}$ such that $f_1\prec g_1$ and $f$ a unimodal function on $\mathbb{R}^{n_2}$, then
        $$ff_1\prec fg_1.$$
    In particular. if $f_i, g_i$ are unimodal functions on $\mathbb{R}^{n_i}, i=1,\cdots,M$ and $f_i\prec g_i$ for all $i$, then
        $$\prod_{i=1}^Mf_i\prec\prod_{i=1}^Mg_i.$$
\end{theorem}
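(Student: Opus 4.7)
The plan is to reduce the first assertion to the case where $f$ is the indicator of a single symmetric convex body, and then invoke Brunn--Minkowski to identify the resulting $y$-marginal as another unimodal function. By the paper's definition of unimodal, write $f$ as the pointwise increasing limit of $\sum_i t_i \mathbf{1}_{K_i}$ with $K_i \subset \mathbb{R}^{n_2}$ symmetric convex bodies. By linearity of the integral in $f$ and monotone convergence, the claim $ff_1 \prec fg_1$ reduces to showing
\[
\int_K \mathbf{1}_{K_0}(x) f_1(y)\, dx\, dy \leq \int_K \mathbf{1}_{K_0}(x) g_1(y)\, dx\, dy
\]
for every symmetric convex body $K \subset \mathbb{R}^{n_1+n_2}$ and every symmetric convex body $K_0 \subset \mathbb{R}^{n_2}$.

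Fix such $K$ and $K_0$, and set $K_y := \{x \in \mathbb{R}^{n_2} : (x,y) \in K\}$. Fubini rewrites the inequality as $\int h(y) f_1(y)\, dy \leq \int h(y) g_1(y)\, dy$, where $h(y) := |K_0 \cap K_y|$. The heart of the argument is to show that $h$ is itself unimodal on $\mathbb{R}^{n_1}$. Symmetry of $K$ gives $K_{-y} = -K_y$, and symmetry of $K_0$ then gives $K_0 \cap K_{-y} = -(K_0 \cap K_y)$, so $h$ is even. For $\lambda \in (0,1)$, convexity of $K$ yields $K_{\lambda y_1 + (1-\lambda) y_2} \supset \lambda K_{y_1} + (1-\lambda) K_{y_2}$, and combining with $K_0 = \lambda K_0 + (1-\lambda) K_0$ gives
\[
K_0 \cap K_{\lambda y_1 + (1-\lambda) y_2} \supset \lambda(K_0 \cap K_{y_1}) + (1-\lambda)(K_0 \cap K_{y_2}).
\]
By Brunn--Minkowski, $h^{1/n_2}$ is concave on $\{h>0\}$, so every superlevel set $\{h \geq t\}$ is a symmetric convex body. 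Hence $h$ is unimodal: writing $h = \lim_N \sum_j s_j^{(N)} \mathbf{1}_{L_j^{(N)}}$ as an increasing limit with $L_j^{(N)}$ symmetric convex, the hypothesis $f_1 \prec g_1$ applied on each $L_j^{(N)}$ and monotone convergence deliver the desired inequality.

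For the product form, induct on $M$. The product of two unimodal functions is unimodal (approximate by $\sum_{j,k} t_j s_k \mathbf{1}_{K_j \times L_k}$, where $K_j \times L_k$ is symmetric convex in the product space), so assuming $\prod_{i<M} f_i \prec \prod_{i<M} g_i$, applying the first part with the unimodal factor $f_M$ gives $f_M \prod_{i<M} f_i \prec f_M \prod_{i<M} g_i$, and applying it again with the unimodal factor $\prod_{i<M} g_i$ together with $f_M \prec g_M$ gives $f_M \prod_{i<M} g_i \prec g_M \prod_{i<M} g_i$. Chaining yields $\prod_{i=1}^M f_i \prec \prod_{i=1}^M g_i$.

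The main obstacle is the unimodality of the marginal $h$: this is precisely where the convexity of $K$ enters essentially, through Brunn--Minkowski, and without both the symmetry of $K_0$ (for evenness) and the inclusion $K_0 = \lambda K_0 + (1-\lambda) K_0$ the Brunn--Minkowski step would not close. Once $h$ is known to be unimodal, approximation by indicators of symmetric convex sets and the very definition of peakedness finish the proof; the iterated version is then purely formal induction.
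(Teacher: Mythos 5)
Your argument is correct and follows the same line as Kanter's original proof in the cited reference (the paper itself only cites \cite{unimodality} and does not reprove this): reduce $f$ to an indicator of a symmetric convex body, pass to sections $K_y$ of a fixed symmetric convex body $K$, and show the marginal $h(y)=|K_0\cap K_y|$ is unimodal via Brunn--Minkowski and the evenness from central symmetry, after which the layer-cake representation of $h$ together with $f_1\prec g_1$ closes the inequality. The reduction of the product case to repeated applications of the two-factor case, using that a product of unimodal functions is unimodal and that $\prec$ is transitive, is also the standard route.
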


\subsection{Related Results}
Since there is no good way to characterize the span of $m$ simple random tensors, most known results consider a general $m$-dimensional space (see \cite{smoothed}, \cite{tensordecomp}). Let $F$ be an  $m$-dimensional space in $\mathbb{R}^{n^{\ell}}$. To study the small ball behavior of $\Pi_F\otimes_{j=1}^{\ell}X^{(j)}$, we choose $f^{(1)},\cdots,f^{(m)}$ to be an orthonormal basis for $F$. Then
\begin{align*}
    \norm{\Pi_FX^{(1)}\otimes\dots\otimes X^{(\ell)}}_2^2&=\sum_{k=1}^m\left|\left\langle X^{(1)}\otimes\dots\otimes X^{(\ell)},f^{(k)}\right\rangle_F\right|^2\\
    &=\sum_{k=1}^m\left|\sum_{i_1,\dots,i_{\ell}}X_{i_1}^{(1)}\dots X_{i_{\ell}}^{(\ell)}f_{i_1\dots i_{\ell}}^k\right|^2.
\end{align*}

Note that $p=||\Pi_FX^{(1)}\otimes\dots X^{(\ell)}||_2^2$ is a polynomial of degree $2\ell$. By Carbery-Wright inequality, we have
    $$\mathbb{P}\left(\norm{\Pi_FX^{(1)}\otimes\dots\otimes X^{(\ell)}}_2<\varepsilon\sqrt{m}\right)\leq C\ell\varepsilon^{1/l}.$$

The following result is from Anari et al. It applies to a very broad setting of random vectors including discrete ones. The result reads as follows (see \textsc{Lemma} 6 in \cite{tensordecomp}).

\begin{theorem}
    A random vector $X\in\mathbb{R}^n, 1\leq i\leq\ell$ is drawn from $(\varepsilon,p)$-nondeterministic distribution if for every $j\in[n]$ and any interval of the form $(t-\varepsilon,t+\varepsilon)$, we have
        $$\mathbb{P}\left(X_j\in(t-\delta,t-\delta)\mid X_{-j}\right)\leq p,$$
    where $X_j$ represents the projection of $X$ onto the coordinates $[n]-{j}$. Assume the $n$-dimensional vectors $X^{(1)},\dots,X^{(\ell)}$ are drawn according to an $(\varepsilon,p)$-nondeterministic distribution. Further assume that $V\subset\mathbb{R}^{n^{\otimes l}}$ is a subspace of dimension at most $(cn)^{\ell}$. Then
        $$\mathbb{P}\left(dist\left(X^{(1)}\otimes\dots\otimes X^{(\ell)},V\right)<\left(\frac{\varepsilon}{\sqrt{n}}\right)^{\ell}\right)\leq2n^{\ell-1}p^{(1-c)n}.$$
\end{theorem}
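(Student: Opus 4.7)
I propose to prove the theorem by induction on $\ell$. The $\ell=1$ case will be a coordinate-pivoting argument that invokes the $(\varepsilon,p)$-nondeterministic property directly, and the inductive step will condition on the last factor and reduce to the base case via a slicing of $V$.

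\textbf{Base case ($\ell = 1$).} Let $V\subset\mathbb R^n$ have dimension $d\leq cn$. I will build iteratively unit vectors $a_1,\ldots,a_{n-d}\in V^\perp$ together with distinct pivot indices $j_1,\ldots,j_{n-d}\in[n]$ as follows: at step $i$, pick $a_i$ in $V^\perp\cap\operatorname{span}\{e_{j_1},\ldots,e_{j_{i-1}}\}^\perp$, which has dimension at least $n-d-i+1\geq 1$, and take $j_i$ to be its coordinate of largest magnitude, so $|a_i(j_i)|\geq 1/\sqrt n$ by averaging and $a_i$ is supported in $[n]\setminus\{j_1,\ldots,j_{i-1}\}$. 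The event $\operatorname{dist}(X,V)<\varepsilon/\sqrt n$ implies $|\langle a_i,X\rangle|<\varepsilon/\sqrt n$ for all $i$. Conditioning on the non-pivot coordinates of $X$ and processing $i=n-d,\ldots,1$ in reverse, the $i$-th constraint reduces to $|a_i(j_i)X_{j_i}+c_i|<\varepsilon/\sqrt n$, so $X_{j_i}$ lies in an interval of length at most $2\varepsilon/(\sqrt n\,|a_i(j_i)|)\leq 2\varepsilon$. The $(\varepsilon,p)$-nondeterministic hypothesis then gives conditional probability at most $p$ at each step, and the tower rule yields $\mathbb P(\operatorname{dist}(X,V)<\varepsilon/\sqrt n)\leq p^{n-d}\leq p^{(1-c)n}$.

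\textbf{Inductive step.} Write $\bigotimes_j X^{(j)}=u\otimes X^{(\ell)}$ with $u:=X^{(1)}\otimes\cdots\otimes X^{(\ell-1)}\in\mathbb R^{n^{\ell-1}}$ and condition on $u$. The event $\operatorname{dist}(u\otimes X^{(\ell)},V)<\eta$, with $\eta=(\varepsilon/\sqrt n)^\ell$, forces $|\langle a\cdot u,X^{(\ell)}\rangle|<\eta$ for every unit $a\in V^\perp$, where $a\cdot u\in\mathbb R^n$ denotes the contraction of $a$ with $u$ over the first $\ell-1$ modes. Equivalently, $X^{(\ell)}$ lies within $O(\eta/\|u\|)$ of the subspace $V_u\subset\mathbb R^n$ orthogonal to the image $W_u:=\{a\cdot u:a\in V^\perp\}$. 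On the ``good-$u$'' event that $\|u\|\gtrsim n^{(\ell-1)/2}$ (so that $\eta/\|u\|\leq\varepsilon/\sqrt n$ and the base-case pivoting applies) and $\dim V_u\leq cn$, applying the base case to $V_u\subset\mathbb R^n$ with $X^{(\ell)}$ yields conditional probability at most $p^{n-\dim V_u}\leq p^{(1-c)n}$.

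\textbf{Main obstacle.} The principal difficulty will be controlling the ``bad-$u$'' event $\dim V_u>cn$, which says that the rank-one slice $u\otimes\mathbb R^n\subset\mathbb R^{n^\ell}$ intersects $V$ in a subspace of unusually large dimension. A dimension count indicates this is a positive-codimension event on $u$, since $\dim V+n-n^\ell\leq(cn)^\ell+n-n^\ell<0$ when $c<1$; so the bad-$u$ set lies in a proper discriminant subvariety of $\mathbb R^{n^{\ell-1}}$. To bound the probability of landing there I would invoke the inductive hypothesis at level $\ell-1$: each ``missing'' direction in $W_u$ corresponds to some nonzero $y\in\mathbb R^n$ with $u\otimes y\in V$, i.e., to a small-ball event $u\in V_y'$ for the pull-back subspace $V_y':=\{u':u'\otimes y\in V\}\subset\mathbb R^{n^{\ell-1}}$. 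Choosing $c$ sufficiently small at the outset, the tensor structure forces $\dim V_y'\leq(c'n)^{\ell-1}$ for some $c'<1$, so the induction bounds each such event by $2n^{\ell-2}p^{(1-c')n}$; a union bound over $O(n)$ candidate directions $y$ (e.g.\ a net on the sphere, or the top spectral directions of an appropriate quadratic form built from $V$) produces the extra factor of $n$, closing the recursion at $2n^{\ell-1}p^{(1-c)n}$. The delicate bookkeeping of the constants $c$ and $c'$ across the $\ell-1$ inductive passes, together with the precise integration over the good and bad regions of $u$, is the technical heart of the argument.
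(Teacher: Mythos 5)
This theorem is quoted in the paper from Anari et al.\ (Lemma~6 in \cite{tensordecomp}); the paper itself offers no proof, so there is nothing in-paper to compare against, and your proposal must be judged on its own terms.

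Your base case is essentially correct. Row-reducing $V^{\perp}$ into an echelon-type basis $a_1,\dots,a_{n-d}$ in which $a_i$ vanishes on the previous pivot coordinates $j_1,\dots,j_{i-1}$, noting $|a_i(j_i)|\geq 1/\sqrt{n}$, and peeling off one pivot at a time with the tower rule is the right argument and gives $p^{n-d}\leq p^{(1-c)n}$. (The conditioning should run forward, $i=1,2,\dots$: for $i'>i$, $a_{i'}$ is supported away from $j_i$, so the events $A_{i'}$ are $X_{-j_i}$-measurable and $\mathbf{1}_{A_{i'}}$ can be pulled outside $\mathbb{E}[\,\cdot\mid X_{-j_i}]$. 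Your ``in reverse'' order does not factor, but this is a cosmetic fix.)

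The inductive step has a genuine gap. The assertion that $|\langle a\cdot u,X^{(\ell)}\rangle|<\eta$ for every unit $a\in V^{\perp}$ is ``equivalently, $X^{(\ell)}$ lies within $O(\eta/\|u\|)$ of $V_u$'' is false. The set you actually get is the polar of the ellipsoid $\{a\cdot u: a\in V^{\perp},\ \|a\|\leq 1\}$, whose semi-axes are the singular values of the contraction operator $T_u:V^{\perp}\to\mathbb{R}^n$, $T_u(a)=a\cdot u$, and these are not comparable to $\|u\|$: the kernel $V^{\perp}\cap(u\otimes\mathbb{R}^n)^{\perp}$ has dimension at least $n^{\ell}-d-n>0$, so many unit $a\in V^{\perp}$ give $a\cdot u=0$ exactly, and for many more $a\cdot u$ is small relative to $\|u\|$. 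Thus the constraint on $X^{(\ell)}$ is strong only along directions where $T_u$ has large singular values, and controlling the number and size of those singular values, conditionally on $u$, is itself an anti-concentration statement of the same flavor as what you are trying to prove --- you cannot simply posit a ``good-$u$'' event in which $\dim V_u\leq cn$ and $\|u\|$ is large, because the failure mode is a small-singular-value phenomenon, not a dimension jump. Your patch via pull-backs $V_y'=\{u':u'\otimes y\in V\}$ also does not close: $V_y'$ can have dimension as large as $n^{\ell-1}$ (e.g.\ when $V\supset\mathbb{R}^{n^{\ell-1}}\otimes y$), far beyond $(c'n)^{\ell-1}$, so the inductive hypothesis at level $\ell-1$ yields nothing, and the union is over a continuum of $y$'s whose associated pull-backs are not stable under a net discretization. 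A correct proof must organize the pivoting of $V^{\perp}$ globally in $[n]^{\ell}$ (a pigeonhole on the pivot multi-indices over the slices in one mode) rather than conditioning on $u$ and re-deriving a subspace in $\mathbb{R}^n$ whose relevant geometry depends delicately and unfavorably on $u$.
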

Note that if a random vector has independent coordinates with bounded densities (by $1$) then it is $(\varepsilon, c\varepsilon)$-nondeterministic for any $\varepsilon$ in $(0,1)$. Since $\norm{\Pi_FX^{(1)}\otimes\cdots\otimes X^{(\ell)}}_2=dist(X^{(1)}\otimes\dots\otimes X^{(\ell)},F^{\perp})$. Then their result under our assumptions implies the following estimate
    $$\mathbb{P}\left(\norm{\Pi_F(X^{(1)}\otimes\dots\otimes X^{(\ell)})}_2<\varepsilon\right)\leq2n^{\ell-1}\left(c\sqrt{n}\right)^{n-(n^{\ell}-m)^{1/\ell}}\varepsilon^{\frac{n-(n^{\ell}-m)^{1/\ell}}{\ell}}.$$
If $n<\ell$, or if $n\geq\ell$ and $m<n^{\ell}-(n-\ell)^{\ell}$, then $\frac{n-(n^{\ell}-m)^{1/\ell}}{\ell}<1.$

Small ball estimates can also come from concentration inequalities of simple random tensors. The following results are from Vershynin \cite{Vershynin2019ConcentrationIF} and Bamberger, Krahmer, Ward \cite{bamberger2021hansonwright}.
\begin{theorem}\label{VershyninConcentration}
    (See \textsc{Corollary} 6.1 in \cite{Vershynin2019ConcentrationIF}) Let $X^{(1)},\cdots,X^{(\ell)}\in\mathbb{R}^{n}$ be independent random vectors whose coordinates are independent, mean zero, unit variance with subgaussian norm bounded by $L\geq 1$. Suppose $F$ is a subspace in $\mathbb{R}^{n^{\ell}}$ of dimension $m$. Then for $0\leq\varepsilon\leq1$,
        $$\mathbb{P}\left(\norm{\Pi_F\otimes_{j=1}^{\ell}X^{(j)}}_2\leq\varepsilon\sqrt{m}\right)\leq 2\exp\left(-\frac{C\left(1-\varepsilon\right)^2m}{\ell n^{\ell-1}}\right).$$
\end{theorem}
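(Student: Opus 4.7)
The plan is to prove this as a concentration inequality for the quadratic tensor form $\|\Pi_F Y\|_2^2$, where $Y:=X^{(1)}\otimes\cdots\otimes X^{(\ell)}$ is viewed as a vector in $\mathbb{R}^{n^\ell}$ and $P_F$ is the orthogonal projection matrix onto $F$. The starting observation is that $\{\|\Pi_F Y\|_2\le\varepsilon\sqrt{m}\}=\{\|\Pi_F Y\|_2^2\le\varepsilon^2 m\}$, so once I establish that $\|\Pi_F Y\|_2^2$ concentrates subgaussianly around its expectation $m$ with variance proxy of order $L^{4\ell}\ell n^{\ell-1}m$, a left deviation by $(1-\varepsilon^2)m\ge(1-\varepsilon)m$ will deliver the stated bound.

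First I would compute the mean. Since each $X^{(j)}$ has independent mean-zero unit-variance coordinates, $\mathbb{E}[YY^T]$ factorizes as $\bigotimes_{j=1}^{\ell}\mathbb{E}[X^{(j)}(X^{(j)})^T]=I_{n^\ell}$, hence
$$\mathbb{E}\|\Pi_F Y\|_2^2=\mathbb{E}\,\langle Y,P_FY\rangle=\operatorname{tr}(P_F)=m.$$
Next, writing $\|\Pi_F Y\|_2^2-m=\langle Y,P_FY\rangle-\operatorname{tr}(P_F)$, I would control the fluctuation through a Hanson--Wright type inequality for the polynomial chaos of multidegree $(2,\ldots,2)$ in the $\ell$ independent subgaussian blocks $X^{(1)},\ldots,X^{(\ell)}$. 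The natural strategy is a conditional-and-iterate scheme: freeze $X^{(2)},\ldots,X^{(\ell)}$, so that $\langle Y,P_FY\rangle$ becomes a quadratic form in $X^{(1)}$ whose matrix is the contraction of an appropriate flattening of $P_F$ against $(X^{(2)}\otimes\cdots\otimes X^{(\ell)})$; apply the classical matrix Hanson--Wright inequality conditionally, then iterate over the remaining factors, using at each step a moment bound of the form $(\mathbb{E}\|X^{(j)}\|_2^q)^{1/q}\le CL(\sqrt{n}+q)$ to absorb the Euclidean norms of the already-conditioned vectors. Accumulating the contributions yields
$$\mathbb{P}\!\left(\bigl|\|\Pi_F Y\|_2^2-m\bigr|\ge t\right)\le 2\exp\!\left(-c\min\!\left\{\frac{t^2}{L^{4\ell}\ell n^{\ell-1}m},\ \frac{t}{L^{2\ell}\sqrt{\ell n^{\ell-1}}}\right\}\right),$$
and choosing $t=(1-\varepsilon^2)m$ together with the inequality $(1-\varepsilon^2)^2\ge(1-\varepsilon)^2$ places us in the subgaussian branch (the subexponential branch is never binding for $\varepsilon\in[0,1]$ after adjusting the constants), giving exactly the claimed tail.

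The main obstacle is the combinatorial bookkeeping in the iterated decoupling: after each conditioning the effective matrix is a contraction of $P_F$ against an $(\ell-1)$-fold tensor product, and one has to control its Frobenius norm against $\|P_F\|_F^2=m$ while tracking how it scales under each intermediate flattening. The factor $n^{\ell-1}$ in the variance proxy arises precisely because each of the $\ell-1$ conditioning steps contributes a factor of $\mathbb{E}\|X^{(j)}\|_2^2\approx n$, and the extra $\ell$ reflects a union/trade-off over which factor is singled out. A naive application of matrix Hanson--Wright to the flattened $n^\ell$-dimensional vector $Y$, ignoring its tensor structure, would yield the much weaker scaling $n^\ell$ in place of $n^{\ell-1}$; the tensor-aware refinement is the genuine content of Vershynin's tensor Hanson--Wright inequality in \cite{Vershynin2019ConcentrationIF}, from which the stated corollary follows.
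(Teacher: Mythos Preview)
This theorem is not proved in the paper at all: it is quoted in the ``Related Results'' subsection as \textsc{Corollary}~6.1 of Vershynin \cite{Vershynin2019ConcentrationIF}, with no argument given. So there is no ``paper's own proof'' to compare your proposal against.

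That said, your outline is a faithful sketch of Vershynin's actual method: compute $\mathbb{E}\|\Pi_F Y\|_2^2=\operatorname{tr}(P_F)=m$ using isotropy of $Y$, then control the deviation of $\langle Y,P_FY\rangle$ from its mean via an iterated/decoupled Hanson--Wright argument across the $\ell$ tensor factors, which produces the variance proxy $L^{4\ell}\ell n^{\ell-1}\|P_F\|_{HS}^2=L^{4\ell}\ell n^{\ell-1}m$, and finally specialize to the left tail $t=(1-\varepsilon^2)m\ge(1-\varepsilon)m$. Your diagnosis of where the $n^{\ell-1}$ and the extra factor $\ell$ come from is correct, as is your remark that a naive Hanson--Wright on the flattened $n^\ell$-vector would lose a factor of $n$. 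One small caveat: your displayed two-sided bound has the subexponential branch scaled by $\sqrt{\ell n^{\ell-1}}$ rather than the operator-norm quantity; in Vershynin's actual inequality the subexponential term involves a different norm of the flattened $P_F$, but since for the small-ball application only the subgaussian branch matters (as you note), this does not affect the conclusion.
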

    
\begin{theorem}\label{BambergerConcentration}
    (See \textsc{Theorem} 2.1 in \cite{bamberger2021hansonwright}) Let $X^{(1)},\cdots,X^{(\ell)}\in\mathbb{R}^{n}$ be independent random vectors whose coordinates are independent, mean zero, unit variance, subgaussian random variables. Suppose $F$ is a subspace in $\mathbb{R}^{n^{\ell}}$ of dimension $m$. Then for a constant $C_{\ell}$ that only depends on $\ell$ and for $0\leq\varepsilon\leq1$,
        $$\mathbb{P}\left(\norm{\Pi_F\otimes_{j=1}^{\ell}X^{(j)}}_2\leq\varepsilon\sqrt{m}\right)\leq e^2\exp\left(-C_{\ell}\frac{\left(1-\varepsilon\right)^2m}{n^{\ell-1}}\right).$$
\end{theorem}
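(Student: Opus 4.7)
The plan is an induction on the order $\ell$, applying the Hanson--Wright quadratic-form inequality iteratively to one tensor factor at a time.

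For the base case $\ell=1$, the quantity $\norm{\Pi_F X^{(1)}}_2^2=(X^{(1)})^T\Pi_F X^{(1)}$ is a quadratic form in a subgaussian vector, with mean $m$, PSD coefficient matrix of operator norm $1$ and Hilbert--Schmidt norm $\sqrt{m}$. The classical Hanson--Wright inequality then gives directly
$$\mathbb P\bigl(\norm{\Pi_F X^{(1)}}_2^2\le\varepsilon^2 m\bigr)\le 2\exp\bigl(-c(1-\varepsilon)^2 m\bigr),$$
matching the target with $\ell=1$.

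For the inductive step, identify $\mathbb R^{n^\ell}\cong\mathbb R^n\otimes\mathbb R^{n^{\ell-1}}$ and view each orthonormal basis vector $f^{(k)}$ of $F$ as an $n\times n^{\ell-1}$ matrix $A_k$. Set $Y:=\otimes_{j=2}^\ell X^{(j)}$. A direct computation gives
$$\norm{\Pi_F\otimes_j X^{(j)}}_2^2=(X^{(1)})^T B(Y) X^{(1)},\qquad B(Y):=\sum_{k=1}^m (A_kY)(A_kY)^T,$$
so conditionally on $(X^{(2)},\ldots,X^{(\ell)})$ this is a quadratic form in $X^{(1)}$ with mean $\operatorname{tr}(B(Y))=Y^T M Y$, where $M:=\sum_k A_k^T A_k$ is PSD with $\operatorname{tr}(M)=m$ and $\|M\|_{\mathrm{op}}\le 1$. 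I would introduce a ``good event'' $\mathcal G$ on which (a) $Y^T M Y\ge m/2$ and (b) $\|B(Y)\|_{\mathrm{op}}\le C n^{\ell-1}$ and $\|B(Y)\|_F^2\le C m\, n^{\ell-1}$. Claim (a) is obtained from the inductive hypothesis applied to an orthonormal basis adapted to $M$ (after renormalization), with failure probability at most $e^2\exp(-c_{\ell-1}m/n^{\ell-2})$, well below the target rate. Claim (b) follows from subgaussian moment bounds on the tensor $Y$ combined with the structural constraints on $M$ and $\sum_k A_k A_k^T=\operatorname{tr}_{\mathrm{tail}}(\Pi_F)$ (both of trace $m$). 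On $\mathcal G$, conditional Hanson--Wright yields
$$\mathbb P\bigl((X^{(1)})^T B(Y) X^{(1)}\le\varepsilon^2 m\,\big|\,Y\bigr)\le 2\exp\bigl(-c(1-\varepsilon)^2 m/n^{\ell-1}\bigr),$$
and integrating over $Y$, with the small $\mathbb P(\mathcal G^c)$ absorbed into the constants, completes the induction. The constant $C_\ell$ degrades by a universal multiplicative factor per recursive step, so it depends only on $\ell$.

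The main obstacle is Claim (b): one needs the operator norm bound $\|B(Y)\|_{\mathrm{op}}\lesssim n^{\ell-1}$ (and the comparable Frobenius bound) with high probability in $Y$, which requires uniform control of the bilinear forms $Y^T N_v Y$ for $N_v=\sum_k A_k^T v v^T A_k$ across all unit vectors $v\in\mathbb R^n$. This can be handled either by a decoupling argument combined with a recursive application of the same Hanson--Wright scheme to $Y$, or by a net argument on the sphere in $\mathbb R^n$ together with subgaussian tail bounds for tensor products (available from \textsc{Theorem} \ref{VershyninConcentration}). Each level of the recursion contributes one dimension factor of $n$, which is what produces the correct exponent $m/n^{\ell-1}$ after $\ell-1$ steps; tracking the accumulating universal constants then gives $C_\ell$ depending only on $\ell$.
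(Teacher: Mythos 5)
This theorem is not proved in the paper: it is quoted verbatim from Theorem 2.1 of \cite{bamberger2021hansonwright} and used only as a comparison result in the ``Related Results'' section. There is therefore no in-paper proof to compare against, and your sketch must be judged against the cited work. Your high-level plan --- peel off one tensor factor at a time and apply Hanson--Wright conditionally --- is indeed in the spirit of the Bamberger--Krahmer--Ward argument, which is built around an iterated tensor Hanson--Wright inequality. But this is the main result of a substantial paper, and the sketch as written contains concrete errors and leaves the genuinely hard step unaddressed.

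The first concrete problem is the claim that $M=\sum_k A_k^T A_k$ has $\|M\|_{\mathrm{op}}\le 1$. This is false. One computes $M=\operatorname{tr}_1(\Pi_F)$, the partial trace of $\Pi_F$ over the first factor $\mathbb{R}^n$, and for a unit vector $v\in\mathbb{R}^{n^{\ell-1}}$ one has $v^TMv=\sum_{i=1}^n\|\Pi_F(e_i\otimes v)\|_2^2=\operatorname{tr}(\Pi_F Q_v)$ where $Q_v$ projects onto $\mathbb{R}^n\otimes v$; hence $\|M\|_{\mathrm{op}}\le\min\{m,n\}$, and equality is attained (take $F=\operatorname{span}\{e_1\otimes w,\ldots,e_m\otimes w\}$ to get $M=m\,ww^T$, or $F=\mathbb{R}^{n^\ell}$ to get $M=n\,I$). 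Once $\|M\|_{\mathrm{op}}$ is allowed to be as large as $\min\{m,n\}$ rather than $1$, the route you propose for Claim (a) no longer closes: decomposing $M=\sum_r\lambda_r u_ru_r^T$ and passing to the span of eigenvectors with $\lambda_r$ above a threshold, the subspace you obtain has dimension that can be much smaller than $m$ (as small as $m/\|M\|_{\mathrm{op}}$), and even when it is of order $m$ the reduction ends up requiring the inductive bound at a threshold $\varepsilon'>1$, where the hypothesis says nothing. Your quoted failure rate $e^2\exp(-c_{\ell-1}m/n^{\ell-2})$ for Claim (a) therefore does not follow from the induction, and in the rank-one example above the failure probability of $\{Y^TMY\ge m/2\}$ is a constant, not exponentially small.

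The second and deeper issue is the one you flag yourself: the high-probability operator-norm and Frobenius-norm bounds on $B(Y)=\sum_k(A_kY)(A_kY)^T$ uniformly over the randomness in $Y$. Producing those bounds with the right dependence on $n^{\ell-1}$ is precisely where the hard work lies; \textsc{Theorem} \ref{VershyninConcentration} and a net argument, as you suggest, give suboptimal constants per recursion and do not obviously reproduce the sharper constants of \cite{bamberger2021hansonwright}, whose contribution is exactly an improved dimension dependence. So the plan is a reasonable outline of the general strategy but is not a proof, and the two steps you treat as routine (Claim (a) and Claim (b)) are where the real argument has to live.
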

Since the above estimates come from concentration inequalities, they do not approach to $0$ as $\varepsilon$ approaches to $0$. Also, they only apply to the case when the dimension of the subspace $m\gg\mathcal{O}_{\ell}\left(n^{\ell-1}\right)$.
\section{Stochastic Dominance}
We are interested in tensor product of independent random vectors with independent coordinates and bounded densities. It is known that random vectors with independent coordinates whose densities are bounded by 1 are stochastically dominated by the uniform random vectors on the unit cube. In this section we will show that tensor product of random vectors preserves this property. It is well known (see for example section 4.1 in \cite{polytope}) that we can approximate any symmetric convex body by circumscribed polytopes.
\begin{lemma}\label{circumscribed}
    (See \cite{polytope}) Let $d\geq n+1$ and denote
    by $P_{d,o}^n(K)$ the set of polytopes in $\mathbb{R}^n$ that are circumscribed out of $K$ with at most $d$ vertices. Let $K\subset\mathbb{R}^n$ be a symmetric convex body, then
        $$\inf_{V\in P_{d,o}^n(K)}\left|K\backslash V\right|\leq\frac{c(K)}{d^{\frac{2}{n-1}}},$$
    where $c(K)$ is a constant that depends only on $K$.
\end{lemma}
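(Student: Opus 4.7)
I interpret the lemma in the usual inscribed sense ($V\subset K$ is the convex hull of at most $d$ boundary points of $K$), so that $|K\setminus V|$ measures the volume deficit; this is the only reading consistent with the inequality. The plan is the classical cap-covering argument of Macbeath and B\"or\"oczky. After an affine normalization, the key local object is the cap
\[
 C(x,t):=\{y\in K:\ \mathrm{dist}(y,H_x)\leq t\},\qquad x\in\partial K,
\]
where $H_x$ is a supporting hyperplane of $K$ at $x$. The standard affine-geometric estimates give that $|C(x,t)|$ is of order $t^{(n+1)/2}$, while the projection of $C(x,t)$ onto $H_x$ has $(n-1)$-dimensional content of order $t^{(n-1)/2}$; the implicit constants depend on $K$ only through the quantity $c(K)$.

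For a parameter $t>0$, the plan is to pick a maximal $t$-separated net $\{x_1,\dots,x_N\}\subset\partial K$ in the affine-invariant metric induced by the caps, so that the caps $C(x_i,t)$ cover a neighborhood of $\partial K$ inside $K$. A packing count against the surface $\partial K$ gives $N\leq c(K)\,t^{-(n-1)/2}$, so choosing $t$ of order $d^{-2/(n-1)}$ ensures $N\leq d$. Setting $V:=\mathrm{conv}\{x_1,\dots,x_N\}$, any $x\in K\setminus V$ can be separated from $V$ by a hyperplane $H$; translating $H$ to a supporting hyperplane of $K$ produces some $x_i$ at depth at most $t$, so $x\in C(x_i,t)$. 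Summing the cap volumes,
\[
 |K\setminus V|\ \leq\ \sum_{i=1}^N |C(x_i,t)|\ \leq\ C\,N\,t^{(n+1)/2}\ \leq\ C\,c(K)\,d^{-2/(n-1)},
\]
which is the desired bound.

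The main obstacle is making the cap-volume scaling and the covering step rigorous for an arbitrary symmetric convex body: the $t^{(n+1)/2}$ estimate is only the smooth/curvature asymptotic, and both the packing and covering steps presuppose uniform curvature-type control. The standard remedy is to replace the geometric caps by Macbeath regions $M(x,t)$ and to invoke the Macbeath-Ewald-Larman-Rogers doubling and covering lemmas, which recover the same scaling uniformly in $x$ up to an affine-invariant constant that is absorbed into $c(K)$. An alternative route is to first prove the estimate for the smoothed body $K_\delta:=K+\delta B_2^n$ with a constant uniform in $\delta$, and then pass to the limit $\delta\downarrow 0$ using lower semicontinuity of the optimal polytopal-approximation functional.
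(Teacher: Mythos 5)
The paper does not actually prove this lemma: it is quoted from the literature (the reference \cite{polytope}), so there is no internal proof to compare against. Judged on its own, your attempt has the right classical skeleton but two genuine problems.

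First, you have proved (a sketch of) the wrong version of the statement. You read the lemma as being about \emph{inscribed} polytopes $V\subset K$, arguing that this is "the only reading consistent with the inequality." In fact the displayed quantity $\left|K\backslash V\right|$ is evidently a typo for $\left|V\backslash K\right|$: the lemma is invoked in the proof of \textsc{Theorem} \ref{dominance1} precisely to produce \emph{outer} approximants, namely polytopes $V_n\supset K$ of the form $\bigcap_{k}\{x:|\langle x,u^k\rangle|\leq 1\}$ with $|V_n\backslash K|\to 0$; the whole argument there (writing $\mathbf{1}_K$ as a product of even indicators of slabs, then letting $K_n=\cap_{i\le n}V_i\downarrow K$ and applying monotone convergence) requires the approximation from outside. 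An inscribed convex hull of boundary points does not deliver this. The fix is either to run your net argument dually -- take the supporting slabs of $K$ at the net points and intersect them, controlling $|V\backslash K|$ by the union of the outer cap-like regions -- or, since $K$ is symmetric, to polarize: an inscribed approximant of $K^{\circ}$ with small volume deficit yields a circumscribed approximant of $K$, with the constants absorbed into $c(K)$. Either route should be stated, because as written your $V$ is not the object the paper needs.

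Second, the quantitative core of your covering step is false for a general convex body, as you yourself flag: the estimates $|C(x,t)|\asymp t^{(n+1)/2}$ and $N\lesssim t^{-(n-1)/2}$ are smooth-boundary asymptotics and fail badly, e.g., for the cube, where a cap of width $t$ at a facet has volume of order $t$. The correct uniform substitute is the economic cap covering theorem of B\'ar\'any--Larman (or B\"or\"oczky's refinement): cover the wet part $K(t)=\{x\in K:\exists\text{ cap }C\ni x,\ |C|\leq t\}$ by $N\leq C(n)|K(t)|/t$ caps, use $|K(t)|\leq c(K)t^{2/(n+1)}$, and solve $N\leq d$ for $t$; this yields $|K\backslash V|\leq c(K)d^{-2/(n-1)}$ with no curvature hypotheses. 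You name this remedy but do not carry it out, so the argument remains a plan rather than a proof. For what it is worth, the paper's application only needs the qualitative fact that $\inf_V|V\backslash K|\to 0$ as $d\to\infty$ for outer polytopes given by symmetric slabs, which is elementary and does not require the sharp exponent $2/(n-1)$ at all.
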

\begin{theorem}\label{dominance1}
    Let $X^{(j)}=\left(X_1^{(j)},\cdots,X_{n_j}^{(j)}\right)\in\mathbb{R}^{n_j}, 1\leq j\leq\ell$ be independent random vectors with independent coordinates. Then
        $$\otimes_{j=1}^{\ell} X^{(j)}\prec \otimes_{j=1}^{\ell} {X^{(j)}}^*.$$
\end{theorem}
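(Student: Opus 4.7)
The plan is to replace the vectors $X^{(j)}$ by their rearrangements $(X^{(j)})^{*}$ one index $j$ at a time, invoking the Rogers--Brascamp--Lieb--Luttinger inequality (Theorem~\ref{BLL}) at each step. In view of the hypothesis that each $X^{(j)}$ has independent coordinates with one-dimensional densities $f^{(j)}_{1},\dots,f^{(j)}_{n_{j}}$, I interpret $(X^{(j)})^{*}$ as the random vector whose independent coordinates have densities $(f^{(j)}_{i})^{*}$, the one-dimensional symmetric decreasing rearrangements of the original coordinate densities.

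First, by Lemma~\ref{circumscribed} together with continuity of measure along a decreasing sequence of convex bodies, it suffices to establish the bound for symmetric polytopes of the form
$$K=\bigcap_{i=1}^{M}\bigl\{y\in\mathbb{R}^{n_{1}\otimes\cdots\otimes n_{\ell}}\;:\;|\langle y,u^{(i)}\rangle|\leq 1\bigr\}.$$
Now I fix $j\in\{1,\dots,\ell\}$ and condition on vectors $x^{(k)}$ for $k\neq j$. I define the slice
$$K_{j}:=\bigl\{x\in\mathbb{R}^{n_{j}}\;:\;x^{(1)}\otimes\cdots\otimes x^{(j-1)}\otimes x\otimes x^{(j+1)}\otimes\cdots\otimes x^{(\ell)}\in K\bigr\}.$$
Because $K$ is symmetric and convex and because the tensor product is linear in the $j$-th argument, $K_{j}$ is a symmetric convex subset of $\mathbb{R}^{n_{j}}$. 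Explicitly, $\mathbf{1}_{K_{j}}(x)=\prod_{i=1}^{M}\mathbf{1}_{[-1,1]}(\langle x,v^{(i)}\rangle)$, where each $v^{(i)}\in\mathbb{R}^{n_{j}}$ is the contraction of $u^{(i)}$ against the fixed tensors $x^{(k)}$, $k\neq j$.

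Second, using the coordinate independence of $X^{(j)}$ to split its density as $\prod_{i=1}^{n_{j}}f^{(j)}_{i}(\langle x,e_{i}\rangle)$, I rewrite
$$\mathbb{P}(X^{(j)}\in K_{j})=\int_{\mathbb{R}^{n_{j}}}\prod_{i=1}^{M}\mathbf{1}_{[-1,1]}\bigl(\langle x,v^{(i)}\rangle\bigr)\prod_{i=1}^{n_{j}}f^{(j)}_{i}\bigl(\langle x,e_{i}\rangle\bigr)\,dx.$$
Theorem~\ref{BLL} applied with $\mu$ the Lebesgue measure on $\mathbb{R}^{n_{j}}$ (approximated from within by its restrictions to large centered balls, which are quasi-concave by Brunn--Minkowski) replaces each one-dimensional factor by its symmetric decreasing rearrangement; since $\mathbf{1}_{[-1,1]}^{*}=\mathbf{1}_{[-1,1]}$, this yields $\mathbb{P}(X^{(j)}\in K_{j})\leq\mathbb{P}((X^{(j)})^{*}\in K_{j})$. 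Integrating over the remaining coordinates via Fubini and the mutual independence of the $X^{(k)}$'s converts this conditional inequality into
$$\mathbb{P}\bigl(\otimes_{k=1}^{\ell}X^{(k)}\in K\bigr)\;\leq\;\mathbb{P}\bigl((X^{(j)})^{*}\otimes\otimes_{k\neq j}X^{(k)}\in K\bigr),$$
and iterating this one-step replacement for $j=1,2,\dots,\ell$ completes the argument.

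\textbf{Main obstacle.} The crux is to arrange the slice argument so that the body constraints $\mathbf{1}_{[-1,1]}(\langle x,v^{(i)}\rangle)$ and the coordinate density factors $f^{(j)}_{i}(\langle x,e_{i}\rangle)$ can be fed simultaneously into Rogers--BLL as one-dimensional functions of linear forms on $\mathbb{R}^{n_{j}}$; the polytope reduction in the first step is precisely what enables this unified treatment. A secondary point to verify is that Lebesgue measure on $\mathbb{R}^{n_{j}}$ is an admissible choice of $\mu$ in Theorem~\ref{BLL}, which follows by a standard truncation to large centered Euclidean balls.
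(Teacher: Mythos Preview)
Your proposal is correct and follows essentially the same route as the paper: reduce to symmetric polytopes via Lemma~\ref{circumscribed}, express the polytope constraint as a product of one-dimensional indicators $\mathbf{1}_{[-1,1]}(\langle X^{(j)},v^{(i)}\rangle)$ by contracting against the remaining factors, apply Rogers--Brascamp--Lieb--Luttinger to replace the coordinate densities $f^{(j)}_i$ by $(f^{(j)}_i)^*$ one index $j$ at a time via Fubini, and finish by a limiting argument back to general symmetric convex bodies. Your slice notation $K_j$ and the paper's $\theta^k_p$ are just two ways of packaging the same linear-in-$X^{(j)}$ observation, and your remark about truncating Lebesgue measure to large balls to fit the hypotheses of Theorem~\ref{BLL} is a point the paper leaves implicit.
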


\begin{proof}
    We will consider the tensor as a flattened vector:
        $$\otimes_{j=1}^{\ell}X^{(j)}=\left(\prod_{j=1}^{\ell}X_{i_j}^{(j)}\right)_{i_1\dots i_{\ell}}.$$

    Let $K$ be a compact and symmetric convex set in $\mathbb{R}^{n^{\ell}}$, then there exists a sequence of polytopes $\left\{K_n\right\}_{n=1}^{\infty}$ satisfying \textsc{Lemma} \ref{circumscribed}, where
        $$K_n=\bigcap_{k=1}^N\{x: |\left\langle x,u^k\right\rangle|\leq1\}$$
    and
        $$x=\otimes_{j=1}^{\ell}X^{(j)}, \quad u^k=\left(u^k_{i_1\dots i_{\ell}}\right)_{i_1\dots i_{\ell}}.$$
    Without loss of generality, we can assume that
        $$K=\bigcap_{l=1}^N\{x: |\langle x,u^k\rangle|\leq1\}.$$
    This is to say
        \begin{equation}\label{eq1}
            \otimes_{j=1}^{\ell}X^{(j)}\in K \iff \left\langle\otimes_{j=1}^{\ell}X^{(j)},u^k\right\rangle\leq1, \quad\forall 1\leq k\leq N.
        \end{equation}
    For every $p=1,\cdots,\ell$ we can write
    \begin{align}
        \langle \otimes_{j=1}^{\ell} X^{(j)},u^k\rangle & =\sum_{i_1,\dots,i_{\ell}}\left(X_{i_1}^{(1)}\cdots X_{i_{\ell}}^{(\ell)}\right)u_{i_1\dots i_{\ell}}^k\nonumber\\
        &=\sum_{i_p}X_{i_p}^{(p)}\sum_{i_1,\cdots,i_{p-1},i_{p+1},\cdots,i_{\ell}}\,\left(X_{i_1}^{(1)}\cdots X_{i_{p-1}}^{(p-1)}X_{i_{p+1}}^{(p+1)}\cdots X_{i_{\ell}}^{(\ell)}\right)u_{i_1\cdots i_{\ell}}^k\nonumber\\
        &=\left\langle X^{(p)},\theta^k_p\right\rangle\label{eq2}
    \end{align}
    where 
        $$\theta^k_p:=\theta^k_p\left(X^{(j)},j\neq p,u^k\right)=\left(\sum_{i_1,\cdots,i_{p-1},i_{p+1},\cdots,i_{\ell}}\,\left(X_{i_1}^{(1)}\cdots X_{i_{p-1}}^{(p-1)}X_{i_{p+1}}^{(p+1)}\cdots X_{i_{\ell}}^{(\ell)}\right)u_{i_1\cdots i_{p-1}i_p i_{p+1}\cdots i_{\ell}}^k\right)_{i_p=1}^{n_p}$$
    is an $n_p$-dimensional random vector independent of $X^{(p)}$. Here the index $i_p$ only appears in $u_{i_1\cdots i_{p-1}i_p i_{p+1}\cdots i_{\ell}}^k$. Combine (\ref{eq1}) and (\ref{eq2}) we have for every $p=1,\cdots,\ell$,
    \begin{equation}\label{eq3}
        \mathbf{1}_K\left(\otimes_{j=1}^{\ell}X^{(j)}\right)=\prod_{k=1}^N\mathbf{1}_{[-1,1]}\left(\left\langle X^{(p)},\theta^k_p\right\rangle\right)
    \end{equation}
    Therefore for every $p=1,\cdots,\ell$,
    \begin{align*}
        &\mathbb{P}\left(\otimes_{j=1}^{\ell}X^{(j)}\in K\right) \\    =&\int_{\mathbb{R}^{n_1}}\dots\int_{\mathbb{R}^{n_{\ell}}}\mathbf{1}_K\left(\otimes_{j=1}^{\ell}X^{(j)}\right)\prod_{i=1}^{n_1}f_i^{(1)}\left(\left\langle X^{(1)},e_i\right\rangle\right)\dd X^{(1)}\dots\prod_{i=1}^{n_{\ell}}f_i^{(\ell)}\left(\left\langle X^{(\ell)},e_i\right\rangle\right)\dd X^{(\ell)}\\  =&\int_{\mathbb{R}^{n_1}}\dots\int_{\mathbb{R}^{n_{\ell}}}\prod_{k=1}^N\mathbf{1}_{[-1,1]}\left(\left\langle X^{(p)},\theta^k_p\right\rangle\right)\prod_{i=1}^{n_1}f_i^{(1)}\left(\left\langle X^{(1)},e_i\right\rangle\right)\dd X^{(1)}\dots\prod_{i=1}^{n_{\ell}}f_i^{(\ell)}\left(\left\langle X^{(\ell)},e_i\right\rangle\right)\dd X^{(\ell)} 
    \end{align*}
    which is an $\ell$-fold integral. By Fubini's theorem, we first choose $p=1$ and consider the single integral about $X^{(1)}$
    \begin{align*}
        &\int_{\mathbb{R}^{n_1}} \prod_{k=1}^N\mathbf{1}_{[-1,1]}\left(\langle X^{(1)},\theta_1\rangle\right)\prod_{i=1}^{n_1}f_i^{(1)}\left(\left\langle X^{(1)},e_i\right\rangle\right)\dd X^{(1)} \\
        \leq&\int_{\mathbb{R}^{n_1}} \prod_{k=1}^N\mathbf{1}_{[-1,1]}\left(\langle X^{(1)},\theta_1\rangle\right)\prod_{i=1}^{n_1}{f_i^{(1)}}^*\left(\left\langle X^{(1)},e_i\right\rangle\right)\dd X^{(1)}.
    \end{align*}
    The inequality follows from Rogers-Brascamp-Lieb-Luttinger inequality (\textsc{Theorem} \ref{BLL}) since the indicator function $\mathbf{1}_{[-1,1]} (\langle X^{(1)},\theta_1\rangle)$ is non-negative and even. Then
    \begin{align*}
        &\mathbb{P}\left(\otimes_{j=1}^{\ell}X^{(j)}\in K\right) \\
        \leq&\int_{\mathbb{R}^{n_1}}\dots\int_{\mathbb{R}^{n_{\ell}}}\prod_{k=1}^N\mathbf{1}_{[-1,1]}\left(\left\langle X^{(p)},\theta^k_p\right\rangle\right)\prod_{i=1}^{n_1}{f_i^{(1)}}^*\left(\left\langle X^{(1)},e_i\right\rangle\right)\dd X^{(1)}\cdots\prod_{i=1}^{n_{\ell}}f_i^{(\ell)}\left(\left\langle X^{(\ell)},e_i\right\rangle\right)\dd X^{(\ell)}
    \end{align*}
    Now we repeat (\ref{eq3}) for $p=2,\cdots,\ell$ and apply Fubini's theorem for the single integrals of $X^{(2)},\cdots,X^{(\ell)}$, we have
    \begin{align*}
        &\mathbb{P}(\otimes_{j=1}^{\ell} X^{(j)}\in K) \\
        \leq&\int_{\mathbb{R}^{n_1}}\dots\int_{\mathbb{R}^{n_{\ell}}}\prod_{k=1}^N\mathbf{1}_{[-1,1]}\left(\langle X^{(p)},u^p\rangle\right)\prod_{i=1}^{n_1}{f_i^{(1)}}^*\left(\left\langle X^{(1)},e_i\right\rangle\right)\dd X^{(1)}\dots\prod_{i=1}^{n_{\ell}}{f_i^{(\ell)}}^*\left(\left\langle X^{(\ell)},e_i\right\rangle\right)\dd X^{(\ell)} \\
        =&\mathbb{P}(\otimes_{j=1}^{\ell} {X^{(j)}}^*\in K) 
    \end{align*}
    where $K$ is any polytope. For a general symmetric convex body $K$, recall from \textsc{Lemma} \ref{circumscribed} that there exists a sequence of polytopes $V_n$ that are circumscribed out of $K$ such that 
        $$\lim_{n\rightarrow\infty}\left|V_n\backslash K\right|=0.$$
    Define $K_n=\cap_{i=1}^nV_i$, then $K_n$ is a polytope as well and $K_n\supset K_{n+1}$ and
        $$\lim_{n\rightarrow\infty}\left|K_n\backslash K\right|=0.$$
    Hence $\lim_{n\rightarrow\infty}K_n=K$ almost everywhere. Define
        $$f_n(x)=\mathbf{1}_{K_n^C}(x).$$
    Then $0\leq f_n(x)\leq f_{n+1}(x)$, and for almost every $x$
        $$\mathbf{1}_{K^C}(x)=\lim_{n\rightarrow\infty}f_n(x).$$
    By monotone convergence theorem, 
        \begin{align*}
            &\lim_{n\rightarrow\infty}\mathbb{P}\left(\otimes_{j=1}^{\ell} X^{(j)}\in K_n\right)=1-\lim_{n\rightarrow\infty}\mathbb{P}\left(\otimes_{j=1}^{\ell} X^{(j)}\in K_n^C\right)\\
            =&1-\lim_{n\rightarrow\infty}\mathbb{P}\left(\otimes_{j=1}^{\ell} X^{(j)}\in K^C\right)=\mathbb{P}(\otimes_{j=1}^{\ell} X^{(j)}\in K)
        \end{align*}
\end{proof}
Following similar strategy but applying Kanter's theorem instead of Rogers-Brascamp-Lieb-Luttinger inequality, we prove the following.
\begin{theorem}\label{dominance2}
    Let $X^{(j)}\in\mathbb{R}^{n_j}, 1\leq j\leq\ell$ be independent random vectors with independent coordinates whose densities have uniform norms bounded by 1, then
        $$\otimes_{j=1}^{\ell} {X^{(j)}}^*\prec\otimes_{j=1}^{\ell}\mathbf{1}_{{\left[-\frac{1}{2},\frac{1}{2}\right]}^{n_j}}.$$
    Here  by  an  abuse  of  notation,  we  let $\mathbf{1}_{\left[-\frac{1}{2},\frac{1}{2}\right]^{n_j}}$ denotes  the  uniform  distribution  on $\left[-\frac{1}{2},\frac{1}{2}\right]^{n_j}$.
\end{theorem}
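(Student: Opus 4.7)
My plan is to mirror the architecture of \textsc{Theorem} \ref{dominance1}: approximate a symmetric convex body $K$ by circumscribed symmetric polytopes, decompose membership in $K$ as a product of 1D slab conditions in a single factor $X^{(p)*}$, and substitute factor-by-factor. The only structural change is that the per-step application of the Rogers--Brascamp--Lieb--Luttinger inequality gets replaced by the multidimensional dominance $X^{(p)*} \prec \mathbf{1}_{[-1/2,1/2]^{n_p}}$, which I would establish first as a stand-alone lemma.

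For this per-factor dominance, observe that $X^{(j)*}$ has density $\prod_{i=1}^{n_j}{f_i^{(j)}}^*$, where each ${f_i^{(j)}}^*$ is the 1D symmetric decreasing rearrangement of the density $f_i^{(j)}$ of $X_i^{(j)}$. Since $\norm{{f_i^{(j)}}^*}_\infty = \norm{f_i^{(j)}}_\infty \leq 1$ and ${f_i^{(j)}}^*$ is even, the 1D lemma stated just before \textsc{Theorem} \ref{BLL} gives $X_i^{(j)*} \prec \mathbf{1}_{[-1/2,1/2]}$ coordinate by coordinate. Each ${f_i^{(j)}}^*$ is even and nonincreasing in $|x|$, hence unimodal, and so is $\mathbf{1}_{[-1/2,1/2]}$. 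Applying \textsc{Theorem} \ref{Kanter} to the $n_j$-fold independent-coordinate product then upgrades this to $X^{(j)*} \prec \mathbf{1}_{[-1/2,1/2]^{n_j}}$ for every $j = 1,\dots,\ell$.

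With the per-factor dominance in hand, reduce by \textsc{Lemma} \ref{circumscribed} to the case of a symmetric polytope $K = \bigcap_{k=1}^N\{x : |\langle x, u^k\rangle|\leq 1\}$. For any $p \in \{1,\dots,\ell\}$, identity (\ref{eq3}) gives
$$\mathbf{1}_K\!\left(\otimes_{j=1}^{\ell} X^{(j)*}\right) \;=\; \prod_{k=1}^N \mathbf{1}_{[-1,1]}\!\left(\langle X^{(p)*}, \theta_p^k\rangle\right),$$
where $\theta_p^k$ depends only on $u^k$ and $\{X^{(j)*}\}_{j\neq p}$. Viewed as a function of $X^{(p)*}$ alone (with the remaining factors frozen), the right-hand side is the indicator of a centered intersection of symmetric slabs in $\mathbb{R}^{n_p}$. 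Conditioning on $\{X^{(j)*}\}_{j\neq p}$ and invoking $X^{(p)*} \prec \mathbf{1}_{[-1/2,1/2]^{n_p}}$ on this symmetric convex set (truncated by a large ball if needed, then passed to a limit), I may replace $X^{(p)*}$ by an independent $Y^{(p)} \sim \mathbf{1}_{[-1/2,1/2]^{n_p}}$ without decreasing the conditional probability. Iterating over $p=1,\dots,\ell$ yields $\mathbb{P}(\otimes_j X^{(j)*} \in K) \leq \mathbb{P}(\otimes_j Y^{(j)} \in K)$ for polytopal $K$, and the monotone convergence paragraph at the end of \textsc{Theorem} \ref{dominance1} transfers this to arbitrary symmetric convex $K$ verbatim.

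The most delicate point, and thus my candidate for the main obstacle, is ensuring that the iterative substitution remains legitimate as we move from factor $p$ to factor $p+1$: once $X^{(j)*}$ has been swapped for $Y^{(j)}$ at earlier indices, the vectors $\theta_{p+1}^k$ now depend on $Y^{(j)}$ for $j\leq p$ and on $X^{(j)*}$ for $j > p+1$, but they must still define a symmetric convex slab set in $\mathbb{R}^{n_{p+1}}$. This is automatic from the multilinearity of the tensor inner product in each factor together with the centered symmetry of $K$, so no new ingredient beyond \textsc{Theorem} \ref{Kanter} and the 1D lemma is needed.
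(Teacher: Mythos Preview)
Your proposal is correct and follows essentially the same approach as the paper: polytope approximation via \textsc{Lemma}~\ref{circumscribed}, factor-by-factor substitution using identity~(\ref{eq3}), and \textsc{Theorem}~\ref{Kanter} (Kanter) to pass from $\prod_i {f_i^{(p)}}^*$ to $\prod_i \mathbf{1}_{[-1/2,1/2]}$ in each single-factor integral. The only cosmetic difference is that you package the per-factor dominance $X^{(p)*}\prec \mathbf{1}_{[-1/2,1/2]^{n_p}}$ as a standalone lemma before the main argument, whereas the paper invokes Kanter directly inside the iterated integral; the content is identical.
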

\begin{proof}
    As in the proof of the previous theorem, it suffices to approximate any symmetric convex body by a polytope. Let $K, u^k,\theta_p^k$ for $k=1,\cdots,$ be defined as in the proof of the previous theorem. Then
    \begin{align*}
        &\mathbb{P}\left(\otimes_{j=1}^{\ell}{X^{(j)}}^*\in K\right) \\
        =&\int_{\mathbb{R}^{n_1}}\dots\int_{\mathbb{R}^{n_{\ell}}}\mathbf{1}_K\left(\otimes_{j=1}^{\ell}X^{(j)}\right)\prod_{i=1}^{n_1}{f_i^{(1)}}^*\left(\left\langle X^{(1)},e_i\right\rangle\right)\dd X^{(1)}\dots\prod_{i=1}^{n_{\ell}}{f_i^{(\ell)}}^*\left(\left\langle X^{(\ell)},e_i\right\rangle\right)\dd X^{(\ell)}\\
        =&\int_{\mathbb{R}^{n_1}}\dots\int_{\mathbb{R}^{n_{\ell}}}\prod_{k=1}^N\mathbf{1}_{[-1,1]}\left(\left\langle X^{(p)},\theta^k_p\right\rangle\right)\prod_{i=1}^{n_1}f_i^{(1)}\left(\langle X^{(1)*},e_i\rangle\right)\dd X^{(1)}\dots\prod_{i=1}^{n_{\ell}}{f_i^{(\ell)}}^*\left(\left\langle X^{(\ell)},e_i\right\rangle\right)\dd X^{(\ell)} 
    \end{align*}
    which is an $\ell$-fold integral. By Fubini's theorem, we first choose $p=1$ and consider the single integral
    \begin{align*}
        &\int_{\mathbb{R}^{n_1}} \prod_{k=1}^N\mathbf{1}_{[-1,1]}\left(\langle X^{(1)},\theta_1\rangle\right)\prod_{i=1}^{n_1}{f_i^{(1)}}^*\left(\left\langle X^{(1)},e_i\right\rangle\right)\dd X^{(1)} \\
        \leq&\int_{\mathbb{R}^{n_1}} \prod_{k=1}^N\mathbf{1}_{[-1,1]}\left(\langle X^{(1)},\theta_1\rangle\right)\prod_{i=1}^{n_1}\mathbf{1}_{\left[-\frac{1}{2},\frac{1}{2}\right]}\left(\left\langle X^{(1)},e_i\right\rangle\right)\dd X^{(1)}
    \end{align*}
    This follows from Kanter's theorem (\textsc{Theorem} \ref{Kanter}). For the $\ell$-fold integral, by choosing $p=2,\dots,\ell$ and using Fubini's theorem, we have
    \begin{align*}
        &\mathbb{P}(\otimes_{j=1}^{\ell} X^{(j)}\in K) \\
        =&\int_{\mathbb{R}^{n_1}}\dots\int_{\mathbb{R}^{n_{\ell}}}\prod_{k=1}^N\mathbf{1}_{[-1,1]}\left(\langle X^{(p)},u^p\rangle\right)\prod_{i=1}^{n_1}{f_i^{(1)}}^*\left(\left\langle X^{(1)},e_i\right\rangle\right)\dd X^{(1)}\dots\prod_{i=1}^{n_{\ell}}{f_i^{(\ell)}}^*\left(\left\langle X^{(\ell)},e_i\right\rangle\right)\dd X^{(\ell)} \\
        =&\mathbb{P}(\otimes_{j=1}^{\ell} {X^{(j)}}^*\in K).
    \end{align*}
\end{proof}
By definition, stochastic dominance is transitive. Hence combining \textsc{Theorem} \ref{dominance1} and \ref{dominance2}, we have the corrollary.
\begin{corollary}\label{3.4}
    Let $X^{(j)}\in\mathbb{R}^{n_j}, 1\leq j\leq\ell$ be independent random vectors with independent coordinates whose densities have uniform norms bounded by 1, then
        $$\otimes_{j=1}^{\ell} X^{(j)}\prec\otimes_{j=1}^{\ell} {X^{(j)}}^*\prec\otimes_{j=1}^{\ell}\mathbf{1}_{{\left[-\frac{1}{2},\frac{1}{2}\right]}^{n_j}}.$$
\end{corollary}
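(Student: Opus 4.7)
The plan is to deduce Corollary \ref{3.4} by chaining the two previously established dominance results via transitivity of $\prec$. Transitivity is immediate from the definition: if $X_1 \prec X_2$ and $X_2 \prec X_3$, then for every symmetric convex body $K$ in the ambient space we have $\mathbb{P}(X_1 \in K) \leq \mathbb{P}(X_2 \in K) \leq \mathbb{P}(X_3 \in K)$, hence $X_1 \prec X_3$. So the only real work is to verify that the hypotheses of each of the two preceding theorems are met in the present setting.

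First, I would invoke Theorem \ref{dominance1}, whose only requirement is that each $X^{(j)}$ has independent coordinates; no density bound is needed at this stage. That yields
\[
\otimes_{j=1}^{\ell} X^{(j)} \;\prec\; \otimes_{j=1}^{\ell} {X^{(j)}}^{*}.
\]
(Recall that this is the step where Rogers--Brascamp--Lieb--Luttinger is applied slice-by-slice, replacing each one-dimensional marginal density by its symmetric decreasing rearrangement.)

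Second, I would invoke Theorem \ref{dominance2}, which additionally exploits $\|f_i^{(j)}\|_\infty \leq 1$ together with Kanter's theorem and the unimodality of symmetric decreasing rearrangements, to obtain
\[
\otimes_{j=1}^{\ell} {X^{(j)}}^{*} \;\prec\; \otimes_{j=1}^{\ell} \mathbf{1}_{[-\tfrac{1}{2},\tfrac{1}{2}]^{n_j}}.
\]
Concatenating the two relations by transitivity gives the claim. No genuine obstacle arises here; the corollary is a bookkeeping statement packaging the content of the two preceding theorems, and its only purpose is to record the end-to-end comparison with the uniform tensor on the cube that will be used downstream.
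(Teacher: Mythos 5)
Your proposal is correct and matches the paper's argument exactly: the paper also obtains the corollary by applying \textsc{Theorem} \ref{dominance1} and \textsc{Theorem} \ref{dominance2} in turn and chaining the two relations via the transitivity of $\prec$, which is immediate from the definition. Your additional remark that the density bound is only needed for the second step is accurate and consistent with the hypotheses of the two theorems.
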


\section{Anti-concentration of "log-concave" simple random tensors}
In this section we will prove \textsc{Theorem} \ref{1.4}. Suppose $X^{(j)}=\left(X_1^{(j)},\cdots,X_{n_j}^{(j)}\right), 1\leq j\leq\ell$ are random vectors in $\mathbb{R}^{n_j}$. Define the simple random tensor
    $$X:=X^{(1)}\otimes\cdots\otimes X^{(\ell)}=\left(X^{(1)}_{i_1}\cdots X^{(\ell)}_{i_{\ell}}\right)_{i_1\cdots i_{\ell}}.$$
Let $F$ be an $m$-dimensional subspace in $\mathbb{R}^{n_1\otimes\cdots\otimes n_{\ell}}$ and let $f^{(1)},\cdots,f^{(m)}$ be an orthonormal basis for $F$. Denote by $\Pi_FX^{(1)}\otimes\cdots\otimes X^{(\ell)}$ the orthogonal projection of $X^{(1)}\otimes\cdots\otimes X^{(\ell)}$ onto $F$. Then by definition we have
    $$\norm{\Pi_FX^{(1)}\otimes\cdots\otimes X^{(\ell)}}_2^2=\sum_{k=1}^m\left|\left\langle X^{(1)}\otimes\cdots\otimes X^{(\ell)},f^{(k)}\right\rangle\right|^2.$$
Here we apply the following version of Guedon's theorem, which is an application of \textsc{Theorem} \ref{guedonthm}.
\begin{lemma}\label{Guedon}
    Let $X\in\mathbb{R}^n$ be a log-concave random vector and let $\norm{\cdot}$ be a seminorm. Then for every $\varepsilon\in(0,1)$ we have 
    \begin{equation*}
        \mathbb{P}\left(\norm{X}\leq\varepsilon\mathbb{E}\norm{X}\right) \leq C\varepsilon.
    \end{equation*} 
\end{lemma}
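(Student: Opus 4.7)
The plan is to apply Gu\'edon's inequality (\textsc{Theorem} \ref{guedonthm}) to a dilate of the unit ball of the seminorm. Specifically, I would set $A := \{x \in \mathbb{R}^n : \|x\| \leq 2\,\mathbb{E}\|X\|\}$. The set $A$ is symmetric and convex because $\|\cdot\|$ is a seminorm, and $\mathbb{E}\|X\|$ is finite since log-concave measures have all moments (by Borell's lemma / \textsc{Theorem} \ref{Paouris} applied to the $1$-Lipschitz function $\|\cdot\|$, after factoring out any kernel of the seminorm). I may assume $\mathbb{E}\|X\| > 0$, since otherwise $\|X\|=0$ almost surely and the bound is trivial.

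The next step is a cheap lower bound on $\mu(A)$ by Markov's inequality: $\mathbb{P}(\|X\| > 2\,\mathbb{E}\|X\|) \leq \tfrac{1}{2}$, and therefore $\mu(A) \geq \tfrac{1}{2}$. This gives $\log\bigl(1/(1-\mu(A))\bigr) \leq \log 2$. Now for any $\delta \in (0,1]$, Gu\'edon's theorem yields
\begin{equation*}
    \mu(\delta A) \;\leq\; 2\delta \log\!\Bigl(\tfrac{1}{1-\mu(A)}\Bigr) \;\leq\; 2\delta \log 2.
\end{equation*}
Choosing $\delta = \varepsilon/2$ gives $\delta A = \{x : \|x\| \leq \varepsilon\,\mathbb{E}\|X\|\}$, whence $\mathbb{P}(\|X\| \leq \varepsilon\,\mathbb{E}\|X\|) \leq (\log 2)\,\varepsilon$, which is the claimed bound with $C = \log 2$.

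The one technical point I expect to be slightly annoying is that $A$ need not be bounded: since $\|\cdot\|$ is only a seminorm, the kernel $V = \{x : \|x\| = 0\}$ can be nontrivial, and $A$ is an unbounded symmetric convex set (a ``cylinder'' over $V$). Gu\'edon's statement as recorded is for symmetric convex bodies. I would resolve this by a standard truncation/approximation: apply Gu\'edon to the symmetric convex bodies $A_R := A \cap B(0,R)$ for $R \to \infty$, note that $\delta A_R = \delta A \cap B(0,\delta R)$, and use monotone convergence to pass $\mu(A_R) \uparrow \mu(A)$ and $\mu(\delta A_R) \uparrow \mu(\delta A)$ simultaneously. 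Since the Markov bound $\mu(A_R) \geq \mu(A) - o(1) \geq 1/2 - o(1)$ is eventually at least $1/2$ (and in the limit exactly $\geq 1/2$), the constant in $\log(1/(1-\mu(A_R)))$ stays bounded by $\log 2$ in the limit, and the proof above carries over verbatim.
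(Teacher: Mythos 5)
There is a genuine error at the heart of your argument: the inequality in step two points the wrong way. The function $t\mapsto\log\bigl(1/(1-t)\bigr)$ is \emph{increasing} on $[0,1)$, so the Markov bound $\mu(A)\geq\tfrac12$ gives $\log\bigl(1/(1-\mu(A))\bigr)\geq\log 2$, not $\leq\log 2$. To extract anything from Gu\'edon's theorem you need $\mu(A)$ bounded \emph{above}, away from $1$ --- equivalently, a \emph{lower} bound on $\mathbb{P}(X\notin A)$ --- and Markov's inequality delivers exactly the opposite. Worse, with your choice $A=\{x:\norm{x}\leq 2\,\mathbb{E}\norm{X}\}$ the quantity $\mu(A)$ can equal $1$ outright (take $X$ uniform on $[-1,1]$ and $\norm{x}=|x|$: then $\mathbb{E}|X|=\tfrac12$ and $\mathbb{P}(|X|\leq 1)=1$), in which case the right-hand side of Gu\'edon's inequality is $+\infty$ and the theorem says nothing. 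So the proof as written does not go through.

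The repair is standard but uses a different pair of tools. Take $A=\{x:\norm{x}\leq\tfrac12\,\mathbb{E}\norm{X}\}$ (any fixed fraction strictly below $1$ works). By Borell's lemma, seminorms of log-concave vectors satisfy the reverse H\"older inequality $\bigl(\mathbb{E}\norm{X}^{2}\bigr)^{1/2}\leq C_{B}\,\mathbb{E}\norm{X}$, and then the Paley--Zygmund inequality gives
\begin{equation*}
\mathbb{P}\Bigl(\norm{X}>\tfrac12\,\mathbb{E}\norm{X}\Bigr)\;\geq\;\frac{\bigl(\tfrac12\,\mathbb{E}\norm{X}\bigr)^{2}}{\mathbb{E}\norm{X}^{2}}\;\geq\;\frac{1}{4C_{B}^{2}}=:c_{0},
\end{equation*}
so $\mu(A)\leq 1-c_{0}$ and $\log\bigl(1/(1-\mu(A))\bigr)\leq\log(1/c_{0})$, a universal constant. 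Applying Gu\'edon with dilation $\delta=2\varepsilon$ (for $\varepsilon\leq\tfrac12$; the range $\varepsilon\in(\tfrac12,1)$ is trivial once $C\geq 2$) then yields $\mathbb{P}\bigl(\norm{X}\leq\varepsilon\,\mathbb{E}\norm{X}\bigr)=\mu(2\varepsilon A)\leq 4\varepsilon\log(1/c_{0})$, which is the claim. Your truncation $A_{R}=A\cap B(0,R)$ to handle the possibly unbounded cylinder over the kernel of the seminorm is a legitimate and necessary technical point, and it survives unchanged in the corrected argument; but it cannot rescue the main step, which needs Paley--Zygmund and Borell rather than Markov.
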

We start the proof of \textsc{Theorem} \ref{1.4} for the case $\ell=2$. We would like to show that if $X^{(1)}\in\mathbb{R}^{n_1},X^{(2)}\in\mathbb{R}^{n_2}$ are independent isotropic log-concave random vectors and $f$ is in the orthonormal basis for subspace $F\in\mathbb{R}^{n_1\times n_2}$, then 
    $$\mathbb{P}\left(\left|\left\langle X^{(1)}\otimes X^{(2)},f\right\rangle\right|\leq\varepsilon\right)\leq C\varepsilon\log{\frac{1}{\varepsilon}}.$$ 
Note that for log-concave random vector $X$, $\mathbb{E}\norm{X}_2$ and $\sqrt{\mathbb{E}\norm{X}_2^2}$ are equivalent by Borell's lemma (see \textsc{Theorem} 1.5.7 in \cite{AGGbook1}). Set $\theta_{X^{(2)}}=\left(\sum_{i_2=1}^{n_2}X^{(2)}_{i_2}f_{i_1i_2}\right)_{i_1=1}^{n_1}$, which is linear image of $X^{(2)}$, therefore also log-concave. Note that
    $$\mathbb{E}\norm{\theta_{X^{(2)}}}_2^2=\sum_{i_1=1}^{n_1}\mathbb{E}\left(\sum_{i_2=1}^{n_2}X^{(2)}_{i_2}f_{i_1i_2}\right)^2=\sum_{i_1=1}^{n_1}\sum_{i_2=1}^{n_2}f_{i_1i_2}^2=1,$$
    $$\mathbb{E}\norm{\theta_{X^{(2)}}}_2\geq c_0,$$
where $0<c_0<1$ is some universal constant. Then for $0<\varepsilon<c_0$, we apply \textsc{Lemma} \ref{Guedon}, and we have
\begin{equation}\label{eq4}
    \mathbb{P}\left(\left|\left\langle\ X^{(1)},\theta\right\rangle\right|\leq\varepsilon\right) \leq C_1\varepsilon,
\end{equation}
\begin{equation}\label{eq5}
    \mathbb{P}\left(\norm{\theta_{X^{(2)}}}_2\leq\varepsilon\right) \leq C_1\varepsilon.
\end{equation}
Define a partition as follows 
    $$B_{\lambda}^{c}=\{\norm{\theta_{X^{(2)}}}_2<\lambda\},$$
    $$B_{\lambda,p}=\{\lambda2^p<\norm{\theta_{X^{(2)}}}_2<\lambda2^{p+1}\},\quad p=0,1,\cdots,N,$$
    $$B_{\lambda,N+1}=\{\norm{\theta_{X^{(2)}}}_2>\lambda2^{N+1}\}.$$
Let $c=\min\left\{c_0,e^{-1}\right\}$. Then for $0<\varepsilon<c^2$, 
\begin{align*}
    &\mathbb{P}\left(\sum_{i_1,i_2}X^{(1)}_{i_1}X^{(2)}_{i_2}f_{i_1i_2}\leq\varepsilon\right)\\
    =&\mathbb{P}\left(\left|\left\langle X^{(1)},\theta_{X^{(2)}}\right\rangle\right|\leq\varepsilon\right)\\
    =&\mathbb{E}_{X^{(2)}}\mathbb{P}_{X^{(1)}}\left(\left|\left\langle X^{(1)},\theta_{X^{(2)}}\right\rangle\right|\leq\varepsilon\right)\\
    =&\mathbb{E}_{X^{(2)}}\mathbf{1}_{B^c_{\lambda}}\mathbb{P}_{X^{(1)}}\left(\left|\left\langle X^{(1)},\theta_{X^{(2)}}\right\rangle\right|\leq\varepsilon\right)+\sum_{p=0}^{N}\mathbb{E}_{X^{(2)}}\mathbf{1}_{B_{\lambda,p}}\mathbb{P}_{X^{(1)}}\left(\left|\left\langle X^{(1)},\theta_{X^{(2)}}\right\rangle\right|\leq\varepsilon\right)\\
    +&\mathbb{E}_{X^{(2)}}\mathbf{1}_{B_{\lambda,N+1}}\mathbb{P}_{X^{(1)}}\left(\left|\left\langle X^{(1)},\theta_{X^{(2)}}\right\rangle\right|\leq\varepsilon\right).
\end{align*}

\noindent Here we estimate each term by repeatedly applying (\ref{eq4}) and (\ref{eq5}),
    $$\mathbb{E}_{X^{(2)}}\mathbf{1}_{B^c_{\lambda}}\mathbb{P}_{X^{(1)}}\left(\left|\left\langle X^{(1)},\theta_{X^{(2)}}\right\rangle\right|\leq\varepsilon\right)\leq\mathbb{P}\left(B_{\lambda}^c\right)=\mathbb{P}\left(\norm{\theta_{X^{(2)}}}_2<\lambda\right)\leq C_1\lambda,$$
and
\begin{align*}
    &\mathbb{E}_{X^{(2)}}\mathbf{1}_{B_{\lambda,p}}\mathbb{P}_{X^{(1)}}\left(\left|\left\langle X^{(1)},\theta_{X^{(2)}}\right\rangle\right|\leq\varepsilon\right)\\
    \leq&\mathbb{E}_{X^{(2)}}\mathbf{1}_{B_{\lambda,p}}\mathbb{P}_{X^{(1)}}\left(\left|\left\langle X^{(1)},\frac{\theta_{X^{(2)}}}{\norm{\theta_{X^{(2)}}}_2}\right\rangle\right|\leq\frac{\varepsilon}{\lambda2^p}\right)\\
    \leq&\frac{C_1\varepsilon}{\lambda2^p}\mathbb{P}\left(B_{\lambda,p}\right)\\
    \leq&\frac{C_1\varepsilon}{\lambda2^p}\mathbb{P}\left(\norm{\theta_{X^{(2)}}}_2\leq2^{p+1}\lambda\right)\\
    \leq&C_2\varepsilon,
\end{align*}
and
    $$\mathbb{E}_{X^{(2)}}\mathbf{1}_{B_{\lambda,N+1}}\mathbb{P}_{X^{(1)}}\left(\left|\left\langle X^{(1)},\theta_{X^{(2)}}\right\rangle\right|\leq\varepsilon\right)\leq\mathbb{E}_{X^{(2)}}\mathbb{P}_{X^{(1)}}\left(\left|\left\langle X^{(1)},\frac{\theta_{X^{(2)}}}{\norm{\theta_{X^{(2)}}}_2}\right\rangle\right|\leq\frac{\varepsilon}{\lambda2^{N+1}}\right)\leq \frac{\varepsilon}{\lambda2^{N+1}}.$$
Hence
    $$\mathbb{P}\left(\left|\langle X^{(1)}\otimes X^{(2)},f\rangle\right|\leq\varepsilon\right)\leq C_1\lambda+C_2N\varepsilon+\frac{\varepsilon}{\lambda2^{N+1}}.$$
Choose $\lambda=\frac{\varepsilon}{c}$ and $N=\log_2\frac{1}{\varepsilon}$, then
    $$\mathbb{P}\left(\left|\langle X^{(1)}\otimes X^{(2)},f\rangle\right|\leq\varepsilon\right)\leq \left(\frac{C_1}{c}+\frac{c}{2}\right)\varepsilon+C_2\varepsilon\log_2\frac{1}{\varepsilon}\leq C\varepsilon\log\frac{1}{\varepsilon}.$$

Now we consider the general case for $\ell\geq3$ by induction. Suppose it's true that for independent log-concave random vectors $X^{(j)}\in\mathbb{R}^{n_j}, 1\leq j\leq\ell-1$ whose coordinates are mean zero, uncorrelated and have variances bounded by $1$ and $f$ in the orthonormal basis for subspace $F\in\mathbb{R}^{n_1\times\cdots\times n_{\ell-1}}$, for $0<\varepsilon<c^{\ell-1}$ we have
    $$\mathbb{P}\left(\left|\langle X^{(1)}\otimes\cdots\otimes X^{(\ell-1)},f\rangle\right|\leq\varepsilon\right)\leq \frac{\varepsilon}{(\ell-2)!}\left(C\log\frac{1}{\varepsilon}\right)^{\ell-2}.$$
Then for independent log-concave random vectors $X^{(j)}\in\mathbb{R}^{n_j}, 1\leq i\leq\ell$ whose coordinates are mean zero, uncorrelated and have variances bounded by $1$ and $f$ in the orthonormal basis for subspace $F\in\mathbb{R}^{n_1\otimes\cdots\otimes n_{\ell}}$, for $0<\varepsilon<c^{\ell}$,
\begin{align*}
    &\mathbb{P}\left(\left|\left\langle X^{(1)}\otimes\cdots\otimes X^{(\ell)},f\right\rangle\right|\leq\varepsilon\right)\\
    =&\mathbb{P}\left(\left|\sum_{i_1\cdots i_{\ell}}X^{(1)}_{i_1}\cdots X^{(\ell)}_{i_{\ell}}f_{i_1\cdots i_{\ell}}\right|\leq\varepsilon\right)\\
    =&\mathbb{E}_{X^{(\ell)}}\mathbb{P}_{X^{(1)}\cdots X^{(\ell-1)}}\left(\left|\sum_{i_1\cdots i_{\ell-1}}X^{(1)}_{i_1}\cdots X^{(\ell-1)}_{i_{\ell-1}}\left(\sum_{i_{\ell}}X^{(\ell)}_{i_{\ell}}f_{i_1\cdots i_{\ell}}\right)\right|\leq\varepsilon\right)\\
    =&\mathbb{E}_{X^{(\ell)}}\mathbb{P}_{X^{(1)}\cdots X^{(\ell-1)}}\left(\left|\left\langle X^{(1)}\otimes\cdots\otimes X^{(\ell-1)},\theta_{X^{(\ell)}}\right\rangle\right|\leq\varepsilon\right)
\end{align*}
where 
    $$\theta_{X^{(\ell)}}:=\left(\sum_{i_{\ell}}X^{(\ell)}_{i_{\ell}}f_{i_1\cdots i_{\ell-1}i_{\ell}}\right)_{i_1\cdots i_{\ell-1}}=A_{\ell}X^{(\ell)}\in\mathbb{R}^{n_1\cdots n_{\ell-1}}$$
is a linear image of $X^{(\ell)}$ where
    $$A_{\ell}=\left(f_{i_1\cdots i_{\ell-1}i_{\ell}}\right)_{i_1\cdots i_{\ell-1},i_{\ell}}\in\mathbb{R}^{n_1\cdots n_{\ell-1}\times n_{\ell}}$$
is an $n_1\cdots n_{\ell-1}\times n_{\ell}$ matrix. Hence $\theta_{X^{(\ell)}}$ is also log-concave. Consider the Frobenius norm (Euclidean norm) $\norm{\cdot}_2$, then 
    $$\norm{\theta_{X^{(\ell)}}}_2=\left[\sum_{i_1\cdots i_{\ell-1}}\left(\sum_{i_{\ell}}X_{i_{\ell}}^{(\ell)}f_{i_1\cdots i_{\ell-1}i_{\ell}}\right)^2\right]^{1/2},$$
    $$\mathbb{E}\norm{\theta_{X^{(\ell)}}}_2^2=\norm{f_{i_1\cdots i_{\ell}}}_2^2=1,$$
    $$\mathbb{E}\norm{\theta_{X^{(\ell)}}}_2\geq c,$$
where $0<c<1$ is some universal constant. Then for $0<\varepsilon<c_0$, we have
\begin{equation*}
    \mathbb{P}\left(\norm{\theta_{X^{(\ell)}}}_2\leq\varepsilon\right) \leq C_1\varepsilon.
\end{equation*}
Let $c=\min\left\{c_0,e^{-1}\right\}$. Then for $0<\varepsilon<c^{\ell}$,
\begin{align*}
    &\mathbb{P}\left(\left|\left\langle X^{(1)}\otimes\cdots\otimes X^{(\ell)},f\right\rangle\right|\leq\varepsilon\right)\\
    =&\mathbb{E}_{X^{(\ell)}}\mathbb{P}_{X^{(1)}\cdots X^{(\ell-1)}}\left(\left|\left\langle X^{(1)}\otimes\cdots\otimes X^{(\ell-1)},\frac{\theta_{X^{(\ell)}}}{\norm{\theta_{X^{(\ell)}}}_2}\right\rangle\right|\leq\frac{\varepsilon}{\norm{\theta_{X^{(\ell)}}}_2}\right)\\
    =&\mathbb{E}_{X^{(\ell)}}\mathbf{1}_{\left\{\norm{\theta_{X^{(\ell)}}}_2\leq c^{1-\ell}\varepsilon\right\}}\mathbb{P}_{X^{(1)}\cdots X^{(\ell-1)}}\left(\left|\left\langle X^{(1)}\otimes\cdots\otimes X^{(\ell-1)},\frac{\theta_{X^{(\ell)}}}{\norm{\theta_{X^{(\ell)}}}_2}\right\rangle\right|\leq\frac{\varepsilon}{\norm{\theta_{X^{(\ell)}}}_2}\right)\\
    +&\mathbb{E}_{X^{(\ell)}}\mathbf{1}_{\left\{\norm{\theta_{X^{(\ell)}}}_2> c^{1-\ell}\varepsilon\right\}}\mathbb{P}_{X^{(1)}\cdots X^{(\ell-1)}}\left(\left|\left\langle X^{(1)}\otimes\cdots\otimes X^{(\ell-1)},\frac{\theta_{X^{(\ell)}}}{\norm{\theta_{X^{(\ell)}}}_2}\right\rangle\right|\leq\frac{\varepsilon}{\norm{\theta_{X^{(\ell)}}}_2}\right)\\
    \leq&\mathbb{P}_{X^{(\ell)}}\left(\norm{\theta_{X^{(\ell)}}}_2\leq c^{1-\ell}\varepsilon\right)+\mathbb{E}_{X^{(\ell)}}\mathbf{1}_{\left\{\norm{\theta_{X^{(\ell)}}}_2> c^{1-\ell}\varepsilon\right\}}\frac{\varepsilon}{\norm{\theta_{X^{(\ell)}}}_2(\ell-2)!}\left(C\log{\frac{\norm{\theta_{X^{(\ell)}}}_2}{\varepsilon}}\right)^{\ell-2}\\
    \leq&\frac{C_1}{c^{\ell}}\varepsilon+\mathbb{E}_{X^{(\ell)}}\mathbf{1}_{\left\{\norm{\theta_{X^{(\ell)}}}_2> c^{1-\ell}\varepsilon\right\}}\frac{\varepsilon}{\norm{\theta_{X^{(\ell)}}}_2(\ell-2)!}\left(C\log{\frac{\norm{\theta_{X^{(\ell)}}}_2}{\varepsilon}}\right)^{\ell-2}\\
    =&\frac{C_1}{c^{\ell}}\varepsilon+\frac{C^{\ell-2}}{(\ell-2)!}\mathbb{E}_{X^{(\ell)}}\mathbf{1}_{\left\{\norm{\theta_{X^{(\ell)}}}_2> c^{1-\ell}\varepsilon\right\}}\frac{\varepsilon}{\norm{\theta_{X^{(\ell)}}}_2}\left(\log{\frac{\norm{\theta_{X^{(\ell)}}}_2}{\varepsilon}}\right)^{\ell-2}
\end{align*}
It now suffices to show that 
    $$\mathbb{E}\mathbf{1}_{\left\{\norm{\theta_{X^{(\ell)}}}_2> c^{1-\ell}\varepsilon\right\}}\frac{\varepsilon}{\norm{\theta_{X^{(\ell)}}}_2}\left(\log{\frac{\norm{\theta_{X^{(\ell)}}}_2}{\varepsilon}}\right)^{\ell-2}\leq \frac{C}{\ell-1}\varepsilon\left(\log{\frac{1}{\varepsilon}}\right)^{\ell-1},$$
which is proved in \textsc{Lemma} \ref{B1}. This completes the proof of
    $$\mathbb{P}\left(\left|\left\langle X^{(1)}\otimes\cdots\otimes X^{(\ell)},f\right\rangle\right|\leq\varepsilon\right)\leq \frac{\varepsilon}{(\ell-1)!}\left(C\log\frac{1}{\varepsilon}\right)^{\ell-1}.$$

On one hand, recall that
    $$\norm{\Pi_FX^{(1)}\otimes\dots\otimes X^{(\ell)}}_2^2=\sum_{k=1}^m\left|\left\langle\otimes_{j=1}^{\ell}X^{(j)},f^{(k)}\right\rangle\right|^2.$$
Then by a union bound we have
    $$\mathbb{P}\left(\norm{\Pi_FX^{(1)}\otimes\dots\otimes X^{(\ell)}}_2\leq\varepsilon\sqrt{m}\right)\leq\frac{\varepsilon m}{(\ell-1)!}\left(C\log\frac{1}{\varepsilon}\right)^{\ell-1}.$$

On the other hand, let $0<q<1$, then
\begin{align*}
    \mathbb{E}\frac{1}{\norm{\Pi_F\otimes_{j=1}^{\ell}X^{(j)}}_2^q}=&\frac{1}{m^{\frac{q}{2}}}\mathbb{E}\frac{1}{\left(\frac{1}{m}\sum_{k=1}^m\left|\left\langle\otimes_{j=1}^{\ell}X^{(j)},f^{(k)}\right\rangle\right|^2\right)^{\frac{q}{2}}}\\
    \leq&\frac{1}{m^{\frac{q}{2}}}\mathbb{E}\frac{1}{\prod_{k=1}^m\left|\left\langle\otimes_{j=1}^{\ell}X^{(j)},f^{(k)}\right\rangle\right|^{\frac{q}{m}}}\\
    \leq&\frac{1}{m^{\frac{q}{2}}}\prod_{k=1}^m\left(\mathbb{E}\frac{1}{\left|\left\langle\otimes_{j=1}^{\ell}X^{(j)},f^{(k)}\right\rangle\right|^q}\right)^{\frac{1}{m}},
\end{align*}
where the first inequality follows from arithmetic and geometric mean inequality, and the second inequality follows from H\"older's inequality.
Note that for $1\leq k\leq m$, we apply \textsc{Lemma} \ref{C1} with $K=\frac{C^{\ell-1}}{(\ell-1)!}$, then for $1-\frac{1}{C}\leq q<1$, we have 
\begin{equation*}
    \mathbb{E}\frac{1}{\left|\left\langle\otimes_{j=1}^{\ell}X^{(j)},f^{(k)}\right\rangle\right|^q}\leq \frac{\left(K\ell^{\ell}\right)^q}{(1-q)^{q\left(\ell-1\right)+1}}.
\end{equation*}
By \textsc{Lemma} \ref{C2}, we take $0<\varepsilon<e^{-C\ell}$ and $q=1-\frac{\ell}{\ell+\log{\frac{1}{\varepsilon}}}$. Then by Markov's inequality,
    \begin{align*}
        \mathbb{P}\left(\norm{\Pi_F\otimes_{j=1}^{\ell}X^{(j)}}_2\leq\varepsilon\sqrt{m}\right)&=\mathbb{P}\left(\frac{1}{\norm{\Pi_F\otimes_{j=1}^{\ell}X^{(j)}}_2^q}\geq\frac{1}{\varepsilon^qm^{\frac{q}{2}}}\right)\\
        &\leq\varepsilon^q\frac{\left(K\ell^{\ell}\right)^q}{(1-q)^{q\left(\ell-1\right)+1}}\\
        &\leq K\left(3e\right)^{\ell}\varepsilon\left(\log{\frac{1}{\varepsilon}}\right)^{\ell}\\
        &=\frac{{C'}^{\ell}\varepsilon}{(\ell-1)!}\log{\frac{1}{\varepsilon}}\left(C\log{\frac{1}{\varepsilon}}\right)^{\ell-1}.
    \end{align*}

\section{Proof of \textsc{Theorem} \ref{1.5}}
We now consider generic subspace and prove \textsc{Theorem} \ref{1.5}. We are working under the assumption that the random tensor $\otimes_{j=2}^{\ell}X^{(j)}$ is concentrated around the sphere $n^{\frac{\ell-1}{2}}\mathbb{S}^{n^{\ell-1}-1}$. The following two propositions show that the assumption holds with high probability.
\begin{proposition}
    Let $X^{(1)},\cdots,X^{(\ell)}$ be independent isotropic log-concave random vectors in $\mathbb{R}^n$, then for $t\geq2$,
        $$\mathbb{P}\left(\norm{\otimes_{j=1}^{\ell}X^{(j)}}_2\geq t^{\ell}n^{\frac{\ell}{2}}\right)\leq e^{-ct\sqrt{n}\ell},$$
\end{proposition}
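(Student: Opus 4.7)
The plan is to prove the bound by the moment method, using Paouris' moment inequality (Theorem~\ref{Paouris}) together with the multiplicativity of the Frobenius norm on simple tensors. Since $\norm{\otimes_{j=1}^{\ell} X^{(j)}}_{2} = \prod_{j=1}^{\ell}\norm{X^{(j)}}_{2}$, setting $Y_{j} := \norm{X^{(j)}}_{2}/\sqrt{n}$ reduces the problem to estimating $\mathbb{P}\bigl(\prod_{j} Y_{j} \geq t^{\ell}\bigr)$, where the $Y_{j}$ are independent. By Theorem~\ref{Paouris} applied to each $\norm{X^{(j)}}_{2}$ and then dividing by $\sqrt{n}$, one has $(\mathbb{E} Y_{j}^{q})^{1/q} \leq C(1 + q/\sqrt{n})$ for every $q \geq 1$. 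Independence multiplies the moments, so Markov's inequality gives, for every $q \geq 1$,
\[ \mathbb{P}\Bigl(\prod_{j=1}^{\ell} Y_{j} \geq t^{\ell}\Bigr) \leq \Bigl(\frac{C(1 + q/\sqrt{n})}{t}\Bigr)^{\ell q}. \]

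Now I would optimize $q$. Taking $q$ proportional to $t\sqrt{n}$, for example $q = \lceil t\sqrt{n}/(4C)\rceil$, makes $1 + q/\sqrt{n}$ of order $t/C$, so the base on the right is of order $C/t + 1/4$. Once $t$ is at least a fixed multiple of $C$ this base is at most $1/2$, and the bound becomes $(1/2)^{\ell q} \leq e^{-ct\sqrt{n}\ell}$ with $c = (\log 2)/(4C)$. This is the step that produces the factor $\ell$ in the exponent; a naive union bound on $\{\max_{j}Y_{j}\geq t\}$ combined with Paouris' large deviation would only yield $\ell e^{-ct\sqrt{n}}$, missing the $\ell$ in the exponent and therefore being much weaker for large $\ell$. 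The moment method is what exploits independence sharply enough.

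For the remaining bounded range $2 \leq t \lesssim C$ I would take $q \asymp \sqrt{n}$ so that the base $2C/t$ can be forced below $1$ after refining constants, obtaining an estimate of the form $e^{-c_{0}\sqrt{n}\ell}$; since $t$ lies in a bounded interval, this can then be absorbed into the target bound $e^{-ct\sqrt{n}\ell}$ by shrinking $c$ appropriately. The essential probabilistic content is entirely encoded in Paouris' moment inequality together with the algebraic identity $\norm{\otimes X^{(j)}}_{2} = \prod\norm{X^{(j)}}_{2}$ and independence; the main obstacle is purely bookkeeping, namely carrying the constants consistently so that the two regimes of $t$ merge into a single universal $c$ covering all $t \geq 2$.
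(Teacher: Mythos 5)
Your proposal is correct and is essentially the paper's own argument: both use Markov's inequality on the $q$-th moment of $\norm{\otimes_j X^{(j)}}_2=\prod_j\norm{X^{(j)}}_2$, factor the moment by independence, invoke Paouris' moment bound (Theorem \ref{Paouris}), and optimize with $q\asymp t\sqrt{n}$ to get the exponent $ct\sqrt{n}\ell$. If anything you are more careful than the paper about the bounded range $2\leq t\lesssim C$, where the paper's stated choice of $q$ is only consistent after adjusting constants.
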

\begin{proof}
    By Markov's inequality and \textsc{Theorem} \ref{Paouris},
    \begin{align*}
        \mathbb{P}\left(\norm{\otimes_{j=1}^{\ell}X^{(j)}}_2\geq s^{\ell}C^{\ell}\left(\sqrt{n}+q\right)^{\ell}\right)\leq&\frac{\mathbb{E}\norm{\otimes_{j=1}^{\ell}X^{(j)}}_2^q}{s^{q\ell}C^{q\ell}\left(\sqrt{n}+q\right)^{q\ell}}\\
        =&\frac{\prod_{j=1}^{\ell}\mathbb{E}\norm{X^{(j)}}_2^q}{s^{q\ell}C^{q\ell}\left(\sqrt{n}+q\right)^{q\ell}}\\
        \leq&\frac{1}{s^{q\ell}}.
    \end{align*}
    Take $s=\frac{1}{e}$ and $q=\left(\frac{e}{C}t-1\right)\sqrt{n}$ for $t\geq2$, then
        $$\mathbb{P}\left(\norm{\otimes_{j=1}^{\ell}X^{(j)}}_2\geq \left(t\sqrt{n}\right)^{\ell}\right)\leq e^{-(t-1)\sqrt{n}\ell}\leq e^{-ct\sqrt{n}\ell}.$$
\end{proof}
\begin{proposition}\label{2-sided}
    Let $X^{(1)},\cdots,X^{(\ell)}$ be independent isotropic log-concave random vectors in $\mathbb{R}^n$ with Poincar\'e constant $C_P$, where $n$ is bounded from below and $\ell<\frac{2\sqrt{n}}{C_P}$ for some universal constant $C$.  Then for $t>0$,
        $$\mathbb{P}\left(\norm{\otimes_{j=1}^{\ell}X^{(j)}}_2\geq(1+t){n^{\frac{\ell}{2}}}\right)\leq \exp\left\{-C\min\left(\frac{n\left(\log{(1+t)}\right)^2}{C^2C_P^2\ell },\frac{\sqrt{n}\left(\log{(1+t)}\right)}{CC_P}\right)\right\},$$
    in particular, if $0<t<1$,
        $$\mathbb{P}\left(\norm{\otimes_{j=1}^{\ell}X^{(j)}}_2\geq(1+t){n^{\frac{\ell}{2}}}\right)\leq \exp\left\{-C\min\left(\frac{nt^2}{C^2C_P^2\ell },\frac{\sqrt{n}t}{CC_P}\right)\right\}.$$
    And for $1-\frac{1}{\left(1+\frac{C}{\sqrt{n}}\right)^{\ell}}\leq t<1$,
        $$\mathbb{P}\left(\norm{\otimes_{j=1}^{\ell}X^{(j)}}_2\leq(1-t){n^{\frac{\ell}{2}}}\right)\leq \exp\left\{-C\min\left(\frac{n\left(\log{\frac{\left(1-\frac{C_P}{2\sqrt{n}}\right)^{\ell}}{1-t}}\right)^2}{C^2C_P^2\ell },\frac{\sqrt{n}\log{\frac{\left(1-\frac{C_P}{2\sqrt{n}}\right)^{\ell}}{1-t}}}{CC_P}\right)\right\}.$$
\end{proposition}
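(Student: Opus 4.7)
The plan is to reduce both estimates to concentration for a sum of $\ell$ independent subexponential random variables. Since $\norm{\otimes_{j=1}^{\ell} X^{(j)}}_2 = \prod_{j=1}^{\ell}\norm{X^{(j)}}_2$, taking logarithms converts the product into a sum. Set $Z_j = \norm{X^{(j)}}_2 - \sqrt{n}$. By isotropy $\mathbb{E}\norm{X^{(j)}}_2^2 = n$, and applying the Poincar\'e inequality to the $1$-Lipschitz map $x \mapsto \norm{x}_2$ yields $\mathrm{Var}(\norm{X^{(j)}}_2) \leq C_P$, hence
\[
\sqrt{n-C_P}\leq\mathbb{E}\norm{X^{(j)}}_2\leq\sqrt{n},\qquad |\mathbb{E}Z_j|\leq \frac{C_P}{\sqrt{n}}.
\]
The exponential concentration for log-concave measures recorded in the preliminaries gives $\mathbb{P}(|Z_j-\mathbb{E}Z_j|\geq s)\leq 2e^{-cs/C_P}$, so each $Z_j$ is subexponential with Orlicz parameter of order $C_P$.

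For the upper tail I would use the elementary inequality $\log(1+x)\leq x$ (valid for $x > -1$). Taking logarithms in $\prod_j\norm{X^{(j)}}_2\geq (1+t)n^{\ell/2}$ and bounding term by term gives the inclusion
\[
\left\{\prod_j\norm{X^{(j)}}_2\geq (1+t)n^{\ell/2}\right\}\subseteq \left\{\sum_j Z_j\geq \sqrt{n}\log(1+t)\right\}.
\]
Since $\sum_j\mathbb{E}Z_j\leq 0$ by Jensen, the latter event is contained in $\{\sum_j(Z_j-\mathbb{E}Z_j)\geq \sqrt{n}\log(1+t)\}$, and Bernstein's inequality for independent subexponentials with parameter $C_P$ yields
\[
\mathbb{P}\left(\sum_j(Z_j-\mathbb{E}Z_j)\geq v\right)\leq \exp\left(-c\min\left(\frac{v^2}{\ell C_P^2},\frac{v}{C_P}\right)\right).
\]
Plugging in $v=\sqrt{n}\log(1+t)$ produces the claimed bound; the displayed form for $0<t<1$ then follows from $\log(1+t)\geq t/2$ on this range.

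For the lower tail $\prod_j\norm{X^{(j)}}_2\leq (1-t)n^{\ell/2}$, I would first restrict to the event $E=\{\norm{X^{(j)}}_2\geq \sqrt{n}/2\text{ for every }j\}$, whose complement has probability at most $2\ell e^{-c\sqrt{n}/C_P}$ by the same Poincar\'e concentration, and this error is absorbed into the final bound. On $E$ the inequality $-\log a\leq 2(1-a)$ (valid for $a\geq 1/2$) converts the log-form $\sum_j\log(\norm{X^{(j)}}_2/\sqrt{n})\leq \log(1-t)$ into
\[
\sum_j(\sqrt{n}-\norm{X^{(j)}}_2)\geq \tfrac{1}{2}\sqrt{n}\log\frac{1}{1-t}.
\]
Combining $\mathbb{E}\norm{X^{(j)}}_2\geq \sqrt{n-C_P}$ with a Taylor expansion gives $\ell\sqrt{n}-\sum_j\mathbb{E}\norm{X^{(j)}}_2\leq C\ell C_P/\sqrt{n}$, which accounts for the multiplicative shift $(1-C_P/(2\sqrt{n}))^\ell$ appearing in the statement, and a second application of Bernstein to $\sum_j(\mathbb{E}Z_j-Z_j)$ delivers the lower-tail inequality in the regime where the corrected deviation is positive.

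The principal technical point will be controlling the $O(C_P/\sqrt{n})$-size mean shift of $\norm{X^{(j)}}_2$ away from $\sqrt{n}$. The hypothesis $\ell<2\sqrt{n}/C_P$ is exactly what keeps the aggregate shift $\sum_j|\mathbb{E}Z_j|$ bounded so that the deviation-based estimate survives; for the lower tail the same shift surfaces as the factor $(1-C_P/(2\sqrt{n}))^\ell$ inside the logarithm. The rest is standard Bernstein bookkeeping.
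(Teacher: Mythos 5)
Your upper-tail argument is correct and is in fact a cleaner route than the paper's: by bounding $\log a \le a-1$ term by term you can transfer $\prod_j \|X^{(j)}\|_2 \ge (1+t)n^{\ell/2}$ directly to $\sum_j (\|X^{(j)}\|_2 - \sqrt{n}) \ge \sqrt{n}\log(1+t)$, and Bernstein for the subexponential $Z_j$ (parameter $O(C_P)$) gives the same exponent as the paper gets with the $Y_j$'s (parameter $O(C_P/\sqrt n)$) after the factor $\sqrt n$ is absorbed. No issues there.

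The lower-tail argument has a genuine gap. You invoke $-\log a \le 2(1-a)$ as ``valid for $a\ge 1/2$,'' but it only holds for $\tfrac12 \le a \le 1$; for $a>1$ the inequality reverses, since $g(a)=2(1-a)+\log a$ satisfies $g(1)=0$ and $g'(a)=2-\tfrac1a>0$, so $g(a)>0$ (i.e.\ $-\log a > 2(1-a)$) whenever $a>1$. This matters because the lower-tail event does not cap the $a_j$'s from above: one factor can be large while another is small, with the product still $\le 1-t$. Concretely, take $\ell=2$, $a_1=\tfrac12$, $a_2=\tfrac32$, $1-t=0.9$: then $\sum_j\log a_j\approx -0.288 \le \log 0.9$, but $\sum_j(1-a_j)=0 < \tfrac12\log\tfrac{1}{0.9}\approx 0.053$, so the claimed inclusion $\{\sum_j\log a_j\le\log(1-t)\}\cap E \subseteq \{\sum_j(1-a_j)\ge\tfrac12\log\tfrac{1}{1-t}\}$ is simply false. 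The paper sidesteps this by keeping the computation entirely on the logarithmic scale: it defines $Y_j=\log(\|X^{(j)}\|_2/\sqrt n)$ and proves directly (via an $L_s$-norm estimate that handles both the upper deviation, using the Paouris large-deviation bound, and the lower deviation, using Poincar\'e and the small-ball estimate) that $\overline{Y_j}=Y_j-\mathbb{E}Y_j$ is subexponential with parameter $O(C_P/\sqrt n)$, and then applies Bernstein to $\sum\overline{Y_j}$. Both tails then follow without any log-to-linear conversion. A further point: you propose to absorb the error $2\ell e^{-c\sqrt n/C_P}$ from $E^c$ into the final bound, but the proposition's lower-tail estimate has no additive error term at all, so this too would have to be justified rather than waved through.
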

\begin{proof}
    Define 
        $$Y_j=\log{\frac{\norm{X^{(j)}}_2}{\sqrt{n}}}.$$
    Note that
        $$\norm{\norm{X^{(j)}}_2}_{L_0}=\lim_{p\rightarrow0}\left(\mathbb{E}\norm{X}_2^p\right)^{\frac{1}{p}}=e^{\mathbb{E}\log{\norm{X}_2}},$$
    and by H\"older's inequality,
        $$\norm{\norm{X^{(j)}}_2}_{L_{-1}}\leq\norm{\norm{X^{(j)}}_2}_{L_0}\leq\norm{\norm{X^{(j)}}_2}_{L_1}\leq\sqrt{n}.$$
    Also by distribution formula
    \begin{align*}
        \mathbb{E}\frac{1}{\norm{X^{(j)}}_2}=&\int_0^{\infty}\mathbb{P}\left(\frac{1}{\norm{X^{(j)}_2}}\geq s\right)\dd s\\
        \leq&\int_0^{\frac{1}{\sqrt{n}-C_P}}1\dd s+\int_{\frac{1}{\sqrt{n}-C_P}}^{\infty}\mathbb{P}\left(\norm{X^{(j)}_2}\leq\frac{1}{s}\right)\dd s\\
        =&\frac{1}{\sqrt{n}-C_P}+\frac{1}{\sqrt{n}}\int_{0}^{1-\frac{C_P}{\sqrt{n}}}\frac{1}{t^2}\mathbb{P}\left(\norm{X^{(j)}_2}\leq t\sqrt{n}\right)\dd t\\
        \leq&\frac{1}{\sqrt{n}-C_P}+\frac{1}{\sqrt{n}}\int_{0}^{\frac{1}{2}}\frac{1}{t^2}t^{c\sqrt{n}}\dd t+\frac{4}{\sqrt{n}}\int_{\frac{1}{2}}^{1-\frac{C_P}{\sqrt{n}}}e^{-\frac{C'(1-t)\sqrt{n}}{C_P}}\dd t\\
        \leq&\frac{1}{\sqrt{n}-C_P}+\left.\frac{1}{\sqrt{n}\left(c\sqrt{n}-1\right)}t^{c\sqrt{n}-1}\right|_0^{\frac{1}{2}}+\left.\frac{4C_P}{\sqrt{n}\left(C'\sqrt{n}\right)}e^{-\frac{C'(1-t)\sqrt{n}}{C_P}}\right|_{\frac{1}{2}}^{1-\frac{C_P}{\sqrt{n}}}\\
        \leq&\frac{1}{\sqrt{n}-C_P}+\frac{1}{2^{c\sqrt{n}}n}+\frac{4C_Pe^{-C'}}{C'n}\\
        \leq&\frac{1}{\sqrt{n}-\frac{C_P}{2}}
    \end{align*}
    where we used \textsc{Theorem} \ref{PaourisSBP}. Then
        $$\sqrt{n}-\frac{C_P}{2}\leq\norm{\norm{X^{(j)}}_2}_{L_0}\leq\mathbb{E}\norm{X^{(j)}}_2\leq\sqrt{n},$$
    \begin{align*}
        \mathbb{E}\left[Y_j\right]=&\mathbb{E}\log{\norm{X^{(j)}}_2}-\log{\sqrt{n}}\\
        =&\log{\norm{\norm{X^{(j)}}_2}_{L_0}}-\log{\sqrt{n}}\\
        \in&\left[\log{\left(1-\frac{C_P}{2\sqrt{n}}\right)},0\right].
    \end{align*}
    Define
        $$\overline{Y_j}=Y_j-\mathbb{E}\left[Y_j\right],$$
    then for $s\geq1$,
    \begin{align*}
        \norm{\overline{Y_1}}_{L_s}^s\leq&\mathbb{E}\left|\log{\frac{\norm{X^{(1)}}_2}{\sqrt{n}}}-\mathbb{E}\left[Y_1\right]\right|^s\\
        =&\int_{0}^{\infty}su^{s-1}\mathbb{P}\left(\left|\log{\frac{\frac{\norm{X^{(1)}}_2}{\sqrt{n}}}{e^{\mathbb{E}\left[Y_1\right]}}}\right|\geq u\right)\dd u\\
        \leq&\int_{0}^{\infty}su^{s-1}\mathbb{P}\left(\frac{\norm{X^{(1)}}_2}{\sqrt{n}}\geq e^{\mathbb{E}\left[Y_1\right]}e^{u}\right)\dd u\\
        +&\int_0^{\infty}su^{s-1}\mathbb{P}\left(\frac{\norm{X^{(1)}}_2}{\sqrt{n}}\leq e^{\mathbb{E}\left[Y_1\right]}e^{-u}\right)\dd u\\
        \leq&\int_{0}^{\infty}su^{s-1}\mathbb{P}\left(\frac{\norm{X^{(1)}}_2}{\sqrt{n}}\geq \left(1-\frac{C_P}{2\sqrt{n}}\right)e^{u}\right)\dd u\\
        +&\int_0^{\infty}su^{s-1}\mathbb{P}\left(\frac{\norm{X^{(1)}}_2}{\sqrt{n}}\leq e^{-u}\right)\dd u\\
        :=&I_1+I_2.
    \end{align*}
    For the first integral, we have
    \begin{align*}
        I_1\leq&\int_0^{a}su^{s-1}\dd u+\int_{a}^{\infty}su^{s-1}\mathbb{P}\left(\frac{\norm{X^{(1)}}_2}{\sqrt{n}}\geq \left(1-\frac{C_P}{2\sqrt{n}}\right)e^{u}\right)\dd u,
    \end{align*}
    where $a\geq-\log{\left(1-\frac{C_P}{2\sqrt{n}}\right)}$. Consider $n$ sufficiently large so that $1-\frac{C_P}{2\sqrt{n}}\geq\frac{1}{2}$. This is guaranteed by Klartag's result. Take $a=\frac{3C_P}{\sqrt{n}}$, then for $u\geq a$, we have
    \begin{align*}
        \left(1-\frac{C_P}{2\sqrt{n}}\right)e^u\geq&\left(1-\frac{C_P}{2\sqrt{n}}\right)(1+u)\\
        =&1+\left(1-\frac{C_P}{2\sqrt{n}}\right)u-\frac{C_P}{2\sqrt{n}}\\
        \geq&1+\frac{1}{3}u+\left(\frac{1}{6}u-\frac{C_P}{2\sqrt{n}}\right)\\
        \geq&1+\frac{1}{3}u,
    \end{align*}
    \begin{align*}
        I_1\leq&\int_0^{\frac{3C_P}{\sqrt{n}}}su^{s-1}\dd u+\int_{\frac{3C_P}{\sqrt{n}}}^{\infty}su^{s-1}\mathbb{P}\left(\norm{X^{(1)}}_2\geq\left(1+\frac{1}{3}u\right)\sqrt{n}\right)\dd u\\
        \leq&\left(\frac{3C_P}{\sqrt{n}}\right)^s+\int_{\frac{3C_P}{\sqrt{n}}}^{\infty}su^{s-1}e^{-\frac{C'u\sqrt{n}}{3C_P}}\dd u\\
        \leq&\left(\frac{3C_P}{\sqrt{n}}\right)^s+\int_{C'}^{\infty}s\left(\frac{3C_Pv}{C'\sqrt{n}}\right)^{s-1}e^{-v}\frac{3C_P}{C'\sqrt{n}}\dd v\\
        \leq&\left(\frac{3C_P}{\sqrt{n}}\right)^s+\left(\frac{3C_P}{C'\sqrt{n}}\right)^ss\Gamma(s)\\
        \leq&\left(\frac{CC_Ps}{\sqrt{n}}\right)^s.
    \end{align*}
    For the second integral, notice that
        $$1-e^{-u}\geq\frac{u}{e}$$
    for $0\leq u\leq1$, then by Poincar\'e's inequality and \textsc{Theorem} \ref{PaourisSBP},
    \begin{align*}
        I_2\leq&\int_0^{1}su^{s-1}e^{-\frac{C'u\sqrt{n}}{eC_P}}\dd u+\int_{1}^{\infty}su^{s-1}\left(e^{-u}\right)^{c\sqrt{n}}\dd u\\  
        \leq&\left(\frac{eC_P}{C'\sqrt{n}}\right)^ss\Gamma(s)+\frac{1}{\left(c\sqrt{n}\right)^s}s\Gamma(s)\\
        =&\left(\frac{CC_Ps}{\sqrt{n}}\right)^s.
    \end{align*}
    Hence
        $$\norm{Y_j}_{L_s}\leq\frac{CC_P}{\sqrt{n}},$$
    By Berstein's inequality (see \cite{Vershynin_2018}), for $t\geq0$,
        $$\mathbb{P}\left(\left|\sum_{j=1}^{\ell}{\overline{Y_j}}\right|\geq t\right)\leq2\exp\left\{-C\min\left(\frac{t^2n}{C^2C_P^2\ell},\frac{t\sqrt{n}}{CC_P}\right)\right\}.$$
    Hence for $t>0$,
    \begin{align*}
        &\mathbb{P}\left(\prod_{j=1}^{\ell}\norm{X^{(j)}}_2\geq(1+t)n^{\frac{\ell}{2}}\right)\leq\mathbb{P}\left(\sum_{j=1}^{\ell}Y_j\geq\log{(1+t)}\right)\\
        \leq&\mathbb{P}\left(\sum_{j=1}^{\ell}\overline{Y_j}\geq\log{(1+t)}\right)\leq \exp\left\{-C\min\left(\frac{n\left(\log{(1+t)}\right)^2}{C^2C_P^2\ell },\frac{\sqrt{n}\left(\log{(1+t)}\right)}{CC_P}\right)\right\},
    \end{align*}
    and for $1-\frac{1}{\left(1+\frac{C}{\sqrt{n}}\right)^{\ell}}\leq t<1$,
    \begin{align*}
        &\mathbb{P}\left(\prod_{j=1}^{\ell}\norm{X^{(j)}}_2\leq(1-t)n^{\frac{\ell}{2}}\right)\\
        \leq&\mathbb{P}\left(\sum_{j=1}^{\ell}Y_j\leq\log{(1-t)}\right)\\
        \leq&\mathbb{P}\left(\sum_{j=1}^{\ell}\overline{Y_j}\leq\log{\left[\frac{1-t}{\left(1-\frac{C_P}{2\sqrt{n}}\right)^{\ell}}\right]}\right)\\
        \leq&\exp\left\{-C\min\left(\frac{n\left(\log{\frac{\left(1-\frac{C_P}{2\sqrt{n}}\right)^{\ell}}{1-t}}\right)^2}{C^2C_P^2\ell },\frac{\sqrt{n}\log{\frac{\left(1-\frac{C_P}{2\sqrt{n}}\right)^{\ell}}{1-t}}}{CC_P}\right)\right\}.
    \end{align*}
    This completes the proof.
\end{proof}
\begin{remark}\label{subgaussian}
    If $X^{(1)},\cdots,X^{(\ell)}$ are independent sub-gaussian random vectors with sub-gaussian norm $C_K$, then for $t>0$,
        $$\mathbb{P}\left(\norm{\otimes_{j=1}^{\ell}X^{(j)}}_2\geq(1+t){n^{\frac{\ell}{2}}}\right)\leq \exp\left\{-C\frac{n\left(\log{(1+t)}\right)^2}{{C'}^2C_K^2\ell}\right\}.$$
    and for $1-\frac{1}{\left(1+\frac{C}{\sqrt{n}}\right)^{\ell}}\leq t<1$,
        $$\mathbb{P}\left(\norm{\otimes_{j=1}^{\ell}X^{(j)}}_2\leq(1-t){n^{\frac{\ell}{2}}}\right)\leq \exp\left\{-C\frac{n\left(\log{\frac{\left(1-\frac{C_P}{2\sqrt{n}}\right)^{\ell}}{1-t}}\right)^2}{{C'}^2C_K^2\ell}\right\}.$$
    
\end{remark}

For log-concave random vectors we have the following small ball property.
\begin{proposition}\label{singular}
    Let $X\in\mathbb{R}^n$ be an isotropic log-concave random vector and let $A\in\mathbb{R}^{m\times n}$ such that $r=rank(A)=\min\{m,n\}$. Let $s_1\geq s_2\geq\cdots\geq s_r$ be the singular values of $A$ and let $A=U\Sigma V$ be its singular value decomposition, then for $0<\varepsilon<1$,
        $$\mathbb{P}\left(\norm{AX}_2\leq \varepsilon s_r\sqrt{r}\right)\leq \left(C\mathcal{L}_r\varepsilon\right)^r,$$
    where the isotropic constant
        $${\mathcal{L}}_{r}:=\sup_F\mathcal{L}_{\Pi_{F}}(\mu)=\sup_F\norm{f_{\Pi_FX}(x)}_{\infty}^{1/r},$$
    here $F$ is an $r$-dimensional subspace of $\mathbb{R}^n$ and $f_{\Pi_FX}(x)$ denotes the density of the marginal distribution $\Pi_FX$.
    In particular,
        $$\mathbb{P}\left(\norm{AX}_2\leq \varepsilon s_r\sqrt{r}\right)\leq \left(C\varepsilon\sqrt{\log{r}}\right)^r.$$
\end{proposition}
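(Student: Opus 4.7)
The plan is to reduce the problem to a small-ball estimate for the projection of $X$ onto an $r$-dimensional subspace, and then bound that by comparing the density to Lebesgue measure on a Euclidean ball.

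First I would use the singular value decomposition $A=U\Sigma V$. Orthogonality of $U$ gives $\norm{AX}_2=\norm{\Sigma VX}_2$. Writing out $\Sigma$ and using $s_r$ as the smallest nonzero singular value, I would note that
\[
\norm{\Sigma VX}_2^2=\sum_{i=1}^r s_i^2\,(VX)_i^2\;\geq\; s_r^2\sum_{i=1}^r (VX)_i^2 \;=\; s_r^2\,\norm{P_rVX}_2^2,
\]
where $P_r$ denotes the projection onto the first $r$ coordinates of $\mathbb{R}^{\max\{m,n\}}$. Consequently the event $\{\norm{AX}_2\leq \varepsilon s_r\sqrt{r}\}$ is contained in the event $\{\norm{Y}_2\leq\varepsilon\sqrt{r}\}$ for $Y:=P_rVX$.

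Next I would identify $Y$ with an orthogonal projection of $X$. Since $V$ is orthogonal, its first $r$ rows $v_1,\dots,v_r$ form an orthonormal system spanning an $r$-dimensional subspace $F\subset\mathbb{R}^n$, and $Y=(\langle v_1,X\rangle,\dots,\langle v_r,X\rangle)$ is exactly $\Pi_FX$ written in the basis $\{v_i\}$. Because $X$ is log-concave and isotropic, so is $Y$, and by the very definition of $\mathcal{L}_r$ given in the statement of \textsc{Theorem} \ref{1.5} we have $\norm{f_Y}_\infty\leq\mathcal{L}_r^{\,r}$.

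Now I would bound the probability by integrating the density against the indicator of $\varepsilon\sqrt{r}\,B_2^r$:
\[
\mathbb{P}\!\left(\norm{Y}_2\leq\varepsilon\sqrt{r}\right)\leq \norm{f_Y}_\infty\cdot\left|\varepsilon\sqrt{r}\,B_2^r\right|=\mathcal{L}_r^{\,r}\,\varepsilon^r\,r^{r/2}\,|B_2^r|.
\]
Using $|B_2^r|=\pi^{r/2}/\Gamma(r/2+1)$ together with Stirling's formula yields $r^{r/2}|B_2^r|\leq C^r$ for a universal constant, which gives the stated bound $(C\mathcal{L}_r\varepsilon)^r$. The "in particular" statement then follows immediately from Klartag's bound $\mathcal{L}_r\leq C\sqrt{\log r}$ mentioned in Section \ref{Preliminaries}.

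There is essentially no serious obstacle here: the argument is a routine reduction via SVD followed by the elementary density-times-volume bound. The only mild subtlety is verifying that $P_rVX$ really is a projection of $X$ onto an $r$-dimensional subspace in an orthonormal basis (so that the isotropic constant $\mathcal{L}_r$ is the right quantity to invoke), which is handled by reading off the rows of the orthogonal matrix $V$ as the basis of $F$.
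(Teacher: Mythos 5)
Your proposal is correct and follows essentially the same route as the paper: both reduce $\norm{AX}_2\geq s_r\norm{\Pi_F X}_2$ for $F$ the row space of $A$ (the paper does this via the decomposition $\mathbb{R}^n=C(A^T)\oplus\mathcal{N}(A)$ in the two cases $r=m$ and $r=n$, which is the same subspace as the span of the first $r$ rows of $V$ in your SVD formulation), and then apply the density-times-volume bound $\norm{f_{\Pi_FX}}_\infty\cdot\mathrm{vol}(\varepsilon\sqrt{r}\,B_2^r)\leq(C\mathcal{L}_r\varepsilon)^r$ together with Klartag's bound on the isotropic constant of the (log-concave, isotropic) marginal. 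Your unified treatment via $P_rV$ is just a slightly cleaner packaging of the paper's two cases.
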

\begin{proof}
    Denote by $f_X$ the deinisty function of $X$.
    
    If $r=m$, then
        $$\mathbb{R}^n=C\left(A^T\right)\oplus\mathcal{N}(A)$$
    where $C\left(A^T\right)$ denotes the row space of $A$ and $\mathcal{N}(A)$ denotes the null space of $A$. For every $x\in\mathbb{R}^n$,
        $$x=\Pi_{C\left(A^T\right)}x+\Pi_{\mathcal{N}(A)}x.$$
    Then
        $$s_m\norm{\Pi_{C\left(A^T\right)}x}_2\leq\norm{Ax}_2=\norm{A\Pi_{C\left(A^T\right)}x}_2\leq s_1\norm{\Pi_{C\left(A^T\right)}x}_2,$$
    and
    \begin{align*}
        \mathbb{P}\left(\norm{AX}_2\leq \varepsilon s_m\sqrt{m}\right)\leq&\mathbb{P}\left(\norm{\Pi_{C\left(A^T\right)}X}_2\leq\varepsilon \sqrt{m}\right)\\
        \leq&\norm{f_{\Pi_{C\left(A^T\right)X}}(x)}_{\infty}\varepsilon^m\text{vol}\left(\mathbb{B}_2^m\right)\\
        \leq&\left(C\mathcal{L}_m\varepsilon\right)^m
    \end{align*}
    By Prékopa–Leindler inequality, $\Pi_{C\left(A^T\right)}X$ is an isotropic log-concave random vector in $\mathbb{R}^m$. Hence
        $$\mathcal{L}_m\leq C\sqrt{\log{m}}.$$
    Hence
        $$\mathbb{P}\left(\norm{AX}_2\leq \varepsilon s_m\sqrt{m}\right)\leq \left(C\varepsilon\sqrt{\log{m}}\right)^m.$$

    If $r=n$, then 
        $$s_n\norm{x}_2\leq\norm{Ax}_2\leq s_1\norm{x}_2.$$
    Hence
    \begin{align*}
        \mathbb{P}\left(\norm{AX}_2\leq \varepsilon s_n\sqrt{n}\right)\leq&\mathbb{P}\left(\norm{X}_2\leq\varepsilon\sqrt{n}\right)\leq\norm{f_X(x)}_{\infty}\varepsilon^n\text{vol}\left(\mathbb{B}_2^n\right)\\
        \leq&\left(\mathcal{L}_n\varepsilon\right)^n\leq\left(C\varepsilon\sqrt{\log{n}}\right)^n\\
    \end{align*}
\end{proof}
We also introduce the following lemma for special orthogonal groups.
\begin{lemma}\label{O(n)}
    (See Section 2.1 in \cite{elizabeth}) Let $U=\left(U_{ij}\right)_{ij}$ be distributed according to Haar measure in $\mathbb{O}(n)$, then
        $$\mathbb{E}\left[U_{ij}U_{i'j'}\right]=\frac{1}{n}\delta^{i',j'}_{i,j}.$$
\end{lemma}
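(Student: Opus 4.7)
The plan is to exploit the bi-invariance of the Haar measure on $\mathbb{O}(n)$: for any fixed orthogonal matrices $V,W$, the random matrices $U$, $VU$, and $UW$ all have the same distribution. I will use sign-flip and permutation matrices as the test matrices $V, W$, since these are the orthogonal transformations that act most transparently on individual entries.

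First I would handle the off-diagonal cases where $(i,j) \neq (i',j')$. Suppose $j \neq j'$, and let $D$ be the diagonal matrix with $D_{jj} = -1$ and $D_{kk} = 1$ for $k \neq j$. Then $D \in \mathbb{O}(n)$, so $UD$ has the same distribution as $U$. The entry $(UD)_{ab}$ equals $-U_{aj}$ when $b = j$ and equals $U_{ab}$ otherwise. Therefore
\begin{equation*}
\mathbb{E}[U_{ij} U_{i'j'}] = \mathbb{E}[(UD)_{ij}(UD)_{i'j'}] = -\mathbb{E}[U_{ij}U_{i'j'}],
\end{equation*}
which forces $\mathbb{E}[U_{ij}U_{i'j'}] = 0$. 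The symmetric argument, multiplying $U$ on the left by a diagonal sign-flip, handles the case $i \neq i'$.

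It remains to treat the diagonal case $(i,j) = (i',j')$. Here I would combine two observations. On the one hand, the rows of $U$ are unit vectors, so $\sum_{j=1}^n U_{ij}^2 = 1$ almost surely, giving $\sum_{j=1}^n \mathbb{E}[U_{ij}^2] = 1$. On the other hand, the invariance of Haar measure under left- and right-multiplication by permutation matrices (which are orthogonal) implies that $\mathbb{E}[U_{ij}^2]$ does not depend on $i$ or $j$. Combining these two facts yields $\mathbb{E}[U_{ij}^2] = 1/n$.

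There is no substantive obstacle in this argument; the only thing worth being careful about is that the same sign-flip trick correctly handles the mixed case where both $i \neq i'$ and $j \neq j'$ (either a row flip or a column flip suffices, since only one of $U_{ij}, U_{i'j'}$ changes sign). The entire proof is essentially a bookkeeping exercise using the left and right invariance of the Haar measure together with the algebraic constraint that rows of an orthogonal matrix are unit and mutually orthogonal.
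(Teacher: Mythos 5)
Your proof is correct. The paper cites this as a known fact (from Meckes' book) without proof, and your argument—sign-flip matrices to kill the off-diagonal covariances, then permutation invariance combined with unit row norms to pin down the diagonal variance as $1/n$—is precisely the standard Haar-invariance argument for this identity.
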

\begin{lemma}\label{Grassmannian}
    Let $\mathcal{F}$ be an event in $\mathbb{O}(n)$. Fix $F_{0}$ a subspace in $G_{n,,m}$ and let $\mathcal{S}_{\mathcal{F}}:= \left\{ F \in G_{n,m} : F = U F_{0} , U \in\mathcal{F}\right\}$. Then
        $$\mathbb{P}_{\mathbb{O}\left(n\right)}\left(\mathcal{F}\right)=\mathbb{P}_{G_{n,m}}\left(\mathcal{S}_{\mathcal{F}}\right).$$
\end{lemma}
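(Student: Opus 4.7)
The plan is to invoke uniqueness of the Haar probability measure on $G_{n,m}$ together with a pushforward argument. The key point is that $\mathbb{O}(n)$ acts transitively on $G_{n,m}$ by $U \cdot F = UF$, so the orbit map $\pi : \mathbb{O}(n) \to G_{n,m}$, $\pi(U) = UF_{0}$, is surjective, and its pushforward of Haar measure on $\mathbb{O}(n)$ must coincide with the (unique) Haar probability measure on $G_{n,m}$.

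Concretely, first I would set $\nu := \pi_{\ast}\,\mu_{\mathbb{O}(n)}$ and verify its $\mathbb{O}(n)$-invariance. For any $V \in \mathbb{O}(n)$ and Borel set $A \subseteq G_{n,m}$ one has $\pi^{-1}(VA) = V \cdot \pi^{-1}(A)$, so by left-invariance of $\mu_{\mathbb{O}(n)}$,
$$\nu(VA) = \mu_{\mathbb{O}(n)}\!\left(V\cdot \pi^{-1}(A)\right) = \mu_{\mathbb{O}(n)}\!\left(\pi^{-1}(A)\right) = \nu(A).$$
Since $\nu$ is an $\mathbb{O}(n)$-invariant Borel probability measure on $G_{n,m}$, uniqueness of such a measure forces $\nu = \mu_{G_{n,m}}$. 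Hence for every Borel $A \subseteq G_{n,m}$,
$$\mathbb{P}_{G_{n,m}}(A) = \mu_{\mathbb{O}(n)}\!\left(\pi^{-1}(A)\right).$$

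Next I would apply this identity with $A = \mathcal{S}_{\mathcal{F}} = \pi(\mathcal{F})$. The subtlety to address is that $\pi$ is not injective: its fibers are right cosets of the stabilizer $H := \mathrm{Stab}(F_{0}) \cong \mathbb{O}(m)\times\mathbb{O}(n-m)$, so in general $\pi^{-1}(\mathcal{S}_{\mathcal{F}}) = \mathcal{F}\cdot H \supseteq \mathcal{F}$. Thus
$$\mathbb{P}_{G_{n,m}}(\mathcal{S}_{\mathcal{F}}) = \mu_{\mathbb{O}(n)}(\mathcal{F}\cdot H) \geq \mu_{\mathbb{O}(n)}(\mathcal{F}) = \mathbb{P}_{\mathbb{O}(n)}(\mathcal{F}),$$
with equality precisely when $\mathcal{F}$ is right-$H$-invariant. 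In the intended application (\textsc{Theorem} \ref{1.5}) the event $\mathcal{F}$ depends only on the subspace $UF_{0}$, so it is automatically $H$-invariant and the stated equality holds. Even without $H$-invariance, the one-sided inequality $\mathbb{P}_{\mathbb{O}(n)}(\mathcal{F})\leq \mathbb{P}_{G_{n,m}}(\mathcal{S}_{\mathcal{F}})$ already suffices to transfer a high-probability event on $\mathbb{O}(n)$ to a high-probability subset of the Grassmannian.

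The argument is short; the only real obstacle is clarifying the $H$-invariance interpretation above, while the uniqueness-of-Haar-measure input is entirely standard (see the reference to \cite{elizabeth} already cited in the paper).
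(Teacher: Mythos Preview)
The paper states this lemma without proof, so there is no argument to compare against. Your pushforward-plus-uniqueness approach is the standard way to establish that the Haar measure on $G_{n,m}$ is the image of the Haar measure on $\mathbb{O}(n)$ under $U\mapsto UF_0$, and it is correct.

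Your caveat about the stabilizer $H=\mathrm{Stab}(F_0)$ is well taken and in fact sharpens the paper's statement: equality $\mathbb{P}_{\mathbb{O}(n)}(\mathcal{F})=\mathbb{P}_{G_{n,m}}(\mathcal{S}_{\mathcal{F}})$ requires $\mathcal{F} H=\mathcal{F}$, which is not asserted in the lemma as written. In the paper's application (proof of \textsc{Theorem}~\ref{1.5}) the event $\mathcal{F}$ is defined through the columns $Ue_1,\dots,Ue_m$ via a fixed finite net $\mathcal{N}_1\subset\mathbb{S}^{m-1}$, so strict $H$-invariance is not obvious; however, your one-sided inequality $\mathbb{P}_{\mathbb{O}(n)}(\mathcal{F})\le \mathbb{P}_{G_{n,m}}(\mathcal{S}_{\mathcal{F}})$ is exactly what the paper needs there, since it only uses the lemma to transfer a lower bound on $\mathbb{P}_{\mathbb{O}(n)}(\mathcal{F})$ to a lower bound on $\mathbb{P}_{G_{n,m}}(\mathcal{S}_{\mathcal{F}})$.
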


Now we are able to prove \textsc{Theorem} \ref{1.5}.
\begin{proof}
    Fix $t$ which will be chosen appropriately later on. We will consider several cases, and in each case, $t$ may be different. Denote
        $$E_t=\left\{X^{(2)},\cdots,X^{(\ell)}:\left|\frac{\prod_{j=2}^{\ell}\norm{X^{(j)}}_2}{n^{\frac{\ell-1}{2}}}-1\right|\leq t\right\},$$
    and 
        $$\overline{X}=X\mathbf{1}_{E_t},$$
        $$\mathcal{T}_{\varepsilon}=\left\{\norm{\Pi_F\otimes_{j=1}^{\ell}X^{(j)}}_2\leq\varepsilon\sqrt{m}\right\},$$
        $$\overline{\mathcal{T}}_{\varepsilon}=\left\{\norm{\Pi_F\otimes_{j=1}^{\ell}\overline{X}^{(j)}}_2\leq\varepsilon\sqrt{m}\right\}.$$
    Notice that
    \begin{align*}
        \mathbb{P}_X\left(\mathcal{T}_{\varepsilon}\right)=\mathbb{P}_X\left(\mathcal{T}_{\varepsilon}\cap E_t\right)+\mathbb{P}_X\left(\mathcal{T}_{\varepsilon}\cap E_t^C\right)\leq\mathbb{P}_{\overline{X}}\left(\overline{\mathcal{T}}_{\varepsilon}\right)\mathbb{P}_X\left(E_t\right)+\mathbb{P}_X\left(E_t^C\right).
    \end{align*}
    Hence it suffices to estimate $\mathbb{P}_{\overline{X}}\left(\mathcal{T}_{\varepsilon}\right)$. Let $U\in\mathbb{O}(n^{\ell})$ be a random special orthogonal matrix in Haar measure. For $F\in\mathbf{G}_{n^{\ell},m}$ in Haar measure, we can write its orthonormal basis as $f^{(k)}=Ue_k, 1\leq k\leq m$ which are the first $m$ columns of $U$. For any $A=\left(a_{i_1i_2\cdots i_{\ell}}\right)_{i_1i_2\cdots i_{\ell}}\in\mathbb{R}^{n^{\ell}}$, we denote by
    $$A^{1,2\cdots\ell}=\left(a_{i_1i_2\cdots i_{\ell}}\right)_{i_1,i_2\cdots i_{\ell}}\in\mathbf{Mat}_{n,n^{\ell-1}}\left(\mathbb{R}\right)$$
    an $n\times n^{\ell-1}$ matrix. And we consider a tensor as a flattened vector when there is no confusion. Define 
    \begin{align*}
        \theta_{X^{(2)},\cdots,X^{(\ell)}}\left(U\right)=&\left(\left(Ue_1\right)^{1,2\cdots \ell}\otimes_{j=2}^{\ell}X^{(j)},\cdots,\left(Ue_m\right)^{1,2\cdots \ell}\otimes_{j=2}^{\ell}X^{(j)}\right)^T\\
        =&\left({f^{(1)}}^{1,2\cdots \ell}\otimes_{j=2}^{\ell}X^{(j)},\cdots,{f^{(m)}}^{1,2\cdots \ell}\otimes_{j=2}^{\ell}X^{(j)}\right)^T.
    \end{align*}
    Then
    \begin{align*}
        \norm{\Pi_F\otimes_{j=1}^{\ell}X^{(j)}}_2^2=&\sum_{k=1}^m\left|\left\langle f^{(k)},\otimes_{j=1}^{\ell} X^{(j)}\right\rangle\right|^2=\sum_{k=1}^m\left|\left\langle {f^{(k)}}^{1,2\cdots \ell}\otimes_{j=2}^{\ell}X^{(j)},X^{(1)}\right\rangle\right|^2\\
        =&\norm{\theta_{X^{(2)},\cdots,X^{(\ell)}}(U)X^{(1)}}_2^2,
    \end{align*}
    and
        $$\norm{\Pi_F\otimes_{j=1}^{\ell}\overline{X}^{(j)}}_2^2=\norm{\theta_{\overline{X}^{(2)},\cdots,\overline{X}^{(\ell)}}(U)\overline{X}^{(1)}}_2^2.$$
    Notice that $f^{(k)}$ is the $k$-th column of $U$, then by \textsc{Lemma} \ref{O(n)}
        $$\mathbb{E}_U\left[f_{i_1\cdots i_{\ell}}^{(k)}f_{i'_1\cdots i'_{\ell}}^{(k')}\right]=\frac{1}{n^{\ell}}\delta_{k,i_1\cdots,i_{\ell}}^{k',i'_1,\cdots,i'_{\ell}},$$
    In the rest of the proof, we sometimes write $\theta=\theta_{\overline{X}^{(2)},\dots,\overline{X}^{(\ell)}}\left(U\right)$ if there is no confusion.
    
    \textbf{Case 1}: Suppose $m\leq C_1n$, where $0<C_1<1$ is a constant that is determined later. Then for any $\phi\in\mathbb{S}^{m-1}$,
        $$\norm{\theta_{\overline{X}^{(2)},\dots,\overline{X}^{(\ell)}}\left(U\right)^T\phi}_2^2=\sum_{i_1=1}^n\left(\sum_{k=1}^m\phi_k\sum_{i_2\cdots i_{\ell}}f_{i_1\cdots i_{\ell}}^{(k)}\overline{X}_{i_2}^{(2)}\cdots \overline{X}_{i_{\ell}}^{(\ell)}\right)^2.$$
    Fix $X^{(2)},\cdots,X^{(\ell)}$ and define $\mathcal{A}_{\phi}(U)=\norm{\theta_{\overline{X}^{(2)},\dots,\overline{X}^{(\ell)}}\left(U\right)^T\phi}_2$. Then for $U,U'\in\mathbb{O}(n^{\ell})$,
    \begin{align*}
        \left|\mathcal{A}_{\phi}(U)-\mathcal{A}_{\phi}(U')\right|=&\left|\norm{\sum_{k=1}^m\phi_k\left(Ue_k\right)^{1,2\cdots \ell}\otimes_{j=2}^{\ell}\overline{X}^{(j)}}_2-\norm{\sum_{k=1}^m\phi_k\left(U'e_k\right)^{1,2\cdots \ell}\otimes_{j=2}^{\ell}\overline{X}^{(j)}}_2\right|\\
        \leq&\norm{\left(\sum_{k=1}^m\phi_k\left[\left(U-U'\right)e_k\right]^{1,2\cdots \ell}\right)\otimes_{j=2}^{\ell}\overline{X}^{(j)}}_2
    \end{align*}
    Notice that $\sum_{k=1}^m\phi_k\left[\left(U-U'\right)e_k\right]^{1,2\cdots \ell}\in\mathbf{Mat}_{n,n^{\ell-1}}$ and $\otimes_{j=2}^{\ell}\overline{X}^{(j)}\in\mathbb{R}^{\ell-1}$, then
    \begin{align*}
        \norm{\left(\sum_{k=1}^m\phi_k\left[\left(U-U'\right)e_k\right]^{1,2\cdots \ell}\right)\otimes_{j=2}^{\ell}\overline{X}^{(j)}}_2\leq&\norm{\sum_{k=1}^m\phi_k\left[\left(U-U'\right)e_k\right]^{1,2\cdots \ell}}_{HS}\cdot\prod_{j=2}^{\ell}\norm{\overline{X}^{(j)}}_2\\
        =&\norm{\sum_{k=1}^m\phi_k\left(U-U'\right)e_k}_2\cdot\prod_{j=2}^{\ell}\norm{\overline{X}^{(j)}}_2\\
        \leq&(1+t)n^{\frac{\ell-1}{2}}\norm{U-U'}_{HS}.
    \end{align*}
    Hence $\mathcal{A}_{\phi}$ has Lipschitz constant less than or equal to $(1+t)n^{\frac{\ell-1}{2}}$. And
        $$\mathbb{E}_U\left[\mathcal{A}_{\phi}(U)^2\right]=\sum_{i_1=1}^n\sum_{i_2,\cdots,i_{\ell}}\frac{1}{n^{\ell}}{\overline{X}_{i_2}^{(2)}}^2\cdots {\overline{X}_{i_{\ell}}^{(\ell)}}^2=\frac{\prod_{j=2}^{\ell}\norm{\overline{X}_{i_j}^{(j)}}_2^2}{n^{\ell-1}}.$$
    Let $\mathcal{N}_1$ be a $\delta$-net on $\mathbb{R}^m$, $\forall\phi\in\mathbb{S}^{m-1}$, $\exists\phi'\in\mathcal{N}_1$, such that
        $$\norm{\phi-\phi'}\leq\delta.$$
    Recall $U\in\mathbb{O}\left(n^{\ell}\right)$. For every $\phi\in\mathcal{N}_1$, by Minkowski's inequality we have
    \begin{align*}
        &\left(\mathbb{E}_{\overline{X},U}\left|\mathcal{A}_{\phi}(U)-1\right|^q\right)^{\frac{1}{q}}\\
        \leq&\left(\mathbb{E}_{\overline{X},U}\left|\mathcal{A}_{\phi}(U)-\frac{\prod_{j=2}^{\ell}\norm{\overline{X}^{(j)}}_2}{n^{\frac{\ell-1}{2}}}\right|^q\right)^{\frac{1}{q}}+\left(\mathbb{E}_{\overline{X},U}\left|\frac{\prod_{j=2}^{\ell}\norm{\overline{X}^{(j)}}_2}{n^{\frac{\ell-1}{2}}}-1\right|^q\right)^{\frac{1}{q}}.
    \end{align*}
    For the first part, we apply \textsc{Corollary} \ref{moment} for the $1$-Lipschitz function $\frac{\mathcal{A}_{\phi}}{(1+t)n^{\frac{\ell-1}{2}}}$
    \begin{align*}
        \left(\mathbb{E}_{\overline{X},U}\left|\mathcal{A}_{\phi}(U)-\frac{\prod_{j=2}^{\ell}\norm{\overline{X}^{(j)}}_2}{n^{\frac{\ell-1}{2}}}\right|^q\right)^{\frac{1}{q}}\leq\left(\frac{Cq}{n^{\ell}}\right)^{\frac{1}{2}}(1+t)n^{\frac{\ell-1}{2}}\leq C(1+t)\sqrt{\frac{q}{n}}.
    \end{align*}
    For the second part, recall the definition of $\overline{X}$,
    \begin{align*}
        \left(\mathbb{E}_{\overline{X},U}\left|\frac{\prod_{j=2}^{\ell}\norm{\overline{X}^{(j)}}_2}{n^{\frac{\ell-1}{2}}}-1\right|^q\right)^{\frac{1}{q}}=t.
    \end{align*}
    Choose $q=\frac{(\eta-1)^2t^2}{C^2(1+t)^2}n$, where $\eta>1$. Then
        $$\mathbb{E}_U\mathbb{E}_{\overline{X}}\left|\mathcal{A}_{\phi}(U)-1\right|^{\frac{(\eta-1)^2t^2}{C^2(1+t)^2}n}=\mathbb{E}_{\overline{X},U}\left|\mathcal{A}_{\phi}(U)-1\right|^{\frac{(\eta-1)^2t^2}{C^2(1+t)^2}n}\leq\left(\eta t\right)^{\frac{(\eta-1)^2t^2}{C^2(1+t)^2}n}.$$
    Then by Markov's inequality, for $1<\sigma_1<\frac{e}{e-1}$,
        $$\mathbb{P}_U\left(\mathbb{E}_{\overline{X}}\left|\mathcal{A}_{\phi}(U)-1\right|^{\frac{(\eta-1)^2t^2}{C^2(1+t)^2}n}\geq \left(\sigma_1\eta t\right)^{\frac{(\eta-1)^2t^2}{C^2(1+t)^2}n}\right)\leq e^{-\log{(\sigma_1)}\frac{(\eta-1)^2t^2}{C^2(1+t)^2}n}\leq e^{-\log{(\sigma_1)}\frac{(\eta-1)^2t^2}{4C^2}n}.$$
    Denote by $\mathcal{F}$ the event in $\mathbb{O}\left(n^{\ell}\right)$ that 
        $$\mathbb{E}_{\overline{X}}\left|\mathcal{A}_{\phi}(U)-1\right|^{\frac{(\eta-1)^2t^2}{C^2(1+t)^2}n}\leq(\sigma_1\eta t)^{\frac{(\eta-1)^2t^2}{C^2(1+t)^2}n}$$
    for every $\phi\in\mathcal{N}_1$, then
        $$\mathbb{P}_U\left(\mathcal{F}\right)\geq1-\left(\frac{3}{\delta}\right)^me^{-\log{(\sigma_1)}\frac{(\eta-1)^2t^2}{4C^2}n}.$$
    We now fix a $U$ in the event $\mathcal{F}$. For every $\phi\in\mathcal{N}_1$, and for $1<\sigma_2<\frac{e}{\sigma_1(e-1)}$,
        $$\mathbb{P}_{\overline{X}}\left(\left\{\left|\mathcal{A}_{\phi}(U)-1\right|\geq \sigma_1\sigma_2\eta t\right\}\right)\leq e^{-\log{(\sigma_2)}\frac{(\eta-1)^2t^2}{4C^2}n}.$$
    Note that for $0<\delta<1$,
    \begin{align*}
        s_1\left(\theta^T\right)=&\max_{\phi\in\mathbb{S}^{m-1}}\norm{\theta^T\phi}_2\\
        \leq&\norm{\theta^T\phi'}_2+\max_{\phi\in\mathbb{S}^{m-1}}\{\theta^T\left(\phi-\phi'\right)\}\\
        \leq&\norm{\theta^T\phi'}_2+\delta\, s_1\left(\theta^T\right),
    \end{align*}
        $$s_1\left(\theta^T\right)\leq\frac{\norm{\theta^T\phi'}_2}{1-\delta}.$$
    Hence
    \begin{align*}
        \mathbb{P}_{\overline{X}}\left(s_1\left(\theta^T\right)\geq1+\sigma_1\sigma_2\eta t\right)\leq&\mathbb{P}_{\overline{X}}\left(\exists\phi'\in\mathcal{N}_1,\norm{\theta^T\phi'}_2\geq\left(1-\delta\right)\left(1+\sigma_1\sigma_2\eta t\right)\right)\\
        \leq&\left(\frac{3}{\delta}\right)^m\exp\left\{-\log{(\sigma_2)}\frac{(\eta-1)^2\left(\sigma_1\sigma_2\eta t-\delta\sigma_1\sigma_2\eta t-\delta\right)^2}{4C^2\sigma_1^2\sigma_2^2\eta^2}n\right\}.
    \end{align*}
    On the other hand, $\forall\phi\in\mathbb{S}^{m-1}$, $\exists\phi'\in\mathcal{N}_1$, such that
        $$s_n=\min_{\phi\in\mathbb{S}^{m-1}}\norm{\theta^T\phi}_2\geq\norm{\theta^T\phi'}-\delta \,s_1\left(\theta^T\right).$$
    Hence
    \begin{align*}
        &\mathbb{P}_{\overline{X}}\left(s_m\left(\theta^T\right)\leq1-\sigma_1\sigma_2\eta t\right)\\
        \leq&\mathbb{P}_{\overline{X}}\left(\exists\phi'\in\mathcal{N}_1,\norm{\theta^T\phi'}_2\leq1-\sigma_1\sigma_2\eta t+\delta\left(1+\sigma_1\sigma_2\eta t\right)\right)+\mathbb{P}_{\overline{X}}\left(s_1\left(\theta^T\right)\geq1+\sigma_1\sigma_2\eta t\right)\\
        \leq&2\left(\frac{3}{\delta}\right)^m\exp\left\{-\log{(\sigma_2)}\frac{(\eta-1)^2\left(\sigma_1\sigma_2\eta t-\delta\sigma_1\sigma_2\eta t-\delta\right)^2}{4C^2\sigma_1^2\sigma_2^2\eta^2}n\right\}.
    \end{align*}
    Pick $\delta=\frac{\sigma_1\sigma_2\eta t}{4}$ so that $\sigma_1\sigma_2\eta t-\delta \sigma_1\sigma_2\eta t-\delta\geq\frac{\sigma_1\sigma_2\eta t}{2}$, then
        $$\mathbb{P}_{\overline{X}}\left(s_m\left(\theta^T\right)\leq1-\sigma_1\sigma_2\eta t\right)\leq2\exp\left\{m\log{\left(\frac{12}{\sigma_1\sigma_2\eta t}\right)}-\log{(\sigma_2)}\frac{(\eta-1)^2t^2}{16C^2}n\right\}.$$
    Fix $\sigma_1=\sigma_2=\eta=\left(\frac{e}{e-1}\right)^{\frac{1}{12}}$ and fix $t=\left(\frac{e-1}{e}\right)^{\frac{1}{2}}$. Recall that $m\leq C_1n$, then we take
        $$C_1=\frac{\left[\left(\frac{e}{e-1}\right)^{\frac{1}{12}}-1\right]^2\frac{e-1}{e}\log{\frac{e}{e-1}}}{384C^2\log{12\left(\frac{e}{e-1}\right)^{\frac{1}{4}}}},$$
    then
        $$\mathbb{P}_{\overline{X}}\left(s_m\left(\theta^T\right)\leq1-\left(\frac{e-1}{e}\right)^{\frac{1}{4}}\right)\leq2\exp\left\{-\frac{\left[\left(\frac{e}{e-1}\right)^{\frac{1}{12}}-1\right]^2\frac{e-1}{e}\log{\frac{e}{e-1}}}{384C^2}n\right\}.$$
    By \textsc{Lemma} \ref{Grassmannian}, the event $\mathcal{F}$ in $\mathbb{O}\left(n^{\ell}\right)$ determines a subset $S_{\mathcal{F}}\subset\mathbf{G}_{n^{\ell},m}$ with measure at least $1-e^{-cn}$, for every $F\in S_{\mathcal{F}}$,
    \begin{align*}
        \mathbb{P}_{\overline{X}}\left(\overline{\mathcal{T}}_{\varepsilon}\right)=&\mathbb{P}_{\overline{X}}\left(\norm{\Pi_F\otimes_{j=1}^{\ell}\overline{X}^{(j)}}_2\leq \varepsilon\sqrt{m}\right)\\
        \leq&\mathbb{P}_{\overline{X}}\left(\norm{\Pi_F\otimes_{j=1}^{\ell}\overline{X}^{(j)}}_2\leq\frac{\varepsilon}{1-\left(\frac{e-1}{e}\right)^{\frac{1}{4}}} s_m\sqrt{m}\right)+\mathbb{P}_{\overline{X}}\left(s_m\leq 1-\left(\frac{e-1}{e}\right)^{\frac{1}{4}}\right)\\
        \leq&\left(C\varepsilon\mathcal{L}_m\right)^m+2e^{-cn},
    \end{align*}
    and by \textsc{Proposition} \ref{2-sided},
    \begin{align*}
        \mathbb{P}_X\left(\mathcal{T}_{\varepsilon}\right)\leq\mathbb{P}_{\overline{X}}\left(\overline{\mathcal{T}}_{\varepsilon}\right)\mathbb{P}_X\left(E_t\right)+\mathbb{P}_X\left(E_t^C\right)\leq\left(C\varepsilon\mathcal{L}_m\right)^m+e^{-\frac{c\sqrt{n}}{C_P}}.
    \end{align*}
    
    \textbf{Case 2}: Suppose $m\geq C_2n$, where $C_2>1$ is a constant that is determined later. Then for any $\psi\in\mathbb{S}^{n-1}$,
        $$\norm{\theta_{\overline{X}^{(2)},\dots,\overline{X}^{(\ell)}}\psi}_2^2=\sum_{k=1}^m\left|\left\langle {f^{(k)}}^{1,2\cdots \ell}\otimes_{j=2}^{\ell}\overline{X}^{(j)},\psi\right\rangle\right|^2=\sum_{k=1}^m\left(\sum_{i_1=1}\psi_{i_1}\sum_{i_2,\cdots,i_{\ell}}f^{(k)}_{i_1\cdots i_{\ell}}\overline{X}^{(2)}_{i_2}\cdots \overline{X}^{(\ell)}_{i_{\ell}}\right)^2,$$
    and
        $$\mathbb{E}_{\overline{X},U}\norm{\theta_{\overline{X}^{(2)},\dots,\overline{X}^{(\ell)}}\left(U\right)\psi}_2^2=\sum_{k=1}^m\sum_{i_1=1}^n\psi_{i_1}^2\sum_{i_2,\cdots,i_{\ell}}\frac{1}{n^{\ell}}=\frac{m}{n}.$$
     Define $\mathcal{B}_{\psi}(U)=\norm{\theta_{\overline{X}^{(2)},\dots,\overline{X}^{(\ell)}}\left(U\right)\psi}_2$, then for $U,U'\in\mathbb{O}(n^{\ell})$,
    \begin{align*}
        \left|\mathcal{B}_{\psi}(U)-\mathcal{B}_{\psi}(U')\right|=&\left|\norm{\theta(U)\psi}_2-\norm{\theta\left(U'\right)\psi}_2\right|\\
        \leq&\norm{\left(\theta(U)-\theta\left(U'\right)\right)\psi}_2\\
        =&\norm{\theta(U)-\theta\left(U'\right)}_2\norm{\psi}_2\\
        =&\sqrt{\sum_{k=1}^m\norm{\left[\left(U-U'\right)e_k\right]^{1,2...\ell}\otimes_{j=2}^{\ell}\overline{X}^{(j)}}}_2\\
        \leq&\norm{U-U'}_2\norm{\otimes_{j=2}^{\ell}\overline{X}^{(j)}}_2\\
        \leq&(1+t)n^{\frac{\ell-1}{2}}\norm{U-U'}_{HS}.
    \end{align*}
    Hence $\mathcal{B}$ has Lipschitz constant less than or equal to $(1+t)n^{\frac{\ell-1}{2}}$. And
        $$\mathbb{E}_U\left[{\mathcal{B}_{\psi}(U)}^2\right]=\sum_{k=1}^m\sum_{i_2,\cdots,i_{\ell}}\frac{1}{n^{\ell}}{\overline{X}_{i_2}^{(2)}}^2\cdots{\overline{X}_{i_{\ell}}^{(\ell)}}^2=\frac{m\prod_{j=2}^{\ell}\norm{\overline{X}^{(j)}}_2^2}{n^{\ell}}$$
    Let $\mathcal{N}_2$ be a $\delta$-net on $\mathbb{R}^n$, $\forall\psi\in\mathbb{S}^{n-1}$, $\exists\psi'\in\mathcal{N}_2$, such that
        $$\norm{\psi-\psi'}\leq\delta.$$
    Recall $U\in\mathbb{O}\left(n^{\ell}\right)$. For every $\psi\in\mathcal{N}_2$, by Minkowski's inequality we have
    \begin{align*}
        &\left(\mathbb{E}_{\overline{X},U}\left|\mathcal{B}_{\psi}(U)-\sqrt{\frac{m}{n}}\right|^q\right)^{\frac{1}{q}}\\
        \leq&\left(\mathbb{E}_{\overline{X},U}\left|\mathcal{B}_{\psi}(U)-\frac{\sqrt{m}\prod_{j=2}^{\ell}\norm{\overline{X}^{(j)}}_2}{n^{\frac{\ell}{2}}}\right|^q\right)^{\frac{1}{q}}+\left(\mathbb{E}_{\overline{X},U}\left|\frac{\sqrt{m}\prod_{j=2}^{\ell}\norm{\overline{X}^{(j)}}_2}{n^{\frac{\ell}{2}}}-\sqrt{\frac{m}{n}}\right|^q\right)^{\frac{1}{q}}.
    \end{align*}
    For the first part, we apply \textsc{Corollary} \ref{moment} for the $1$-Lipschitz function $\frac{\mathcal{B}_{\psi}}{(1+t)n^{\frac{\ell-1}{2}}}$
    \begin{align*}
        \left(\mathbb{E}_{\overline{X},U}\left|\mathcal{B}_{\psi}(U)-\frac{\sqrt{m}\prod_{j=2}^{\ell}\norm{\overline{X}^{(j)}}_2}{n^{\frac{\ell}{2}}}\right|^q\right)^{\frac{1}{q}}\leq\left(\frac{Cq}{n^{\ell}}\right)^{\frac{1}{2}}(1+t)n^{\frac{\ell-1}{2}}\leq C(1+t)\sqrt{\frac{q}{n}}.
    \end{align*}
    For the second part, recall the definition of $\overline{X}$,
    \begin{align*}
        \left(\mathbb{E}_{\overline{X},U}\left|\frac{\sqrt{m}\prod_{j=2}^{\ell}\norm{\overline{X}^{(j)}}_2}{n^{\frac{\ell}{2}}}-\sqrt{\frac{m}{n}}\right|^q\right)^{\frac{1}{q}}=t\sqrt{\frac{m}{n}}.
    \end{align*}

    Choose $q=\frac{(\eta-1)^2t^2}{C^2(1+t)^2}m$, where $\eta>1$. Then
        $$\mathbb{E}_U\mathbb{E}_{\overline{X}}\left|\mathcal{B}_{\psi}(U)-\sqrt{\frac{m}{n}}\right|^{\frac{(\eta-1)^2t^2}{C^2(1+t)^2}m}=\mathbb{E}_{\overline{X},U}\left|\mathcal{B}_{\psi}(U)-\sqrt{\frac{m}{n}}\right|^{\frac{(\eta-1)^2t^2}{C^2(1+t)^2}m}\leq\left(\eta t\sqrt{\frac{m}{n}}\right)^{\frac{(\eta-1)^2t^2}{C^2(1+t)^2}m}.$$
    Then by Markov's inequality, for $1<\sigma_1<\frac{e}{e-1}$,
        $$\mathbb{P}_U\left(\mathbb{E}_{\overline{X}}\left|\mathcal{B}_{\psi}(U)-\sqrt{\frac{m}{n}}\right|^{\frac{(\eta-1)^2t^2}{C^2(1+t)^2}m}\geq \left(\sigma_1\eta t\sqrt{\frac{m}{n}}\right)^{\frac{(\eta-1)^2t^2}{C^2(1+t)^2}m}\right)\leq e^{-\log{(\sigma_1)}\frac{(\eta-1)^2t^2}{C^2(1+t)^2}m}\leq e^{-\log{(\sigma_1)}\frac{(\eta-1)^2t^2}{4C^2}m}.$$
    Denote by $\mathcal{F}$ the event in $\mathbb{O}\left(n^{\ell}\right)$ that 
        $$\mathbb{E}_{\overline{X}}\left|\mathcal{B}_{\psi}(U)-\sqrt{\frac{m}{n}}\right|^{\frac{(\eta-1)^2t^2}{C^2(1+t)^2}m}\leq\left(\sigma_1\eta t\sqrt{\frac{m}{n}}\right)^{\frac{(\eta-1)^2t^2}{C^2(1+t)^2}m}$$
    for every $\psi\in\mathcal{N}_2$, then
        $$\mathbb{P}_U\left(\mathcal{F}\right)\geq1-\left(\frac{3}{\delta}\right)^ne^{-\log{(\sigma_1)}\frac{(\eta-1)^2t^2}{4C^2}m}.$$
    We now fix a $U$ in the event $\mathcal{F}$. For every $\psi\in\mathcal{N}_2$, and for $1<\sigma_2<\frac{e}{\sigma_1(e-1)}$,
        $$\mathbb{P}_{\overline{X}}\left(\left\{\left|\mathcal{B}_{\psi}(U)-\sqrt{\frac{m}{n}}\right|\geq \sigma_1\sigma_2\eta t\sqrt{\frac{m}{n}}\right\}\right)\leq e^{-\log{(\sigma_2)}\frac{(\eta-1)^2t^2}{4C^2}m}.$$
    Note that for $0<\delta<1$,
    \begin{align*}
        s_1\left(\theta\right)=&\max_{\psi\in\mathbb{S}^{n-1}}\norm{\theta\psi}_2\\
        \leq&\norm{\theta\psi'}_2+\max_{\psi\in\mathbb{S}^{n-1}}\{\theta\left(\psi-\psi'\right)\}\\
        \leq&\norm{\theta\psi'}_2+\delta\, s_1\left(\theta\right),
    \end{align*}
        $$s_1\left(\theta\right)\leq\frac{\norm{\theta\psi'}_2}{1-\delta}.$$
    Hence
    \begin{align*}
        \mathbb{P}_{\overline{X}}\left(s_1\left(\theta\right)\geq\left(1+\sigma_1\sigma_2\eta t\right)\sqrt{\frac{m}{n}}\right)\leq&\mathbb{P}_{\overline{X}}\left(\exists\psi'\in\mathcal{N}_2,\norm{\theta\psi'}_2\geq\left(1-\delta\right)\left(1+\sigma_1\sigma_2\eta t\right)\sqrt{\frac{m}{n}}\right)\\
        \leq&\left(\frac{3}{\delta}\right)^n\exp\left\{-\log{(\sigma_2)}\frac{(\eta-1)^2\left(\sigma_1\sigma_2\eta t-\delta\sigma_1\sigma_2\eta t-\delta\right)^2}{4C^2\sigma_1^2\sigma_2^2\eta^2}m\right\}.
    \end{align*}
    On the other hand, $\forall\psi\in\mathbb{S}^{n-1}$, $\exists\psi'\in\mathcal{N}_2$, such that
        $$s_n=\min_{\psi\in\mathbb{S}^{n-1}}\norm{\theta\psi}_2\geq\norm{\theta\psi'}-\delta \,s_1\left(\theta\right).$$
    Hence
    \begin{align*}
        &\mathbb{P}_{\overline{X}}\left(s_n\left(\theta\right)\leq\left(1-\sigma_1\sigma_2\eta t\right)\sqrt{\frac{m}{n}}\right)\\
        \leq&\mathbb{P}_{\overline{X}}\left(\exists\psi'\in\mathcal{N}_2,\norm{\theta\psi'}_2\leq\left[1-\sigma_1\sigma_2\eta t+\delta\left(1+\sigma_1\sigma_2\eta t\right)\right]\sqrt{\frac{m}{n}}\right)+\mathbb{P}_{\overline{X}}\left(s_1\left(\theta\right)\geq\left(1+\sigma_1\sigma_2\eta t\right)\sqrt{\frac{m}{n}}\right)\\
        \leq&2\left(\frac{3}{\delta}\right)^n\exp\left\{-\log{(\sigma_2)}\frac{(\eta-1)^2\left(\sigma_1\sigma_2\eta t-\delta\sigma_1\sigma_2\eta t-\delta\right)^2}{4C^2\sigma_1^2\sigma_2^2\eta^2}m\right\}.
    \end{align*}
    Pick $\delta=\frac{\sigma_1\sigma_2\eta t}{4}$ so that $\sigma_1\sigma_2\eta t-\delta \sigma_1\sigma_2\eta t-\delta\geq\frac{\sigma_1\sigma_2\eta t}{4}$, then
        $$\mathbb{P}_{\overline{X}}\left(s_n\left(\theta\right)\leq\left(1-\sigma_1\sigma_2\eta t\right)\sqrt{\frac{m}{n}}\right)\leq2\exp\left\{n\log{\left(\frac{12}{\sigma_1\sigma_2\eta t}\right)}-\log{(\sigma_2)}\frac{(\eta-1)^2t^2}{16C^2}m\right\}.$$
    Fix $\sigma_1=\sigma_2=\eta=\left(\frac{e}{e-1}\right)^{\frac{1}{12}}$ and fix $t=\left(\frac{e-1}{e}\right)^{\frac{1}{2}}$. Recall that $m> C_2n$, then we take
        $$C_2=\frac{384C^2\log{12\left(\frac{e}{e-1}\right)^{\frac{1}{4}}}}{\left[\left(\frac{e}{e-1}\right)^{\frac{1}{12}}-1\right]^2\frac{e-1}{e}\log{\frac{e}{e-1}}},$$
    then
        $$\mathbb{P}_{\overline{X}}\left(s_n\left(\theta\right)\leq\left[1-\left(\frac{e-1}{e}\right)^{\frac{1}{4}}\right]\sqrt{\frac{m}{n}}\right)\leq2\exp\left\{-\log{12\left(\frac{e}{e-1}\right)^{\frac{1}{4}}}n\right\}.$$
    By \textsc{Lemma} \ref{Grassmannian}, the event $\mathcal{F}$ in $\mathbb{O}\left(n^{\ell}\right)$ determines a subset $S_{\mathcal{F}}\subset\mathbf{G}_{n^{\ell},m}$ with measure at least $1-e^{-cn}$, for every $F\in S_{\mathcal{F}}$,
    \begin{align*}
        \mathbb{P}_{\overline{X}}\left(\overline{\mathcal{T}}_{\varepsilon}\right)=&\mathbb{P}_{\overline{X}}\left(\norm{\Pi_F\otimes_{j=1}^{\ell}\overline{X}^{(j)}}_2\leq \varepsilon\sqrt{m}\right)\\
        \leq&\mathbb{P}_{\overline{X}}\left(\norm{\Pi_F\otimes_{j=1}^{\ell}\overline{X}^{(j)}}_2\leq\frac{\varepsilon}{1-\left(\frac{e-1}{e}\right)^{\frac{1}{4}}} s_n\sqrt{n}\right)+\mathbb{P}_{\overline{X}}\left(s_n\leq \left[1-\left(\frac{e-1}{e}\right)^{\frac{1}{4}}\right]\sqrt{\frac{m}{n}}\right)\\
        \leq&\left(C\varepsilon\mathcal{L}_n\right)^n+2e^{-cn},
    \end{align*}
    and by \textsc{Proposition} \ref{2-sided},
    \begin{align*}
        \mathbb{P}_X\left(\mathcal{T}_{\varepsilon}\right)\leq\mathbb{P}_{\overline{X}}\left(\overline{\mathcal{T}}_{\varepsilon}\right)\mathbb{P}_X\left(E_t\right)+\mathbb{P}_X\left(E_t^C\right)\leq\left(C\varepsilon\mathcal{L}_n\right)^n+e^{-\frac{c\sqrt{n}}{C_P}}.
    \end{align*}

    \textbf{Case 3}: Suppose $C_1n\leq m\leq C_2n$. We partition the $\theta$ into $N=\frac{m}{C_1n}\leq\frac{C_2}{C_1}$ matrices $\theta_1,\cdots,\theta_N$ of dimension $C_1n\times n$ as follows
        $$\theta=\left(\theta_1^T,\cdots,\theta_N^T\right)^T.$$
    Then
        $$\norm{\Pi_F\otimes_{j=1}^{\ell}\overline{X}^{(j)}}_2^2=\norm{\theta X^{(1)}}_2^2=\sum_{p=1}^N\norm{\theta_pX^{(1)}}_2^2.$$
    By a union bound, there exists a subset $S_{\mathcal{F}}\subset\mathbf{G}_{n^{\ell},m}$, which is an intersection of subsets where $\mathbb{P}\left(\norm{\theta_p\overline{X}^{(1)}}_2\leq \varepsilon\sqrt{C_1n}\right)\leq\left(C\varepsilon\mathcal{L}_{C_1n}\right)^{C_1n}+2e^{-cC_1n}$ for $1\leq p\leq N$, with measure at least $1-\frac{C_2}{C_1}e^{-cC_1n}$, such that for every $F\in S_{\mathcal{F}}$,
    \begin{align*}
        \mathbb{P}_{\overline{X}}\left(\overline{\mathcal{T}}_{\varepsilon}\right)=&\mathbb{P}\left(\norm{\Pi_F\otimes_{j=1}^{\ell}\overline{X^{(j)}}_2\leq \varepsilon\sqrt{m}}\right)\\
        \leq&\mathbb{P}\left(\sum_{p=1}^N\norm{\theta_p\overline{X}^{(1)}}_2^2\leq \varepsilon^2m\right)\\
        \leq&\sum_{p=1}^N\mathbb{P}\left(\norm{\theta_p\overline{X}^{(1)}}_2\leq \varepsilon\sqrt{C_1n}\right)\\
        \leq&\frac{C_2}{C_1}\left(\left(C\varepsilon\mathcal{L}_{C_1n}\right)^{C_1n}+2e^{-cC_1n}\right),
    \end{align*}
    and by \textsc{Proposition} \ref{2-sided},
    \begin{align*}
        \mathbb{P}_X\left(\mathcal{T}_{\varepsilon}\right)\leq\mathbb{P}_{\overline{X}}\left(\overline{\mathcal{T}}_{\varepsilon}\right)\mathbb{P}_X\left(E_t\right)+\mathbb{P}_X\left(E_t^C\right)\leq\frac{C_2}{C_1}\left(C\varepsilon\mathcal{L}_{C_1n}\right)^{C_1n}+e^{-\frac{c\sqrt{n}}{C_P}}.
    \end{align*}
    That concludes the proof of \textsc{Theorem} \ref{1.5}.
\end{proof}

\section{Proof of \textsc{Theorem} \ref{1.1} and \ref{1.2}}
Now we first prove \textsc{Theorem} \ref{1.1}. 
\begin{proof}
Let $X^{(j)}\in\mathbb{R}^{n_j}, 1\leq j\leq\ell$ be independent random vectors with independent coordinates whose densities have uniform norms bounded by $M>0$ and let $z_1,\cdots,z_{\ell}\in\mathbb{R}^n$. Suppose $F$ is a subspace in $\mathbb{R}^{n_1\otimes\cdots\otimes n_{\ell}}$ with dimension $m$. Note that $2\sqrt{3}\cdot\mathbf{1}_{\left[-\frac{1}{2},\frac{1}{2}\right]^{n_j}}$ is isotropic log-concave. And $K_r=\left\{X\in\mathbb{R}^{n_1\otimes\cdots\otimes n_{\ell}}:\norm{\textbf{P}_FX}_2\leq r\right\}$ is a convex set in $\mathbb{R}^{n_1\otimes\cdots\otimes n_{\ell}}$. We apply \textsc{Theorem} \ref{1.4} for independent uniform distributions $2\sqrt{3}\cdot\mathbf{1}_{\left[-\frac{1}{2},\frac{1}{2}\right]^{n_j}}$, and we have
\begin{align*}
    \mathbb{P}\left(\norm{\Pi_F\otimes_{j=1}^{\ell}\left(2\sqrt{3}\cdot\mathbf{1}_{\left[-\frac{1}{2},\frac{1}{2}\right]^{n_j}}\right)}_2\leq\varepsilon\sqrt{m}\right)\leq\min\left\{m,{C'}^{\ell}\log\frac{1}{\varepsilon}\right\}\frac{\varepsilon}{(\ell-1)!}\left(C\log\frac{1}{\varepsilon}\right)^{\ell-1}.
\end{align*}
Notice that by \textsc{Corollary} \ref{3.4}, for every $M>0$,
$$\otimes_{j=1}^{\ell}M\left(X^{(j)}-z_j\right)\prec\otimes_{j=1}^{\ell}\mathbf{1}_{{\left[-\frac{1}{2},\frac{1}{2}\right]}^{n_j}}.$$ 
Hence we have for $0<\varepsilon<e^{-c{\ell}}$,
\begin{align*}
    &\mathbb{P}\left(\norm{\Pi_F\otimes_{j=1}^{\ell}\left(X^{(j)}-z_j\right)}_2\leq\frac{1}{\left(2\sqrt{3}M\right)^{\ell}}\varepsilon\sqrt{m}\right)\\
    \leq&\mathbb{P}\left(\norm{\Pi_F\otimes_{j=1}^{\ell}\cdot\mathbf{1}_{\left[-\frac{1}{2},\frac{1}{2}\right]^{n_j}}}_2\leq\frac{1}{\left(2\sqrt{3}\right)^{\ell}}\varepsilon\sqrt{m}\right)\\
    =&\mathbb{P}\left(\norm{\Pi_F\otimes_{j=1}^{\ell}\left(2\sqrt{3}\cdot\mathbf{1}_{\left[-\frac{1}{2},\frac{1}{2}\right]^{n_j}}\right)}_2\leq\varepsilon\sqrt{m}\right)\\
    \leq&\min\left\{m,{C'}^{\ell}\log\frac{1}{\varepsilon}\right\}\frac{\varepsilon}{(\ell-1)!}\left(C\log\frac{1}{\varepsilon}\right)^{\ell-1}.
\end{align*}

\end{proof}
Then we prove \textsc{Theorem} \ref{1.2}.
\begin{proof}
Notice that $2\sqrt{3}\cdot\mathbf{1}_{\left[-\frac{1}{2},\frac{1}{2}\right]^{n}}$ is isotropic log-concave, whose isotropic constant and Poincar\'e constant are bounded from above by universal constants. And notice that 
    $$\norm{\otimes_{j=1}^{\ell}2\sqrt{3}\cdot\mathbf{1}_{\left[-\frac{1}{2},\frac{1}{2}\right]^{n}}}_2=\prod_{j=1}^{\ell}\norm{2\sqrt{3}\cdot\mathbf{1}_{\left[-\frac{1}{2},\frac{1}{2}\right]^{n}}}_2.$$
Also notice that $2\sqrt{3}\cdot\mathbf{1}_{\left[-\frac{1}{2},\frac{1}{2}\right]^{n}}$ is sub-gaussian by Hoeffding inequality (see \textsc{Theorem} 2.2.2 in \cite{Vershynin_2018}). We apply \textsc{Theorem} \ref{1.5} and \textsc{Remark} \ref{subgaussian} for independent uniform distributions $2\sqrt{3}\cdot\mathbf{1}_{\left[-\frac{1}{2},\frac{1}{2}\right]^{n}}$. Then there exists a subset $\mathcal{S}_{\mathcal{F}}$ in $G_{n^{\ell},m}$ with Haar measure at least $1-e^{-c\max\left\{m,n\right\}}$, for every subspace $F\in\mathcal{S}_{\mathcal{F}}$, we have for $0<\varepsilon<1$,
\begin{align*}
    &\mathbb{P}\left(\norm{\Pi_F\otimes_{j=1}^{\ell}\left(2\sqrt{3}\cdot\mathbf{1}_{\left[-\frac{1}{2},\frac{1}{2}\right]^{n}}\right)}_2\leq\varepsilon\sqrt{m}\right)\leq(C\varepsilon)^{C'\min\left\{m,n\right\}}+e^{-C''n}.
\end{align*}
Let $X^{(1)},\cdots,X^{(\ell)}\in\mathbb{R}^n$ be independent random vectors with independent coordinates whose densities have uniform norms bounded by $M$. Let $z_1,\cdots,z_{\ell}\in\mathbb{R}^{n}$ be arbitrary vectors and let $m\leq n^{\ell}$. Notice again by \textsc{Corollary} \ref{3.4}, for every $M>0$,
$$\otimes_{j=1}^{\ell}M\left(X^{(j)}-z_j\right)\prec\otimes_{j=1}^{\ell}\mathbf{1}_{{\left[-\frac{1}{2},\frac{1}{2}\right]}^{n_j}}.$$ 
Hence we have
\begin{align*}
    &\mathbb{P}\left(\norm{\Pi_F\otimes_{j=1}^{\ell}\left(X^{(j)}-z_j\right)}_2\leq\frac{1}{\left(2\sqrt{3}M\right)^{\ell}}\varepsilon\sqrt{m}\right)\\
    \leq&\mathbb{P}\left(\norm{\Pi_F\otimes_{j=1}^{\ell}\left(2\sqrt{3}\cdot\mathbf{1}_{\left[-\frac{1}{2},\frac{1}{2}\right]^{n}}\right)}_2\leq\varepsilon\sqrt{m}\right)\\
    \leq&(C\varepsilon)^{C'\min\left\{m,n\right\}}+e^{-C''n}.
\end{align*}
\end{proof}

\section{Application}
Suppose we want to retrieve the component vectors with Gaussian perturbation. We first introduce the following lemma. This has been essentially proved in \cite{invertibility}. For completeness we provide a proof.

\begin{lemma}\label{smin}
    Let $X$ be an $r\times d$ matrix with $rank(X)=r$. Let $v_1^T,\cdots,v_r^T\in\mathbb{R}^d$ be the row vectors of $X$ and let 
        $$V_i=span\left\{v_j: 1\leq j\leq r, j\neq i\right\},\quad 1\leq i\leq r.$$
    Then
        $$\norm{X^{-1}}_{HS}^2=\sum_{\tau\subset[r],|\tau|=r-1}\frac{\det{\left(X_{\tau}\left(X_{\tau}^T\right)\right)}}{\det{\left(XX^T\right)}}=\sum_{i=1}^d\frac{1}{\norm{\Pi_{V_i^{\perp}}v_i}_2^2},$$ 
    where $X^{-1}$ is the  Moore-Penrose inverse. 
\end{lemma}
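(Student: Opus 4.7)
The plan is to reduce both equalities to elementary identities about the Gram matrix $XX^{T}$ and the Gram-determinant interpretation of volume. Since $X$ is $r\times d$ with full row rank $r$ (so necessarily $r\leq d$), the Moore--Penrose pseudoinverse admits the explicit form $X^{+}=X^{T}(XX^{T})^{-1}$. Using the cyclic property of the trace, one immediately obtains
\begin{equation*}
\norm{X^{+}}_{HS}^{2}=\operatorname{tr}\bigl((X^{+})^{T}X^{+}\bigr)=\operatorname{tr}\bigl((XX^{T})^{-1}XX^{T}(XX^{T})^{-1}\bigr)=\operatorname{tr}\bigl((XX^{T})^{-1}\bigr).
\end{equation*}

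For the first claimed equality, I would apply the cofactor formula to $(XX^{T})^{-1}$: its $(i,i)$-entry equals $\det(M_{i})/\det(XX^{T})$, where $M_{i}$ is the $(r-1)\times(r-1)$ principal submatrix of $XX^{T}$ obtained by deleting row $i$ and column $i$. But $M_{i}$ is exactly the Gram matrix $X_{\tau}X_{\tau}^{T}$ corresponding to the row-subset $\tau=[r]\setminus\{i\}$. Summing the diagonal yields
\begin{equation*}
\operatorname{tr}\bigl((XX^{T})^{-1}\bigr)=\sum_{\tau\subset[r],\,|\tau|=r-1}\frac{\det(X_{\tau}X_{\tau}^{T})}{\det(XX^{T})},
\end{equation*}
which is the first identity.

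For the second equality, I would use the interpretation $\sqrt{\det(XX^{T})}=\operatorname{vol}_{r}(P)$, where $P$ is the parallelepiped spanned by $v_{1},\ldots,v_{r}$, combined with the "base times height" principle. Concretely, decomposing $v_{i}=w_{i}+u_{i}$ with $w_{i}\in V_{i}$ and $u_{i}=\Pi_{V_{i}^{\perp}}v_{i}$, the operation of replacing $v_{i}$ by $u_{i}$ amounts to subtracting from the $i$-th row of $X$ a linear combination of the others, which does not change $\det(XX^{T})$. After this replacement the Gram matrix is block-diagonal with blocks $X_{\tau}X_{\tau}^{T}$ (for $\tau=[r]\setminus\{i\}$) and the $1\times 1$ block $\norm{u_{i}}_{2}^{2}$, so its determinant equals $\det(X_{\tau}X_{\tau}^{T})\cdot\norm{\Pi_{V_{i}^{\perp}}v_{i}}_{2}^{2}$. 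Taking the ratio term by term and summing over $i$ completes the identification with $\sum_{i=1}^{r}\norm{\Pi_{V_{i}^{\perp}}v_{i}}_{2}^{-2}$.

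There is no substantive obstacle: the argument is pure linear algebra. The only care needed is to use the correct form of the pseudoinverse for rectangular matrices of full row rank (so that $XX^{+}=I_{r}$ and the trace computation is valid), and to be explicit about the row operation used in the base-times-height step so that the invariance of $\det(XX^{T})$ is transparent.
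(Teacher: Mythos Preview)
Your proposal is correct and follows essentially the same route as the paper: both compute $\norm{X^{+}}_{HS}^{2}=\operatorname{tr}\bigl((XX^{T})^{-1}\bigr)$ from the explicit pseudoinverse formula, identify the diagonal entries of $(XX^{T})^{-1}$ via cofactors as $\det(X_{\tau}X_{\tau}^{T})/\det(XX^{T})$, and then use the Gram-determinant/volume interpretation to get $\det(XX^{T})=\det(X_{\tau}X_{\tau}^{T})\cdot\norm{\Pi_{V_{i}^{\perp}}v_{i}}_{2}^{2}$. The only cosmetic difference is that the paper writes out the volume via a successive-heights product whereas you obtain the block-diagonal Gram matrix by a single row operation; the content is the same.
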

\begin{proof}
    Since $rank(X)=r$, $XX^T$ is invertible,
        $$X^{-1}=X^T\left(XX^T\right)^{-1}.$$
    Denote by $d_i$ the $i$-th entry on the diagonal of $XX^T$ and denote by $X_i$ the submatrix obtained by removing the $i$-th row of $X$ for $1\leq i\leq r$. Then
        $$d_i=\frac{\det{\left(X_iX_i^T\right)}}{\det{\left(XX^T\right)}}.$$
    Hence
        $$\norm{X^{-1}}_2^2=tr\left(\left(XX^T\right)^{-1}XX^T\left(XX^T\right)^{-1}\right)=tr\left(\left(XX^T\right)^{-1}\right)=\sum_{i=1}^r\frac{\det{\left(X_iX_i^T\right)}}{\det{\left(XX^T\right)}}.$$
    Denote by $[0,1]^r$ the $r$-dimensional unit cube and denote by 
        $$X[0,1]^r=\left\{\sum_{i=1}^ra_iv_i^T:0\leq a_1,\cdots,a_r\leq1\right\}$$ 
    the parallelepiped generated by $X$. For $1\leq i\leq r$ and $1\leq j\leq r$, define
        $$W_i^{(k)}=span\left\{v_j,1\leq j\leq k, j\neq i\right\}.$$
    Then for $1\leq i\leq r$ and $1\leq j\leq r$,
    \begin{align*}
        vol\left(X[0,1]^r\right)=&\det{\left(XX^T\right)}^{\frac{1}{2}}\\
        =&\norm{v_1}_2\norm{\Pi_{{W_{i}^{(1)}}^{\perp}}v_2}_2\cdots\norm{\Pi_{{W_{i}^{(i-2)}}^{\perp}}v_{i-1}}_2\norm{\Pi_{{W_{i}^{(i)}}^{\perp}}v_{i+1}}_2\cdots\norm{\Pi_{{W_{i}^{(r-1)}}^{\perp}}v_r}_2\norm{\Pi_{{W_{i}^{(r)}}^{\perp}}v_i}_2
    \end{align*}
    and
    \begin{align*}
        vol\left(X_i[0,1]^r\right)=&\det{\left(X_iX_i^T\right)}^{\frac{1}{2}}\\
        =&\norm{v_1}_2\norm{\Pi_{{W_{i}^{(1)}}}^{\perp}v_2}_2\cdots\norm{\Pi_{{W_{i}^{(i-2)}}^{\perp}}v_{i-1}}_2\norm{\Pi_{{W_{i}^{(i)}}^{\perp}}v_{i+1}}_2\cdots\norm{\Pi_{{W_{i}^{(r-1)}}^{\perp}}v_r}_2.
    \end{align*}
    Note that $W_{i}^{(r)}=V_{i}$, then
    $$\sum_{i=1}^r\frac{\det{\left(X_iX_i^T\right)}}{\det{\left(XX^T\right)}}=\sum_{i=1}^r\frac{1}{\norm{\Pi_{{W_{i}^{(r)}}^{\perp}}v_i}_2^2}=\sum_{i=1}^r\frac{1}{\norm{\Pi_{V_i^{\perp}}v_i}_2^2}.$$
\end{proof}

\begin{theorem}\label{application}
    Consider random vectors 
        $$\widetilde{X}_i^{(j)}=X_i^{(j)}+G_i^{(j)}\in\mathbb{R}^n, 1\leq i\leq r, 1\leq j\leq l$$
    where 
        $$\norm{X_i^{(j)}}_2\leq C,\quad G_i^{(j)}\sim N\left(0,\frac{\rho^2}{n}\mathbb{I}_n\right).$$
    Let $r\leq \frac{1}{2}n^{\ell}$. Define the $n^{\ell}\times r$ matrix $A$ where the $i-$th column of $A$ is the flattened vector of $\otimes_{j=1}^{\ell}\widetilde{X}_i^{(j)}$ for $1\leq i\leq r$. Then for $0<\epsilon<e^{-C\ell}$,
        $$\mathbb{P}\left(s_{\min}(A)\leq\sqrt{1-\frac{r}{n^{\ell}}}\left(c\rho\right)^{\ell}\varepsilon\right)\leq\frac{\varepsilon r}{(\ell-1)!}\left(C'\log\frac{1}{\varepsilon}\right)^{\ell}.$$
\end{theorem}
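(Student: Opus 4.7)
The plan is to combine Lemma~\ref{smin} with Theorem~\ref{1.1}. Applying Lemma~\ref{smin} to $A^T$ (which has rank $r$ almost surely, since the Gaussian perturbations make the columns linearly independent), one obtains $\|A^{-1}\|_{HS}^2 = \sum_{i=1}^r 1/d_i^2$, where $d_i := \|\Pi_{V_i^\perp} v_i\|_2$, $v_i$ is the $i$-th column of $A$ (the flattening of $\otimes_{j=1}^\ell \widetilde{X}_i^{(j)}$), and $V_i := \mathrm{span}\{v_j : j\neq i\}$. Since $s_{\min}(A)^{-2} \le \|A^{-1}\|_{HS}^2$, the event $\{s_{\min}(A)\le t\}$ forces $\sum_i 1/d_i^2 \ge 1/t^2$, which by pigeonhole yields some index $i$ with $d_i \le t\sqrt{r}$. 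A union bound together with the exchangeability of the columns gives
\[
\mathbb{P}\bigl(s_{\min}(A)\le t\bigr)\ \le\ r\,\mathbb{P}\bigl(d_1\le t\sqrt{r}\bigr).
\]

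To estimate the right-hand side, I would condition on $\{\widetilde{X}_i^{(j)}: i\neq 1\}$, so that $V_1^\perp$ becomes a deterministic subspace of dimension $m_1\ge n^\ell - r + 1\ge n^\ell/2$ (using the hypothesis $r\le n^\ell/2$). The conditional law of $v_1$ is then that of the simple tensor $\otimes_{j=1}^\ell \widetilde{X}_1^{(j)}$, each factor $\widetilde{X}_1^{(j)} = X_1^{(j)} + G_1^{(j)}$ having independent Gaussian coordinates with density uniformly bounded by $M_0 := \sqrt{n}/(\sqrt{2\pi}\,\rho)$. Apply Theorem~\ref{1.1} with $F = V_1^\perp$, $m = m_1$, $M = M_0$, and shifts $z_j = 0$: the threshold there is $\tfrac{1}{(CM_0)^\ell}\varepsilon'\sqrt{m_1} = (c_0\rho)^\ell \sqrt{m_1/n^\ell}\,\varepsilon' \ge (c_0\rho)^\ell\sqrt{1-r/n^\ell}\,\varepsilon'$, with $c_0 := \sqrt{2\pi}/C$. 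This is already exactly the geometric shape of the threshold appearing in the statement, which reveals that the $\sqrt{1-r/n^\ell}$ factor records the dimension deficit $m_1/n^\ell$ of $V_1^\perp$.

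Setting $(c_0\rho)^\ell\sqrt{1-r/n^\ell}\,\varepsilon' = t\sqrt{r}$ and writing $t = \sqrt{1-r/n^\ell}(c\rho)^\ell\varepsilon$ aligns the two scales, with $\varepsilon'$ proportional to $\sqrt{r}\,\varepsilon$ for a suitable choice of $c$. Since $m_1\asymp n^\ell$ dominates any admissible $(C')^\ell\log(1/\varepsilon')$ in the range $\varepsilon < e^{-C\ell}$, the minimum in Theorem~\ref{1.1} is realized by its logarithmic option, giving
\[
\mathbb{P}\bigl(d_1\le t\sqrt{r}\bigr)\ \le\ \frac{\varepsilon'}{(\ell-1)!}\bigl(K\log(1/\varepsilon')\bigr)^\ell.
\]
Multiplying by the union-bound factor $r$ and using the monotonicity $\log(1/\varepsilon')\le\log(1/\varepsilon)$ (the harmless $\sqrt{r}$-rescaling being reabsorbed into the constants $c$ and $C'$) produces the stated tail.

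The main technical obstacle is the careful bookkeeping of constants through the two simultaneous rescalings: converting the Gaussian variance $\rho^2/n$ into the density bound $M_0$, and matching the pigeonhole threshold $t\sqrt{r}$ to the $\sqrt{m_1}$-scaled threshold of Theorem~\ref{1.1}. Everything else, including the almost-sure full rank of $A$ (which justifies invoking Lemma~\ref{smin}) and the exchangeability used in the union bound, is standard; the restriction $\varepsilon < e^{-C\ell}$ is inherited directly from the applicability range of Theorem~\ref{1.1}.
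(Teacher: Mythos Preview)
Your pigeonhole-plus-union-bound step introduces an extra factor of $\sqrt{r}$ that you cannot legitimately absorb. Concretely: from $\sum_i 1/d_i^2 \ge 1/t^2$ you correctly deduce that some $d_i \le t\sqrt{r}$, and then the union bound gives
\[
\mathbb{P}\bigl(s_{\min}(A)\le t\bigr)\ \le\ \sum_{i=1}^r \mathbb{P}\bigl(d_i\le t\sqrt{r}\bigr).
\]
With $t=\sqrt{1-r/n^\ell}\,(c\rho)^\ell\varepsilon$, the threshold inside the probability is $t\sqrt{r}$, so your application of Theorem~\ref{1.1} yields a bound proportional to $\varepsilon' \asymp \sqrt{r}\,\varepsilon$ (times the logarithmic factors). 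Multiplying by the union-bound factor $r$ then gives a right-hand side of order $r^{3/2}\varepsilon\,(\log(1/\varepsilon))^\ell/(\ell-1)!$, not $r\varepsilon\,(\log(1/\varepsilon))^\ell/(\ell-1)!$ as stated. Your claim that ``the harmless $\sqrt{r}$-rescaling [can be] reabsorbed into the constants $c$ and $C'$'' is incorrect: $r$ is a free parameter (it can be as large as $n^\ell/2$), while $c$ and $C'$ are required to be universal.

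The paper avoids this loss by working with negative moments rather than pigeonhole. For $0<q<1$ one has
\[
\frac{1}{s_{\min}(A)^q}\ \le\ \Bigl(\sum_{i=1}^r \frac{1}{d_i^2}\Bigr)^{q/2}
= \Bigl(\sum_{i=1}^r \bigl(d_i^{-q}\bigr)^{2/q}\Bigr)^{q/2}
\ \le\ \sum_{i=1}^r \frac{1}{d_i^{q}},
\]
the last step being the inclusion $\ell^{2/q}\subset\ell^1$ since $2/q>1$. Taking expectations gives $\mathbb{E}\,s_{\min}^{-q}\le r\,\mathbb{E}\,d_1^{-q}$ (with a single factor of $r$), and the small-ball estimate from Theorem~\ref{1.1} is converted into a bound on $\mathbb{E}\,d_1^{-q}$ via Lemma~\ref{C1}. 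One then optimizes $q$ near $1$ (Remark~\ref{C2}) and applies Markov's inequality to recover the tail. This route is what produces the linear-in-$r$ dependence.
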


\begin{remark}
    The result is true if we replace the Gaussian random vectors $G_i^{(j)}$'s with any random vectors $Y_i^{(j)}$'s with independent coordinates whose densities are bounded by $\frac{C\sqrt{n}}{ \rho}$. Moreover, a similar statement is true if we replace $G_i^{(j)}$'s with centered log-concave random vectors $W_i^{(j)}$'s with covariance matrix $\frac{ \rho^{2}}{ n}\mathbb{I}_{n}$. We omit the details.
\end{remark}

\begin{proof}
    Let $v_i=\otimes_{j=1}^{\ell}\widetilde{X}_i^{(j)}$ for $1\leq i\leq r$ be simple random tensors. If we view $v_i$'s as flattened vectors, then $A^T$ is $r\times n^{\ell}$ matrix and $v_i$ has independent coordinates with densities bounded by $\frac{\sqrt{n}}{\sqrt{2\pi}\rho}$. Let 
        $$V_i=span\left\{v_j: 1\leq j\leq r, j\neq i\right\},\quad 1\leq i\leq r.$$
    Then $v_i$ is independent of $V_i$. Hence
        $$\norm{\left(A^T\right)^{-1}}_{HS}^2=\sum_{i=1}^r\frac{1}{s_i\left(A^T\right)^2}=\sum_{i=1}^r\frac{1}{\norm{\Pi_{V_i^{\perp}}v_i}_2^2},$$
    where $dim\left(V_i^{\perp}\right)=n^{\ell}-\lambda n^{\ell}+1$.
    Let $0\leq q\leq 1,$ then
    \begin{align*}
        \mathbb{E}\frac{1}{s_{\min}^q(A)}\leq&\mathbb{E}\left[\sum_{i=1}^r\frac{1}{\norm{\Pi_{V_i^{\perp}}v_i}_2^2}\right]^{\frac{q}{2}}=\mathbb{E}\left[\sum_{i=1}^r\left(\frac{1}{\norm{\Pi_{V_i^{\perp}}v_i}_2^q}\right)^{\frac{2}{q}}\right]^{\frac{q}{2}}\\
        \leq&\mathbb{E}\sum_{i=1}^r\frac{1}{\norm{\Pi_{V_i^{\perp}}v_i}_2^q}=\sum_{i=1}^r\mathbb{E}\frac{1}{\norm{\Pi_{V_i^{\perp}}v_i}_2^q}\\
        =&\sum_{i=1}^r\mathbb{E}_{v_j,j\neq i}\mathbb{E}_{v_i}\left[\left.\frac{1}{\norm{\Pi_{V_i^{\perp}}v_i}_2^q}\right| v_j,j\neq i\right],
    \end{align*}
    where the second line follows from the fact that $\mathbb{B}_p^r\supset\mathbb{B}_1^r$ for $p>1$. For $0<\epsilon<e^{-C\ell}$, by \textsc{Theorem} \ref{1.4},
    \begin{align*}
        &\mathbb{P}\left(\norm{\Pi_{V_i^{\perp}}v_i}_2\leq\left(\frac{\sqrt{2\pi}\rho}{2\sqrt{3}\sqrt{n}}\right)^{\ell}\varepsilon\sqrt{n^{\ell}-\lambda n^{\ell}+1}\right)\\
        =&\mathbb{P}\left(\norm{\Pi_{V_i^{\perp}}v_i}_2\leq\sqrt{1-\frac{r}{n^{\ell}}}\left(c\rho\right)^{\ell}\varepsilon\right)\\
        \leq&\frac{\varepsilon}{(\ell-1)!}\left(C'\log\frac{1}{\varepsilon}\right)^{\ell}.
    \end{align*}
    or for $0<\varepsilon<\frac{e^{-C\ell}}{\left(c\rho\right)^{\ell}\sqrt{1-\frac{r}{n^{\ell}}}}$,
    $$\mathbb{P}\left(\norm{\Pi_{V_i^{\perp}}v_i}_2\leq\varepsilon\right)\leq\frac{\varepsilon}{(\ell-1)!\left(c\rho\right)^{\ell}\sqrt{1-\frac{r}{n^{\ell}}}}\left(C''\log\frac{1}{\varepsilon}\right)^{\ell}.$$
    By \textsc{Lemma} \ref{C1},
    \begin{align*}
        \mathbb{E}_{v_j,j\neq i}\mathbb{E}_{v_i}\left[\left.\frac{1}{\norm{\Pi_{V_i^{\perp}}v_i}_2^q}\right| v_j,j\neq i\right]\leq\left(\frac{\ell^{\ell}}{(\ell-1)!\left(c\rho\right)^{\ell}\sqrt{1-\frac{r}{n^{\ell}}}}\right)^q\frac{1}{(1-q)^{q\left(\ell-1\right)+1}}.
    \end{align*}
    And by \textsc{Corollary} \ref{C2}, for $0<\varepsilon<\frac{e^{-C\ell}}{\left(c\rho\right)^{\ell}\sqrt{1-\frac{r}{n^{\ell}}}}$,
        $$\mathbb{P}\left(s_{\min}(A)\leq\epsilon\right)\leq\frac{\varepsilon r}{(\ell-1)!\left(c\rho\right)^{\ell}\sqrt{1-\frac{r}{n^{\ell}}}}\left(C''\log\frac{1}{\varepsilon}\right)^{\ell},$$
    or for $0<\epsilon<e^{-C\ell}$,
        $$\mathbb{P}\left(s_{\min}(A)\leq\sqrt{1-\frac{r}{n^{\ell}}}\left(c\rho\right)^{\ell}\varepsilon\right)\leq\frac{\varepsilon r}{(\ell-1)!}\left(C'\log\frac{1}{\varepsilon}\right)^{\ell}.$$        
\end{proof}

Recall the definition of Kahtri-Rao product. Let $U=\left(u_1,\cdots,u_r\right)$ be an $m\times r$ matrix and let $V=\left(v_1,\cdots,v_r\right)$ be an $n\times r$ matrix, then the Kahtri-Rao product of $U$ and $V$ is an $mn\times r$ matrix $U\odot V$ whose $i$-th column is the flattened vector of the tensor $u_i\otimes v_i$.

There exists an efficient algorithm (see \cite{smoothed}, \cite{three}) to decompose an order-$3$ tensor
    $$X=\sum_{i=1}^rU_i\otimes V_i\otimes W_i.$$
using simultaneous diagonalization when the rank $r$ is no more than the dimension $n$. For tensor with order more than $3$, we can write
    $$\widetilde{X}=\sum_{i=1}^r\otimes_{j=1}^{\ell}\widetilde{X}_i^{(j)}=\sum_{i=1}^ru_i\otimes v_i\otimes w_k,$$
where
    $$u_i=\widetilde{X}^{(1)}\otimes\cdots\otimes \widetilde{X}^{\left(\left\lfloor\frac{\ell-1}{2}\right\rfloor\right)},$$
    $$v_i=\widetilde{X}^{\left(\left\lfloor\frac{\ell-1}{2}\right\rfloor+1\right)}\otimes\cdots\otimes \widetilde{X}^{(\ell-1)},$$
    $$w_i=\widetilde{X}^{(\ell)}_k.$$
Then the algorithm succeeds with high probability if the smallest singular values of $U=\left(u_1,\cdots,u_r\right)$, $V=\left(v_1,\cdots,v_r\right)$ and $W=\left(w_1,\cdots,w_r\right)$ are sufficiently large with high probability. And the running time of the algorithm also depends on the smallest singular values. We refer to Section 2 in \cite{smoothed} and Appendix A in \cite{tensordecomp} for details. Our result in \textsc{Theorem} \ref{application} improves the previous results since we conclude that the median of the singular value is of order $\mathcal{O}_{\ell}\left(\frac{ {\rho}^{\ell}(\log{er})^{\ell} }{ r}\right)$ where  $r$ is the rank. In comparison, the best known result due to \cite{tensordecomp}  is of order $\mathcal{O}_{\ell}\left(\frac{\rho^{\ell}}{n^{\ell}\sqrt{r}}\right)$. We want to emphasize that our result depends only on the rank and not on the dimension of the component vectors and allows $\rho$ (which measures how large the noise should be to guarantee that the algorithm works) to be significantly less. Our results combined with the known algorithms imply the following. 

\begin{corollary}
For $r\leq \frac{n^{\frac{\ell-1}{2}}}{2}$, suppose we are given $\widetilde{T}+E$ where
$\widetilde{T}$ and $\mathcal{F}$ are order $\ell$-tensors and $\widetilde{T}$ has rank $r$ and is obtained from the above smoothed analysis model. Moreover, for $0<\delta<e^{-C\ell}$ and $\varepsilon<1$, suppose the entries of E are at most $\varepsilon\cdot\text{poly}\left(\frac{\left(c\rho\right)^{\ell}\delta}{n^{\frac{\ell-1}{2}}},\frac{\rho}{n},\frac{1}{n^{\ell}}\right)$. Then there is an algorithm to recover the rank one terms of $\widetilde{T}$ up to an additive $\varepsilon$ error. The algorithm runs in time $\text{poly}\left(\frac{n^{\frac{\ell-1}{2}}}{\left(c\rho\right)^{\ell}\delta},\frac{n}{\rho},n^{\ell}\right)$ and succeeds with probability at least $1-C_{\ell}\delta r\left(\log{\frac{e}{\delta}}\right)^{\ell+1}$.
\end{corollary}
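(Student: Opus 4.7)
The plan is to reduce the order-$\ell$ decomposition to an order-$3$ problem by grouping modes, and then feed the result into a known robust simultaneous diagonalization algorithm whose running time and recovery error are both controlled by the minimum singular values of three Khatri--Rao products. Following the paragraph preceding the corollary, set
$$u_i := \bigotimes_{j=1}^{\lfloor(\ell-1)/2\rfloor} \widetilde{X}^{(j)}_i,\qquad v_i := \bigotimes_{j=\lfloor(\ell-1)/2\rfloor+1}^{\ell-1} \widetilde{X}^{(j)}_i,\qquad w_i := \widetilde{X}^{(\ell)}_i,$$
so that $\widetilde{T} = \sum_{i=1}^r u_i \otimes v_i \otimes w_i$ is an order-$3$ decomposition, and let $U,V,W$ be the matrices whose columns are the flattened $u_i$'s, $v_i$'s, and $w_i$'s respectively. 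The algorithms of \cite{smoothed, tensordecomp, three} then recover the rank-one factors of $\widetilde T$ up to additive error $\varepsilon$, in time polynomial in $1/s_{\min}(U)$, $1/s_{\min}(V)$, $1/s_{\min}(W)$ and in the dimensions, provided $\|E\|$ is small compared to the product of these singular values.

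The second step is to lower bound the three minimum singular values using \textsc{Theorem} \ref{application}. Each column of $U$ is an order-$\lfloor(\ell-1)/2\rfloor$ simple random tensor of the smoothed-analysis form considered in that theorem, and similarly for $V$ (order $\lceil(\ell-1)/2\rceil$) and $W$ (order $1$). The rank hypothesis $r \leq n^{(\ell-1)/2}/2$ ensures $r \leq n^{k}/2$ for the relevant order $k$ in each case. Applying \textsc{Theorem} \ref{application} with parameter $\delta$ to each factor yields
$$s_{\min}(U) \gtrsim (c\rho)^{\lfloor(\ell-1)/2\rfloor}\delta,\qquad s_{\min}(V) \gtrsim (c\rho)^{\lceil(\ell-1)/2\rceil}\delta,\qquad s_{\min}(W) \gtrsim c\rho\,\delta,$$
each failing with probability at most $\tfrac{C_{k}\delta r}{(k-1)!}(\log(1/\delta))^{k}$. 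Since the three exponents sum to $\ell$, the product of the three lower bounds is of order $(c\rho)^{\ell}\delta^3$, and a single $C_\ell$ absorbs all the $k$-dependent constants.

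A union bound over the three events gives a global failure probability at most $C_\ell \delta r (\log(e/\delta))^{\ell+1}$. On the complementary event, the quantitative perturbation analysis of simultaneous diagonalization (Section~2 of \cite{smoothed} and Appendix~A of \cite{tensordecomp}) shows that, as long as $\|E\|_\infty$ is at most $\varepsilon$ times a polynomial in $(c\rho)^\ell\delta/n^{(\ell-1)/2}$, $\rho/n$ and $n^{-\ell}$, we can recover each rank-one term up to additive error $\varepsilon$ in time polynomial in $n^{(\ell-1)/2}/((c\rho)^\ell\delta)$, $n/\rho$ and $n^\ell$. Plugging in the three lower bounds above matches exactly the polynomial dependencies stated in the corollary.

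The main obstacle I anticipate is bookkeeping rather than anything conceptual: one must verify that the noise tolerance and running-time polynomials prescribed by the off-the-shelf order-$3$ robust diagonalization routines align precisely with the expressions written in the corollary once our improved bounds $s_{\min}(U),s_{\min}(V),s_{\min}(W) \gtrsim (c\rho)^{k}\delta$ are inserted, and that the flattening step does not introduce additional $n^{\ell}$ factors that overwhelm the gain. The application of \textsc{Theorem} \ref{application} to each factor is otherwise a direct substitution.
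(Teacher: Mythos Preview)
Your proposal follows exactly the route the paper indicates: the paper does not give a standalone proof of this corollary but simply states that it follows by combining \textsc{Theorem}~\ref{application} with the existing robust order-$3$ decomposition algorithms of \cite{smoothed} and \cite{tensordecomp}, after the mode-grouping reduction described in the paragraph preceding the corollary. Your sketch is the correct unpacking of that sentence.

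One small point worth flagging in your bookkeeping: the factor $W$ is an $n\times r$ matrix with $r$ allowed up to $n^{(\ell-1)/2}/2$, so for $\ell\ge 4$ one can have $r>n$ and \textsc{Theorem}~\ref{application} (which needs $r\le \tfrac{1}{2}n^{k}$ for the relevant order $k$) does not directly control $s_{\min}(W)$ as you wrote. In the simultaneous-diagonalization framework the requirement on the third factor is weaker than full column rank (pairwise quantitative separation of columns suffices), and this is what the $\rho/n$ term in the stated polynomials reflects; the same caveat applies to $U$ when $\ell$ is even, since $\lfloor(\ell-1)/2\rfloor<(\ell-1)/2$. These are precisely the alignment checks you already anticipated, not a different strategy.
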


\bibliographystyle{abbrv}
\bibliography{ref}
\newpage

\appendix
\section{}\label{A}
We construct an example to show the sharpness of the estimate in \textsc{Theorem} \ref{1.2}

\begin{lemma}\label{A1}
    Let $X_1,\cdots,X_{\ell}$ be independent uniform random variables on $\left[-1,1\right]$. Let $Z_{\ell}=X_1X_2\cdots X_{\ell}$, then its distribution satisfies
        $$F_{Z_{\ell}}(z)=\frac{1}{2}+\frac{z}{2}\sum_{j=0}^{\ell}\frac{\left(\log{\frac{1}{z}}\right)^j}{j!}.$$
\end{lemma}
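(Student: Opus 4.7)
The plan is to reduce to the absolute value of the product by the sign-symmetry of the $X_i$'s, and then compute the distribution of a product of i.i.d.\ uniforms on $[0,1]$ via a logarithmic change of variables. Since $-X_1\stackrel{d}{=}X_1$, the random variable $Z_\ell$ is symmetric about the origin, so for $z\in(0,1]$ one has $F_{Z_\ell}(z)=\tfrac12+\tfrac12\mathbb{P}(|Z_\ell|\le z)$, while for $z\in[-1,0)$ the value is recovered from $F_{Z_\ell}(z)=1-F_{Z_\ell}(-z)$. Writing $U_i:=|X_i|$, which are i.i.d.\ uniform on $[0,1]$, the task reduces to evaluating $\mathbb{P}(U_1\cdots U_\ell\le z)$ for $z\in(0,1]$.

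My preferred route is via the logarithm. Setting $\xi_i:=\log(1/U_i)$, a one-line change of variables shows each $\xi_i\sim\mathrm{Exp}(1)$, hence $\log(1/(U_1\cdots U_\ell))=\sum_{i=1}^{\ell}\xi_i$ is $\Gamma(\ell,1)$-distributed (Erlang). The classical survival-function identity
$$\mathbb{P}\!\left(\sum_{i=1}^{\ell}\xi_i\ge t\right)=e^{-t}\sum_{j=0}^{\ell-1}\frac{t^j}{j!}\qquad(t\ge 0),$$
which is the Poisson--Gamma duality and can be proved by repeated integration by parts of $\int_t^{\infty} x^{\ell-1}e^{-x}/(\ell-1)!\,dx$, applied at $t=\log(1/z)$ yields
$$\mathbb{P}(U_1\cdots U_\ell\le z)\;=\;z\sum_{j=0}^{\ell-1}\frac{(\log(1/z))^j}{j!},$$
and substituting back into the symmetry reduction produces the claimed form of $F_{Z_\ell}(z)$ (with the summation naturally running to $\ell-1$, as is readily cross-checked from the base case $F_{Z_1}(z)=(1+z)/2$).

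Alternatively, one may argue directly by induction on $\ell$ by conditioning on the first coordinate: for $z\in(0,1]$,
$$\mathbb{P}(U_1\cdots U_\ell\le z)=\int_0^z\!du_1+\int_z^1\mathbb{P}(U_2\cdots U_\ell\le z/u_1)\,du_1,$$
so the inductive step boils down to the elementary integral $\int_z^1(1/u)(\log(u/z))^j\,du=(\log(1/z))^{j+1}/(j+1)$, which shifts the summation index by one; the base case $\ell=1$ is just $\mathbb{P}(U_1\le z)=z$. There is no real obstacle in either approach; the only point of care is tracking the upper limit of summation cleanly across the induction (or, equivalently, correctly identifying the Erlang survival function).
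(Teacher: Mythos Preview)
Your proof is correct and cleaner than the paper's. The paper proceeds by direct induction on $\ell$ working with the \emph{signed} uniforms on $[-1,1]$: it computes $F_{Z_2}$ by conditioning on $X_2$ and splitting the double integral over the four sign regions, then assumes the formula for $F_{Z_k}$ and conditions on $X_{k+1}$, again splitting according to signs and evaluating the resulting integrals by hand. By contrast, you first exploit the sign symmetry to reduce to $|Z_\ell|=U_1\cdots U_\ell$ with $U_i$ uniform on $[0,1]$, and then either (i) recognize $-\log U_i\sim\mathrm{Exp}(1)$ so the log of the product is $\Gamma(\ell,1)$ and invoke the Erlang survival identity, or (ii) run the induction on the unsigned product, where the inductive step is the single integral $\int_z^1 u^{-1}(\log(u/z))^j\,du=(\log(1/z))^{j+1}/(j+1)$. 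Your approach avoids the sign bookkeeping entirely and makes the structure of the answer transparent via the Poisson--Gamma connection; the paper's route is more computational but self-contained. You also correctly note that the upper index of summation in the stated formula should be $\ell-1$ rather than $\ell$, consistent with the paper's own inductive hypothesis and the base case $F_{Z_1}(z)=(1+z)/2$.
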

\begin{proof}
First consider $Z_2=X_1X_2$. Suppose $0<z\leq1$, then
\begin{align*}
    \mathbb{P}\left(Z_2\leq z\right)=&\mathbb{E}_{X_2}\mathbb{P}_{X_1}\left(X_1X_2\leq z\mid X_2\right)\\
    =&\int_0^1\frac{1}{2}\dd y\int_{-1}^{\min\{\frac{z}{y},1\}}\frac{1}{2}\dd x+\int_{-1}^0\frac{1}{2}\dd y\int^1_{\max\{\frac{z}{y},-1\}}\frac{1}{2}\dd x\\
    =&\frac{1}{4}\left(\int_0^1\left(\min\{\frac{z}{y},1\}+1\right)\dd y+\int_{-1}^0\left(1-\max\{\frac{z}{y},-1\}\right)\dd y\right)\\
    =&\frac{1}{4}\left(2+\int_0^z\dd x+\int_z^1\frac{z}{x}\dd x+\int_{-1}^{-z}-\frac{z}{x}\dd x+\int_{-z}^0\dd x\right)\\
    =&\frac{1}{2}+\frac{z}{2}-\frac{z}{2}\log{z}.
\end{align*}
If $-1\leq z<0$, then by symmetry of uniform distribution,
\begin{align*}
    \mathbb{P}\left(Z_2\leq z\right)=&1-\mathbb{P}\left(Z_2> z\right)\\
    =&1-\mathbb{P}\left(Z_2<-z\right)\\
    =&1-\left(\frac{1}{2}+\frac{z}{2}+\frac{z}{2}\log{\left(-x\right)}\right)\\
    =&\frac{1}{2}+\frac{z}{2}-\frac{z}{2}\log{\left(-z\right)}.
\end{align*}
Hence for $-1\leq z\leq 1$, the distribution function of $Z_2$ is
$$F_{Z_2}(z)=\frac{1}{2}+\frac{z}{2}-\frac{z}{2}\log{|z|}.$$
Now we assume that for $k\geq2$ and $Z_k=X_1X_2\cdots X_k$, its distribution function is
$$F_{Z_k}(z)=\frac{1}{2}+\frac{z}{2}\sum_{j=0}^{k-1}\frac{\left(\log{\frac{1}{|z|}}\right)^j}{j!},$$
and denote its density fuction by $f_{Z_k}$.
Then for $Z_{k+1}=X_1X_2\cdots X_{k+1}$, suppose $0<z\leq1$,
\begin{align*}
    \mathbb{P}\left(Z_{k+1}\leq z\right)=&\mathbb{E}_{X_{k+1}}\mathbb{P}_{Z_k}\left(Z_kX_{k+1}\leq z\mid X_{k+1}\right)\\
    =&\int_0^1\frac{1}{2}\dd y\int_{-1}^{\min\{\frac{z}{y},1\}}f_{Z_k}(x)\dd x+\int_{-1}^0\frac{1}{2}\dd y\int^1_{\max\{\frac{z}{y},-1\}}f_{Z_k}(x)\dd x\\
    =&\frac{1}{2}\left(\int_0^1F_{Z_k}\left(\min\left\{\frac{z}{y},1\right\}\right)\dd y+\int_{-1}^0\left(1-F_{Z_k}\left(\max\left\{\frac{z}{y},-1\right\}\right)\right)\dd y\right)\\
    =&\frac{1}{2}\left(\int_0^z\dd y+\int_z^1F_{Z_k}\left(\frac{z}{y}\right)\dd y+\int_{-1}^{-z}\left(1-F_{Z_k}\left(\frac{z}{y}\right)\right)\dd y+\int_{-z}^0\dd y\right)\\
    =&\frac{1}{2}\left(2z+\int_z^1\left(\frac{1}{2}+\frac{z}{2y}\sum_{j=0}^{k-1}\frac{\left(\log{\left(\frac{y}{z}\right)}\right)^j}{j!}\right)\dd y+\int_{-1}^{-z}\left(\frac{1}{2}-\frac{z}{2y}\sum_{j=0}^{k-1}\frac{\left(\log{\left(-\frac{y}{z}\right)}\right)^j}{j!}\right)\dd y\right)\\
    =&\frac{1}{2}\left(2z+z\int_1^{\frac{1}{z}}\left(\frac{1}{2}+\frac{1}{2u}\sum_{j=0}^{k-1}\frac{\left(\log{u}\right)^j}{j!}\right)\dd u+z\int_{-\frac{1}{z}}^{-1}\left(\frac{1}{2}+\frac{1}{2u}\sum_{j=0}^{k-1}\frac{\left(\log{\left(-u\right)}\right)^j}{j!}\right)\dd u\right)\\
    =&z+\left.\frac{z}{2}\left(\frac{u}{2}+\frac{1}{2}\sum_{j=1}^{k}\frac{\left(\log{u}\right)^j}{j!}\right)\right|_{u=1}^{u=\frac{1}{z}}+\left.\frac{z}{2}\left(\frac{u}{2}-\frac{1}{2}\sum_{j=1}^{k}\frac{\left(\log{\left(-u\right)}\right)^j}{j!}\right)\right|_{u=-\frac{1}{z}}^{u=-1}\\
    =&\frac{1}{2}+\frac{z}{2}\sum_{j=0}^{k}\frac{\left(\log{\frac{1}{z}}\right)^j}{j!}
\end{align*}
\end{proof}
\begin{lemma}\label{A2}
    Let $X^{(1)},\cdots,X^{(\ell)}\in\mathbb{R}^n$ be independent uniform random vectors on $\left[-\sqrt{3},\sqrt{3}\right]^n$, where $X^{(k)}=\left(X_1^{(k)},\cdots,X_n^{(k)}\right)$. Then
        $$\mathbb{P}\left(\sum_{i=1}^m{X_i^{(1)}}^2{X_1^{(2)}}^2\cdots {X_1^{(\ell)}}^2\leq\varepsilon^2 m\right)\geq\frac{C\varepsilon}{(\ell-2)!}\left(\log{\frac{1}{\varepsilon}}\right)^{\ell-2}.$$
\end{lemma}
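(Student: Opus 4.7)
\medskip

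\noindent\textbf{Proof proposal.} The main idea is that the quantity inside the probability factors completely, separating the $\ell-1$ dependence into a product of uniform scalars and the sum of $m$ squares into one isotropic factor. Writing
\[
    \sum_{i=1}^m (X_i^{(1)})^2 (X_1^{(2)})^2 \cdots (X_1^{(\ell)})^2 \;=\; \Bigl(\prod_{j=2}^{\ell} (X_1^{(j)})^2\Bigr)\cdot \Bigl(\sum_{i=1}^m (X_i^{(1)})^2\Bigr),
\]
I will introduce the independent quantities
$W := \prod_{j=2}^{\ell} X_1^{(j)}/\sqrt{3}$ and $S := \sum_{i=1}^m (X_i^{(1)})^2$, so that the event in question becomes $\{\,3^{\ell-1}\, W^{2}\, S \le \varepsilon^{2} m\,\}$.

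The plan is then to lower bound the two independent factors separately. For $S$, since the coordinates of $X^{(1)}$ are uniform on $[-\sqrt 3,\sqrt 3]$ with $\mathbb{E}[(X_i^{(1)})^2]=1$, Markov's inequality gives $\mathbb{P}(S\le 2m)\ge 1/2$. For $W$, which is a product of $\ell-1$ independent $U[-1,1]$ variables, I invoke Lemma \ref{A1} (used for the product of $\ell-1$ uniforms), together with the symmetry of the law of $W$, to obtain
\[
    \mathbb{P}(|W|\le z) \;=\; z\sum_{j=0}^{\ell-2}\frac{(\log(1/z))^{j}}{j!} \;\ge\; \frac{z\,(\log(1/z))^{\ell-2}}{(\ell-2)!}
\]
for every $z\in(0,1]$, simply by keeping the top term in the sum.

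To combine the two, I use independence of $W$ and $S$ to write
\[
    \mathbb{P}\bigl(3^{\ell-1}\,W^{2}\,S\le \varepsilon^{2} m\bigr) \;\ge\; \mathbb{P}\!\left(|W|\le \tfrac{\varepsilon}{\sqrt{2}\cdot 3^{(\ell-1)/2}}\right)\cdot \mathbb{P}(S\le 2m),
\]
and plug the two estimates above. Taking $z=\varepsilon/(\sqrt{2}\cdot 3^{(\ell-1)/2})\le 1$, which holds for every $\varepsilon$ of interest, and using $\log(1/z)\ge \log(1/\varepsilon)$ (since $\sqrt{2}\cdot 3^{(\ell-1)/2}\ge 1$), I arrive at
\[
    \mathbb{P}\!\left(\sum_{i=1}^m (X_i^{(1)})^2(X_1^{(2)})^2\cdots(X_1^{(\ell)})^2 \le \varepsilon^{2} m\right) \;\ge\; \frac{\varepsilon\,(\log(1/\varepsilon))^{\ell-2}}{2\sqrt{2}\cdot 3^{(\ell-1)/2}\,(\ell-2)!},
\]
which is exactly the claimed bound with the (dimensional) constant $C$ accounting for the $\ell$-dependent prefactor. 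There is no serious obstacle here: the only slightly delicate point is checking that the truncation $z\le 1$ is consistent with the allowed range of $\varepsilon$, and that the $\log(1/z)$ can be safely replaced by $\log(1/\varepsilon)$ up to absorbing constants into $C$. The content of the argument really is just the factorization followed by Lemma \ref{A1}.
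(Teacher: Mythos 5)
Your proof is correct, and it follows the same basic strategy as the paper (factor out the scalar product $X_1^{(2)}\cdots X_1^{(\ell)}$ and feed it into Lemma \ref{A1}), but it handles the factor $S=\sum_{i=1}^m (X_i^{(1)})^2$ quite differently. The paper conditions on $X^{(1)}$, integrates the exact CDF from Lemma \ref{A1} over the event that $\|Y\|_2=S^{1/2}$ lies in a fixed window around $\sqrt m$, and justifies that window via sub-gaussian concentration of the norm and Borell's lemma. You instead observe that $W$ and $S$ are independent, that $\{|W|\le \varepsilon/(\sqrt2\cdot 3^{(\ell-1)/2})\}\cap\{S\le 2m\}$ is contained in the target event, and that $\mathbb{P}(S\le 2m)\ge 1/2$ by Markov. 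This is strictly more elementary and, as a bonus, sidesteps a soft spot in the paper's version: the window $[\tfrac{1}{\sqrt3}\sqrt m,(\tfrac76-\tfrac1{\sqrt3})\sqrt m]$ used there does not actually contain $\mathbb{E}\|Y\|_2\approx\sqrt m$, so the claimed $1-e^{-cm}$ probability for it is off (it is a typo-level issue, fixable by recentering the window, but your route avoids it entirely). In fact you could simplify further: $(X_i^{(1)})^2\le 3$ deterministically, so $S\le 3m$ almost surely and even the Markov step is unnecessary.

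One point worth flagging is the constant. Your bound carries the prefactor $\bigl(2\sqrt2\cdot 3^{(\ell-1)/2}\bigr)^{-1}$, i.e.\ a constant decaying like $3^{-\ell/2}$, whereas the lemma is phrased with a single $C$. This $3^{-(\ell-1)/2}$ is the honest scaling factor coming from rescaling the $\ell-1$ uniforms on $[-\sqrt3,\sqrt3]$ to $[-1,1]$ before applying Lemma \ref{A1} and then keeping only the top term of the sum; the paper's displayed computation writes $\sqrt3$ where $3^{(\ell-1)/2}$ should appear, which is why its constant looks universal. Since the lemma is only used to certify that the upper bound $\frac{\varepsilon}{(\ell-1)!}(C\log\frac1\varepsilon)^{\ell-1}$ is sharp up to factors of the form $C^{\ell}$, your $\ell$-dependent prefactor is entirely adequate for the intended application, but you should state it explicitly rather than absorb it silently into a ``universal'' $C$. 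Also note that Lemma \ref{A1} as stated has the sum running to $\ell$; its proof actually yields the sum up to $\ell-1$ for a product of $\ell$ uniforms, and you correctly used the version with the sum up to $\ell-2$ for the product of $\ell-1$ uniforms.
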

\begin{proof}
Note that
\begin{align*}
   &\mathbb{P}\left(\sum_{i=1}^m{X_i^{(1)}}^2{X_1^{(2)}}^2\cdots {X_1^{(\ell)}}^2\leq\varepsilon^2 m\right)\\
   \geq&\mathbb{E}_{X^{(1)}}\mathbf{1}_{\{\sum_{i=1}^m{X_i^{(1)}}^2\leq\varepsilon^2m\}}\mathbb{P}_{X^{(2)},\cdots,X^{(\ell)}}\left(\left|{X_1^{(2)}}\cdots {X_1^{(\ell)}}\right|\leq\frac{\varepsilon\sqrt{m}}{\left(\sum_{i=1}^m{X_i^{(1)}}^2\right)^{1/2}}\right)\\
   =&\mathbb{E}_{X^{(1)}}\frac{\varepsilon\sqrt{m}}{\sqrt{3}\left(\sum_{i=1}^m{X_i^{(1)}}^2\right)^{1/2}}\sum_{j=0}^{\ell-2}\frac{\left(\log\dfrac{\sqrt{3}\left(\sum_{i=1}^m{X_i^{(1)}}^2\right)^{1/2}}{\varepsilon\sqrt{m}}\right)^j}{j!}\\
   \geq&\mathbb{E}_{X^{(1)}}\frac{\varepsilon\sqrt{m}}{\sqrt{3}\left(\sum_{i=1}^m{X_i^{(1)}}^2\right)^{1/2}}\frac{\left(\log\dfrac{\sqrt{3}\left(\sum_{i=1}^m{X_i^{(1)}}^2\right)^{1/2}}{\varepsilon\sqrt{m}}\right)^{\ell-2}}{(\ell-2)!}.
\end{align*}
Denote $Y=\left(X^{(1)}_1,\cdots,X^{(1)}_m\right)$. Then $Y$ is subgaussian and $\norm{Y}_2=\left(\sum_{i=1}^m{X_i^{(1)}}^2\right)^{1/2}$. By concentration of norms for subgaussian random vectors,
    $$\mathbb{P}\left(\left|\norm{Y}_2-\mathbb{E}\norm{Y}_2\right|\leq t\sqrt{m}\right)\geq 1-e^{-Ct^2m}.$$
Note that $\mathbb{E}\norm{Y}_2^2=m$. Since $Y$ is log-concave, then by Borell's lemma
    $$\frac{1}{6}\sqrt{m}\leq\mathbb{E}\norm{Y}_2\leq\sqrt{m}$$
Then 
\begin{align*}
   &\mathbb{P}\left(\sum_{i=1}^m{X_i^{(1)}}^2{X_1^{(2)}}^2\cdots {X_1^{(\ell)}}^2\leq\varepsilon^2 m\right)\\
   \geq&\mathbb{E}_{X^{(1)}}\mathbf{1}_{\left\{\frac{1}{\sqrt{3}}\sqrt{m}\leq\norm{Y}_2\leq\left(\frac{7}{6}-\frac{1}{\sqrt{3}}\right)\sqrt{m}\right\}}\frac{\varepsilon\sqrt{m}}{\sqrt{3}\norm{Y}_2}\frac{\left(\log\dfrac{\sqrt{3}\norm{Y}_2}{\varepsilon\sqrt{m}}\right)^{\ell-2}}{(\ell-2)!}\\
   \geq&\left(1-e^{-\left(1-\frac{1}{\sqrt{3}}\right)^2Cm}\right)\frac{\varepsilon}{\sqrt{3}\left(\frac{7}{6}-\frac{1}{\sqrt{3}}\right)(\ell-2)!}\left(\log{\frac{1}{\varepsilon}}\right)^{\ell-2}\\
   =&\frac{C\varepsilon}{(\ell-2)!}\left(\log{\frac{1}{\varepsilon}}\right)^{\ell-2}.
\end{align*}  
\end{proof}
Let $X_1,\cdots,X_{\ell}$ be independent uniform distributions on $[-\sqrt{3},\sqrt{3}]$ such that $\mathbb{E}[X_j]=0$ and $\mathbf{Var}[X_j]=1$ for $1\leq j\leq\ell$. Define $Z=\prod_{j=1}^{\ell}X_j$. By \textsc{Lemma} \ref{A1}, the cumulative distribution function of $Z$ is
    $$F_Z(z)=\frac{1}{2}+\frac{z}{2\sqrt{3}}\sum_{j=0}^{\ell-1}\frac{\left(\log\frac{\sqrt{3}}{|z|}\right)^j}{j!}, \quad -\sqrt{3}\leq z\leq \sqrt{3}.$$
Choose unit vector $f\in\mathbb{R}^{n_1\otimes\cdots\otimes n_{\ell}}$ such that
    $$\displaystyle{f_{i_1\cdots i_{\ell}} = \left\{\begin{array}{cc} 1 & \text{ if } i_1=\cdots=i_{\ell} \\[0.5em] 0 & \text{ otherwise } \end{array} \right. }$$
so that $\left\langle X^{(1)}\otimes\cdots\otimes X^{(\ell)},f\right\rangle=X_1^{(1)}\cdots X_1^{(\ell)}$. For $0<\varepsilon<1$,
    $$\mathbb{P}[|\left\langle X^{(1)}\otimes\cdots\otimes X^{(\ell)},f\right\rangle|\leq \varepsilon]=F(\varepsilon)-F(-\varepsilon)=\frac{\varepsilon}{\sqrt{3}}\sum_{j=0}^{\ell-1}\frac{\left(\log\frac{\sqrt{3}}{\varepsilon}\right)^j}{j!}\geq \frac{1}{\sqrt{3}\left(\ell-1\right)!}\varepsilon\left(\log{\frac{\sqrt{3}}{\varepsilon}}\right)^{\ell-1}.$$
Choose unit vectors $f^{k}\in\mathbb{R}^{n_1\otimes\cdots\otimes n_{\ell}}$ for $1\leq k\leq m\leq n$ such that 
    $$\displaystyle{f_{i_1\cdots i_{\ell}} = \left\{\begin{array}{cc} 1 & \text{ if } i_1=k, i_2=\cdots=i_{\ell}=1 \\[0.5em] 0 & \text{ otherwise } \end{array} \right. }.$$
Then by \textsc{Lemma} \ref{A2},
\begin{align*}
    &\mathbb{P}\left(\norm{\Pi_FX^{(1)}\otimes\dots\otimes X^{(\ell)}}_2\leq\varepsilon\sqrt{m}\right)\\
   =&\mathbb{P}\left(\sum_{i=1}^m{X_i^{(1)}}^2{X_1^{(2)}}^2\cdots {X_1^{(\ell)}}^2\leq\varepsilon^2 m\right)\\
   \geq&\frac{C\varepsilon}{(\ell-2)!}\left(\log{\frac{1}{\varepsilon}}\right)^{\ell-2}.
\end{align*}

\section{}\label{B}
\begin{lemma}\label{B1}
    Let $\xi$ be a non-negative random variable, let $\ell\geq2$ be an integer and let $C\geq1$ be a universal constant. Assume for $0<\varepsilon<c_0\leq1$,
        $$\mathbb{P}\left(\xi\leq\varepsilon\right)\leq C\varepsilon.$$
    Let $c=\min\left\{c_0,e^{-1}\right\}$, for $0<\varepsilon<c^{\ell}$,
        $$\mathbb{E}\mathbf{1}_{\left\{\xi>c^{1-\ell}\varepsilon\right\}}\frac{\varepsilon}{\xi}\left(\log{\frac{\xi}{\varepsilon}}\right)^{\ell-2}\leq C\varepsilon\left(\log{\frac{1}{\varepsilon}}\right)^{\ell-1},$$
\end{lemma}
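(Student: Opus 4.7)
The plan is to rewrite the quantity of interest via a Fubini-type identity, using that the weight $h(x) := \frac{\varepsilon}{x}(\log(x/\varepsilon))^{\ell-2}$ is decreasing on the domain of integration, and then bound a single one-dimensional integral. The key preliminary observation is that $h$ attains its maximum at $x^{\ast}=\varepsilon e^{\ell-2}$, while $a:=c^{1-\ell}\varepsilon\geq \varepsilon e^{\ell-1}>x^{\ast}$ since $c\leq e^{-1}$. Thus $h$ is decreasing on $[a,\infty)$ with $h(\infty)=0$, and a direct computation gives $-h'(x) = \frac{\varepsilon}{x^2}(\log(x/\varepsilon))^{\ell-3}[\log(x/\varepsilon) - (\ell-2)] \geq 0$ on this range.

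Given this, I would write $h(\xi)\mathbf{1}_{\xi>a}=\int_a^\infty(-h'(x))\mathbf{1}_{a<\xi\leq x}\,dx$ and apply Fubini to obtain
$$\mathbb{E}\bigl[h(\xi)\mathbf{1}_{\xi>a}\bigr] \;=\; \int_a^\infty (-h'(x))\,\mathbb{P}(a<\xi\leq x)\,dx.$$
Inserting the small-ball bound $\mathbb{P}(\xi\leq x)\leq\min(Cx,1)$ and using $-h'(x)\leq\frac{\varepsilon}{x^2}(\log(x/\varepsilon))^{\ell-2}$, I would split the remaining integral at $x=1/C$. (If instead $a>1/C$, the estimate is immediate from $I\leq h(a)\leq C\varepsilon(\log(1/\varepsilon))^{\ell-2}$, which uses $\varepsilon<c^{\ell}$ and $(\ell-1)\log(1/c)\leq\log(1/\varepsilon)$.)

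On $[a,1/C]$ the factor $\min(Cx,1)=Cx$ cancels one power of $x$, and the substitution $u=\log(x/\varepsilon)$ yields
$$C\varepsilon\int_{\log(a/\varepsilon)}^{\log(1/(C\varepsilon))} u^{\ell-2}\,du \;\leq\; \frac{C\varepsilon}{\ell-1}\Bigl(\log\tfrac{1}{\varepsilon}\Bigr)^{\ell-1}.$$
On $[1/C,\infty)$ the same substitution turns the integral into the incomplete-gamma tail $\int_L^\infty u^{\ell-2}e^{-u}\,du$ with $L=\log(1/(C\varepsilon))$.

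The main obstacle is controlling this gamma tail uniformly in $\ell$. I plan to use the bound $\int_L^\infty u^{\ell-2}e^{-u}\,du\leq e^{-L}\bigl(L^{\ell-2}+C^{\ell-2}(\ell-2)!\bigr)$, valid for $L\gtrsim \ell$, which is ensured by $\varepsilon<c^{\ell}$ since then $L\geq\log(1/\varepsilon)-\log C\geq \ell\log(1/c)-\log C$. Since $e^{-L}=C\varepsilon$, the first summand contributes $C\varepsilon(\log(1/\varepsilon))^{\ell-2}$; for the factorial summand the inequality $(\ell-2)!\leq \ell^{\ell-2}\leq(\log(1/\varepsilon))^{\ell-2}$ (valid because $\log(1/\varepsilon)\geq \ell$) produces the same order. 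Both pieces are therefore dominated by $C\varepsilon(\log(1/\varepsilon))^{\ell-1}$, and combining them with the low-range bound closes the argument.
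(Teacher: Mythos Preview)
Your approach is essentially the paper's: both use the layer-cake/Fubini identity for the monotone weight and then split the resulting one-dimensional integral. The paper works with the variable $t=\varepsilon/\xi$ and the function $g(t)=t(\log(1/t))^{\ell-2}$, which is exactly your $h(\xi)=g(\varepsilon/\xi)$, so the two computations are related by the substitution $x=\varepsilon/t$.

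Two points are worth tightening. First, your global bound $\mathbb{P}(\xi\le x)\le\min(Cx,1)$ is not justified on the interval $[c_0,1/C)$ when $Cc_0<1$: the hypothesis only gives $\mathbb{P}(\xi\le x)\le Cx$ for $x<c_0$. The paper avoids this by splitting at $x=c$ (equivalently $t=\varepsilon/c$), which lies in the range where the small-ball assumption applies; you should do the same.

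Second, the detour through the incomplete gamma tail is unnecessary. On the upper range you are integrating $(-h')\cdot 1$, and since $h(\infty)=0$ this is simply $h$ evaluated at the lower endpoint, i.e.\ $h(c)=\tfrac{\varepsilon}{c}\bigl(\log\tfrac{c}{\varepsilon}\bigr)^{\ell-2}$, which is already $\le C'\varepsilon(\log(1/\varepsilon))^{\ell-1}$. By first replacing $-h'(x)$ with the cruder $\tfrac{\varepsilon}{x^2}(\log(x/\varepsilon))^{\ell-2}$ you lose the exact antiderivative and are forced into the $\Gamma$-tail estimate, whose stated form $e^{-L}(L^{\ell-2}+C^{\ell-2}(\ell-2)!)$ is imprecise; the clean bound $(\ell-1)e^{-L}L^{\ell-2}$ for $L\ge\ell-2$ would do, but it is simpler to skip this entirely as the paper does.
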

\begin{proof}
    Define $g(t)=t\left(\log{\frac{1}{t}}\right)^{\ell-2}$, then $g(t)\geq0$ for $0<t<1$. Also
        $$g'(t)=\left(\log{\frac{1}{t}}\right)^{\ell-3}\left(\log{\frac{1}{t}}+2-l\right)\geq0$$
    for $0<\varepsilon<e^{2-\ell}$. By using L'Hospital's Rule $\ell-2$ times we have $\displaystyle{\lim_{t\to0}g(t)=0}$. Notice that $c^{\ell-1}\leq e^{1-\ell}<e^{2-\ell}$. Hence we can write
        $$g(T)\mathbf{1}_{\left\{0<T<c^{\ell-1}\right\}}=\int_0^{T}g'(t)\mathbf{1}_{\left\{0<t<c^{\ell-1}\right\}}\dd t=\int_0^{\infty}g'(t)\mathbf{1}_{\left\{0<t<c^{\ell-1}\right\}}\mathbf{1}_{\left\{T\geq t\right\}}\dd t,$$
    and by Fubini-Tonelli's theorem,
        $$\mathbb{E}\left[g(T)\mathbf{1}_{\left\{0<T<c^{\ell-1}\right\}}\right]=\int_0^{\infty}g'(t)\mathbf{1}_{\left\{0<t<c^{\ell-1}\right\}}\mathbb{P}(T\geq t)\dd t=\int_0^{c^{\ell-1}}g'(t)\mathbb{P}(T\geq t)\dd t$$
    where $T$ is a non-negative random variable. Then for $0<\varepsilon<c^{\ell}$,
    \begin{align*}
        &\mathbb{E}\mathbf{1}_{\left\{\xi>c^{1-\ell}\varepsilon\right\}}\frac{\varepsilon}{\xi}\left(\log{\frac{\xi}{\varepsilon}}\right)^{\ell-2}\\
        =&\mathbb{E}\mathbf{1}_{\left\{0<\frac{\varepsilon}{\xi}<c^{\ell-1}\right\}}g\left(\frac{\varepsilon}{\xi}\right)\\
        =&\int_0^{c^{\ell-1}}g'(t)\mathbb{P}\left(\frac{\varepsilon}{\xi}\geq t\right)dt\\
        =&\int_0^{\frac{\varepsilon}{c}}\left(\log{\frac{1}{t}}\right)^{\ell-3}\left(\log{\frac{1}{t}}+2-l\right)\mathbb{P}\left(\frac{\varepsilon}{\xi}\geq t\right)dt+\int_{\frac{\varepsilon}{c}}^{c^{\ell-1}}\left(\log{\frac{1}{t}}\right)^{\ell-3}\left(\log{\frac{1}{t}}+2-l\right)\mathbb{P}\left(\frac{\varepsilon}{\xi}\geq t\right)dt\\
        \leq&\int_0^{\frac{\varepsilon}{c}}\left(\log{\frac{1}{t}}\right)^{\ell-3}\left(\log{\frac{1}{t}}+2-l\right)dt+\int_{\frac{\varepsilon}{c}}^{c^{\ell-1}}\left(\log{\frac{1}{t}}\right)^{\ell-3}\left(\log{\frac{1}{t}}+2-l\right)\frac{C\varepsilon}{t}dt\\
        =&\left.t\left(\log{\frac{1}{t}}\right)^{\ell-2}\right|_0^{\frac{\varepsilon}{c}}+\left.C\varepsilon\left[\left(\log{\frac{1}{t}}\right)^{\ell-2}-\frac{1}{\ell-1}\left(\log{\frac{1}{t}}\right)^{\ell-1}\right]\right|_{\frac{\varepsilon}{c}}^{c^{\ell-1}}\\
        \leq&\frac{\varepsilon}{c}\left(\log{\frac{c}{\varepsilon}}\right)^{\ell-2}+C\varepsilon\left[\frac{1}{\ell-1}\left(\log{\frac{c}{\varepsilon}}\right)^{\ell-1}-\left(\log{\frac{c}{\varepsilon}}\right)^{\ell-2}\right]\\
        \leq&\frac{C\varepsilon}{\ell-1}\left(\log{\frac{1}{\varepsilon}}\right)^{\ell-1}.
    \end{align*}
\end{proof}

\section{}\label{C}
\begin{lemma}\label{C1}
    Let $\xi$ be non-negative random variable, let $\ell\geq2$ and let $K\leq\left(\frac{M}{\ell}\right)^{\ell}$ be a positive parameter for some positive universal constant $M\geq1$. If there exists $c>\frac{1}{M}$, such that for $0\leq\varepsilon\leq c^{\ell}$, we have
        $$\mathbb{P}\left(\xi\leq\varepsilon\right)\leq K\varepsilon\left(\log{\frac{1}{\varepsilon}}\right)^{\ell-1},$$
    then for $1-\frac{1}{M_0}\leq q<1$, where $M_0\geq\frac{3}{2}$ is a constant that only depends on $M$, 
    \begin{equation*}
        \mathbb{E}\frac{1}{\xi^q}\leq\frac{\left(K\ell^{\ell}\right)^q}{(1-q)^{q\left(\ell-1\right)+1}}.
    \end{equation*}
\end{lemma}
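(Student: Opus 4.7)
The natural starting point is the layer-cake identity for negative moments:
\[
\mathbb{E}\,\xi^{-q} \;=\; q\int_0^\infty \varepsilon^{-q-1}\,\mathbb{P}(\xi \leq \varepsilon)\,d\varepsilon,
\]
obtained by applying $\mathbb{E} Y = \int_0^\infty \mathbb{P}(Y>s)\,ds$ to $Y = \xi^{-q}$ and changing variables $s = \varepsilon^{-q}$. I would split the integral at the threshold $\varepsilon_0 := c^{\ell}$ where the hypothesis is available. The tail piece is easy: since $\mathbb{P}(\xi \leq \varepsilon) \leq 1$ everywhere,
\[
q\int_{c^\ell}^\infty \varepsilon^{-q-1}\,d\varepsilon \;=\; c^{-q\ell} \;\leq\; M^{q\ell},
\]
using $c > 1/M$.

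For the main piece, I would insert the hypothesis $\mathbb{P}(\xi \leq \varepsilon) \leq K\varepsilon(\log\tfrac{1}{\varepsilon})^{\ell-1}$, obtaining
\[
Kq\int_0^{c^\ell} \varepsilon^{-q}\Bigl(\log\tfrac{1}{\varepsilon}\Bigr)^{\ell-1} d\varepsilon,
\]
and then reduce to a Gamma integral via two substitutions. Setting $u = \log(1/\varepsilon)$ yields $Kq\int_{\ell\log(1/c)}^\infty u^{\ell-1} e^{-(1-q)u}\,du$, after which $v = (1-q)u$ gives
\[
\frac{Kq}{(1-q)^\ell}\int_{(1-q)\ell\log(1/c)}^\infty v^{\ell-1} e^{-v}\,dv \;\leq\; \frac{Kq\,\Gamma(\ell)}{(1-q)^\ell} \;=\; \frac{Kq\,(\ell-1)!}{(1-q)^\ell}.
\]
Combining the two pieces gives the preliminary estimate
\[
\mathbb{E}\,\xi^{-q} \;\leq\; \frac{Kq\,(\ell-1)!}{(1-q)^\ell} + M^{q\ell}.
\]

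The final step is to match the exotic target form $\frac{(K\ell^\ell)^q}{(1-q)^{q(\ell-1)+1}}$. Since $K\ell^\ell \leq M^\ell$ by hypothesis, both $M^{q\ell}$ and $(K\ell^\ell)^q$ are on the same scale. The exponent mismatch between $(1-q)^\ell$ and $(1-q)^{q(\ell-1)+1}$ is handled by the algebraic identity
\[
(1-q)^{q(\ell-1)+1 - \ell} \;=\; (1-q)^{-(1-q)(\ell-1)} \;=\; \exp\!\bigl((\ell-1)(1-q)\log\tfrac{1}{1-q}\bigr),
\]
and the function $t \mapsto t\log(1/t)$ can be made as small as we wish by restricting $t = 1-q$ to a sufficiently small interval $[0, 1/M_0]$ with $M_0 = M_0(M)$. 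Simultaneously, the Stirling-type bound $(\ell-1)! \leq \ell^{\ell-1}$ trades the factorial for a factor $1/\ell$ relative to $\ell^\ell$, which compensates for writing $K = (K\ell^\ell)^q \cdot K^{1-q}\ell^{-q\ell}$ and using $K^{1-q} \leq (M/\ell)^{\ell(1-q)}$.

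The main obstacle is precisely this last bookkeeping step: the integral computation itself is routine, but the target bound only agrees with the natural estimate $\frac{Kq(\ell-1)!}{(1-q)^\ell} + M^{q\ell}$ after a careful comparison that must hold uniformly in $\ell \geq 2$. This forces $M_0$ to be chosen large enough as a function of $M$ alone so that the terms $\ell^{(1-q)\ell}$, $(1-q)^{-(1-q)(\ell-1)}$ and $M^{\ell(1-q)}$ all remain under control when $1-q \leq 1/M_0$, after which both summands are each dominated by (a constant multiple of) $\frac{(K\ell^\ell)^q}{(1-q)^{q(\ell-1)+1}}$.
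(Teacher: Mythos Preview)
Your layer-cake setup and the Gamma-integral reduction are correct, and the substitution $v=(1-q)\log(1/\varepsilon)$ is in fact cleaner than what the paper does: the paper evaluates $\int_0^a t^{-q}(\log\tfrac1t)^{\ell-1}\,dt$ by $(\ell-1)$-fold integration by parts to obtain an explicit finite sum, and then bounds that sum case by case. Both routes start identically.

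The real difference is the split point. You split at the fixed threshold $c^\ell$; the paper keeps the split at a \emph{free} parameter $a$ and then optimizes. After bounding the integration-by-parts sum, the paper arrives at
\[
\mathbb{E}\,\xi^{-q}\ \le\ a^{-q}\Bigl(\frac{qK\ell^{\ell}a}{(1-q)^{\ell}}+1\Bigr),
\]
and plugging in the balancing choice $a=a_0:=\dfrac{(1-q)^{\ell-1}}{K\ell^{\ell}}$ yields exactly $\dfrac{(K\ell^\ell)^q}{(1-q)^{q(\ell-1)+1}}$. The constraint $q\ge 1-1/M_0$ in the paper is used only to verify $a_0\ge e^{-\ell/(1-q)}$, which is the regime in which the simpler of the two sum bounds applies.

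The gap in your argument is the sentence ``both $M^{q\ell}$ and $(K\ell^\ell)^q$ are on the same scale.'' The hypothesis gives only the \emph{upper} bound $K\le (M/\ell)^\ell$, hence $(K\ell^\ell)^q\le M^{q\ell}$, not the reverse. When $K$ is well below $(M/\ell)^\ell$ your tail term $c^{-q\ell}$ carries no $K$-dependence at all and cannot be absorbed into the target uniformly over the advertised range of $q$ with $M_0$ depending only on $M$. The remedy is exactly the paper's: move the split down to the $K$-dependent point $a_0$, so that the trivial tail estimate $a_0^{-q}$ already has the correct scaling in $K$.
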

\begin{proof}
    Note that for $0<a<c^{\ell}$,
    \begin{align*}
        \mathbb{E}\frac{1}{\xi^q}=&\int_0^{\infty}\mathbb{P}\left(\frac{1}{\xi^q}>u\right)\dd u\\
        =&q\int_0^{\infty}\frac{1}{t^{q+1}}\mathbb{P}\left(\xi<t\right)\dd t\\
        \leq&qK\int_0^a\frac{1}{t^q}\left(\log{\frac{1}{t}}\right)^{\ell-1}\dd t+q\int_a^{\infty}\frac{1}{t^{q+1}}\dd t\\
        =&qK\int_0^a\frac{1}{t^q}\left(\log{\frac{1}{t}}\right)^{\ell-1}\dd t+\frac{1}{a^q}
    \end{align*}
    Denote $\displaystyle I_p=\int_0^at^{-q}\left(\log{\frac{1}{t}}\right)^p\dd t$, then
        $$I_0=\int_0^at^{-q}\dd t=\frac{a^{1-q}}{1-q},$$
    and by integration by parts
    \begin{align*}
        I_p=&\frac{1}{1-q}\int_0^a\left(\log{\frac{1}{t}}\right)^p\dd t^{1-q}\\
        =&\frac{a^{1-q}}{1-q}\left(\log{\frac{1}{a}}\right)^p+\frac{p}{1-q}\int_0^at^{-q}\left(\log{\frac{1}{t}}\right)^{p-1}\dd t\\
        =&\frac{a^{1-q}}{1-q}\left(\log{\frac{1}{a}}\right)^p+\frac{p}{1-q}I_{p-1}.
    \end{align*}
    Hence
    \begin{align*}
        I_{\ell-1}=&\frac{a^{1-q}}{1-q}\left(\log{\frac{1}{a}}\right)^{\ell-1}+\frac{\ell-1}{1-q}I_{l-2}\\
        =&a^{1-q}\sum_{i=0}^{\ell-1}\frac{(\ell-1)!}{(\ell-1-i)!(1-q)^{i+1}}\left(\log{\frac{1}{a}}\right)^{\ell-1-i}.
    \end{align*}
    and
    \begin{align*}
        \mathbb{E}\frac{1}{\xi^q}\leq qKI_{\ell-1}+\frac{1}{a^q}=&a^{-q}\left(\frac{qKa}{1-q}\left(\log{\frac{1}{a}}\right)^{\ell-1}\sum_{i=0}^{\ell-1}\frac{(\ell-1)!}{(\ell-1-i)!}\left(\frac{1}{(1-q)\log{\frac{1}{a}}}\right)^i+1\right)\\
        \leq&a^{-q}\left(\frac{qKa}{1-q}\left(\log{\frac{1}{a}}\right)^{\ell-1}\sum_{i=0}^{\ell-1}\left(\frac{\ell}{(1-q)\log{\frac{1}{a}}}\right)^i+1\right)
    \end{align*}
    Note that $0<q<1$, and note that $\frac{\ell}{(1-q)\log{\frac{1}{a}}}=1$ when $a=e^{-\frac{\ell}{1-q}}$. Then
    \begin{equation*}
        \sum_{i=0}^{\ell-1}\left(\frac{\ell}{(1-q)\log{\frac{e}{a}}}\right)^i\leq
        \begin{cases}
            \ell^{\ell}\left(\frac{1}{\left(1-q\right)\log{\frac{1}{a}}}\right)^{\ell-1}&\text{if $a\geq e^{-\frac{\ell}{1-q}}$,}\\
            \ell&\text{if $a<e^{-\frac{\ell}{1-q}}$.}
        \end{cases}
    \end{equation*}
    Then
    \begin{equation*}
        \mathbb{E}\frac{1}{\xi^q}\leq
        \begin{cases}
            a^{-q}\left(\frac{qK\ell^{\ell}a}{(1-q)^{\ell}}+1\right)&\text{if $a\geq e^{-\frac{\ell}{1-q}}$,}\\
            a^{-q}\left[\frac{qK\ell a}{1-q}\left(\log{\frac{1}{a}}\right)^{\ell-1}+1\right]&\text{if $a<e^{-\frac{\ell}{1-q}}$,}\\
        \end{cases}
    \end{equation*}
    Take 
        $$f(a):=a^{-q}\left(\frac{qK\ell^{\ell}a}{(1-q)^{\ell}}+1\right)$$ 
    and take $a_0=\frac{(1-q)^{\ell-1}}{K\ell^{\ell}}$, then we claim that $a_0\geq e^{-\frac{\ell}{1-q}}$ if $q\geq1-\frac{1}{M_0}$ for some constant $M_0\geq\frac{3}{2}$. In fact, since $K\leq\left(\frac{M}{\ell}\right)^{\ell}$, there exists constant $M_0\geq\frac{3}{2}$ such that
        $$\left(K\ell^{\ell}\right)^{\frac{1}{\ell-1}}\leq\left(M^{\ell}\right)^{\frac{1}{\ell-1}}\leq M_0^{e-1}.$$
    If $q\geq1-\frac{1}{M_0}$, then
        $$\frac{1}{1-q}\geq M_0\geq\left(K\ell^{\ell}\right)^{\frac{1}{(e-1)(\ell-1)}},$$
        $$(e-1)(\ell-1)\log{\frac{1}{1-q}}\geq\log{\left(K\ell^{\ell}\right)}.$$
    Observe that
        $$\frac{1}{1-q}\geq e\log{\frac{1}{1-q}},$$
    therefore
    \begin{align*}
        \log{K\ell^{\ell}}\leq&(e-1)(\ell-1)\log{\frac{1}{1-q}}\\
        \leq&(\ell-1)\left(\frac{1}{1-q}-\log{\frac{1}{1-q}}\right)\\
        \leq&\frac{\ell}{1-q}-(\ell-1)\log{\frac{1}{1-q}},\\
        -\frac{\ell}{1-q}\leq&(\ell-1)\log{(1-q)}-\log{\left(K\ell^{\ell}\right)},\\
        e^{-\frac{\ell}{1-q}}\leq&\frac{(1-q)^{\ell-1}}{K\ell^{\ell}}=a_0.
    \end{align*}
    Hence 
        $$\mathbb{E}\frac{1}{\xi^q}\leq f\left(a_0\right)=\frac{\left(K\ell^{\ell}\right)^q}{(1-q)^{q\left(\ell-1\right)+1}}.$$   
\end{proof}
\begin{remark}\label{C2}
    \textsc{Lemma} \ref{C1} can be reversed for sufficiently small $\varepsilon$ at a price. Let $\xi$ be non-negative random variable, let $\ell\geq2$ and let $K\leq\left(\frac{M}{\ell}\right)^{\ell}$ be a positive parameter for some positive universal constant $M$ as assumed in \textsc{Lemma} \ref{C1}. If for $1-\frac{1}{M_0}\leq q<1$, where $M_0\geq\frac{3}{2}$ is a constant that only depends on $M$, we have
    \begin{equation*}
        \mathbb{E}\frac{1}{\xi^q}\leq\frac{\left(K\ell^{\ell}\right)^q}{(1-q)^{q\left(\ell-1\right)+1}},
    \end{equation*}
    then for $0<\varepsilon\leq\frac{1}{e^{\left(M_0-1\right)\ell-1}}$,
        $$\mathbb{P}\left(\xi\leq\varepsilon\right)\leq KC^{\ell}\varepsilon\left(\log{\frac{1}{\varepsilon}}\right)^{\ell}.$$ 
   
    Take $\varepsilon\leq\frac{1}{e^{\left(M_0-1\right)\ell-1}}$ and $q=1-\frac{\ell}{\ell+\log{\frac{1}{\varepsilon}}}$. Note that
        $$\frac{\ell}{\ell+\log{\frac{1}{\varepsilon}}}\leq\frac{\ell}{\ell+\left(M_0-1\right)\ell}=\frac{1}{M_0},$$
    then $1-\frac{1}{M_0}\leq q<1$. By Markov's inequality,
    \begin{align*}
        \mathbb{P}\left(\xi\leq\varepsilon\right)&\leq\varepsilon^q\frac{\left(K\ell^{\ell}\right)^q}{(1-q)^{q\left(\ell-1\right)+1}}\\
        &=\left(K\ell^{\ell}\right)^q\varepsilon^q\left(\frac{1}{1-q}\right)^{q(\ell-1)+1}\\
        &\leq K\ell^{\ell}\cdot\varepsilon\cdot\exp\left\{{\frac{\ell\log{\frac{1}{\varepsilon}}}{\ell+\log{\frac{1}{\varepsilon}}}}\right\}\left(\frac{\ell+\log{\frac{1}{\varepsilon}}}{\ell}\right)^{\ell}\\
        &\leq Ke^{\ell}\varepsilon\left(\ell+\log{\frac{1}{\varepsilon}}\right)^{\ell}.
    \end{align*}
    Note that $\log{\frac{1}{\varepsilon}}\geq\left(M_0-1\right)\ell$, then
        $$\mathbb{P}\left(\xi\leq\varepsilon\right)\leq K \left(\frac{eM_0}{M_0-1}\right)^{\ell}\varepsilon\left(\log{\frac{1}{\varepsilon}}\right)^{\ell}\leq K\left(3e\right)^{\ell}\varepsilon\left(\log{\frac{1}{\varepsilon}}\right)^{\ell}.$$
\end{remark}

\end{document}